\newcommand{\qnvr}{q_n^*(\theta)}
\newcommand{\nX}{\mathbf X_n}
\newcommand{\nZ}{ z_{1:n}}
\newcommand{\cQ}{\mathcal{Q}}
\newcommand{\scKL}{\textsc{KL}}
\newcommand{\argmin}{\text{argmin}}
\newcommand{\argmax}{\text{argmax}}
\newcommand{\klvb}{{KL-VB}}
\newcommand{\red}[1]{\textcolor{red}{#1}}
\newcommand{\blue}[1]{\textcolor{blue}{#1}}
\renewcommand{\red}[1]{\unskip} 
\renewcommand{\blue}[1]{\textcolor{black}{#1}}
\newcommand{\good}{good sequence}
\newcommand{\Good}{Good sequence}
\def\bb1{\mathbb{I}}
\def\sN{\mathcal N}
\def\sQ{\mathcal Q}
\def\sZ{\mathcal Z}
\def\bbE{\mathbb E}
\def\bbR{\mathbb R}
\def\bk1{{\rm I\kern-.60em 1}}
\def\bkE{{\rm I\kern-.15em E}}
\def\bkL{{\rm I\kern-.15em L}}
\def\bkM{{\rm I\kern-.20em M}}
\def\bkN{{\rm I\kern-.15em N}}
\def\bkO{{\rm \kern.24em
            \vrule width.05em height1.4ex depth-.05ex
            \kern-.26em O}}\def\bkP{{\rm I\kern-.17em P}}
\def\bkQ{{\rm \kern.24em
            \vrule width.05em height1.4ex depth-.05ex
            \kern-.26em Q}}
\def\bkR{{\rm I\kern-.15em R}}
\def\bkZ{{\rm \kern-.15em Z}}
\def\a{\alpha}
\def\b{\beta}
\def\e{\epsilon}
 \newtheorem{definition}{Definition}[section]
 \newtheorem{lemma}{Lemma}[section]
 \newtheorem{theorem}{Theorem}[section]
 \newtheorem{proposition}{Proposition}[section]
\newtheorem{assumption}{Assumption}
 \newtheorem{example}{Example}[section]
\title{Asymptotic Consistency of $\alpha$-R\'enyi-Approximate Posteriors}
\author{Prateek Jaiswal{$^\star$}, Vinayak A. Rao{$^\dag$} and Harsha Honnappa{$^\star$}}
\date{$^\star$\{jaiswalp,honnappa\}@purdue.edu School of Industrial Engineering, Purdue University\\$^\dag$\{varao@purdue.edu\} Department of Statistics, Purdue University}
\begin{document}

\maketitle

\begin{abstract}%
We study the asymptotic consistency properties of $\alpha$-R\'enyi approximate posteriors, a class of variational Bayesian methods that approximate an intractable Bayesian posterior with a member of a tractable family of distributions, the member chosen to minimize the $\alpha$-R\'enyi divergence from the true posterior.\ Unique to our work is that we consider settings with $\alpha > 1$, resulting in approximations that {upperbound} the log-likelihood, and consequently have wider spread than traditional variational approaches that minimize the Kullback-Liebler (KL) divergence from the posterior. 
Our primary result identifies sufficient conditions under which consistency holds, centering around the existence of a `good' sequence of distributions in the approximating family that possesses, among other properties, the right rate of convergence to a limit distribution. We further characterize the good sequence by demonstrating that a sequence of distributions that converges too quickly cannot be a good sequence. 
\blue{We also extend our analysis to the setting where $\alpha$ equals one, corresponding to the minimizer of the reverse KL divergence, and to models with local latent variables}.  
We also illustrate the existence of~\good~with a number of examples. 
Our results complement a growing body of work focused on the frequentist properties of variational Bayesian methods.\\
\textbf{Keywords:} $\alpha$-R\'enyi divergence, Asymptotic consistency, Bayesian computation, Variational inference  
\end{abstract}
 

\section{Introduction}
Bayesian statistics forms a powerful and flexible framework that allows practitioners to bring prior knowledge to statistical problems, and to coherently manage uncertainty resulting from finite and noisy datasets. A Bayesian represents the unknown state of the world with a possibly vector-valued parameter $\theta$, over which they place a prior probability $\pi(\theta)$, representing {\em a priori} beliefs they might have. $\theta$ can include global parameters shared across the entire dataset, as well as local variables specific to each observation. A likelihood $p(\nX|\theta)$ then specifies a probability distribution over the observed dataset $\nX$.  Given observations $\nX$, prior beliefs $\pi(\theta)$ are updated to a posterior distribution $\pi(\theta|\nX)$ calculated through Bayes' rule. 

While conceptually straightforward, computing $\pi(\theta|\nX)$ is intractable for many interesting and practical models, and the field of Bayesian computation is focused on developing scalable and accurate computational techniques to approximate the posterior distribution. Traditionally, much of this has involved Monte Carlo and Markov chain Monte Carlo techniques to construct sampling approximations to the posterior distribution. In recent years, developments from machine learning have sought to leverage tools from optimization to construct tractable posterior approximations. An early and still popular instance of this methodology is {\em variational Bayes} (VB)~\citep{BlKuMc2017}.

At a high level, the idea behind VB is to approximate the intractable posterior $\pi(\theta|\nX)$ with an element $q(\theta)$ of some simpler class of distributions $\cQ$. Examples of $\cQ$ include the family of Gaussian distributions, delta functions, or the family of factorized `mean-field' distributions that discard correlations between components of $\theta$. The variational solution $q$ is the element of $\cQ$ that is closest to $\pi(\theta|\nX)$, where closeness is measured in terms of the Kullback-Leibler (KL) divergence. Thus, $q$ is the solution to: 
\begin{eqnarray}
  q(\theta) = \text{argmin}_{\tilde{q} \in \mathcal{Q}} 
  \text{KL}(\tilde{q}(\theta)\|\pi(\theta|\nX)). \label{eq:vb_opt}
\end{eqnarray}
We term this as the \klvb\ method. From the non-negativity of the KL divergence, we can view this as \blue{maximizing} a lower-bound to the logarithm of the \blue{model \textit{evidence}} \red{marginal probability of the observations }, $\log p(\nX) = \log \left(\int p(\nX,\theta) \mathrm{d} \theta \right)$. This lower-bound, called the variational lower-bound or evidence lower bound (ELBO) is defined as 
\begin{align}
  \text{ELBO}(\tilde{q}(\theta)) = \log p(\nX) - \text{KL}(\tilde{q}(\theta)\|p(\theta|\nX)).\label{eq:vb_opt2}
\end{align}
Optimizing the two equations above with respect to $q$ does not involve either calculating expectations with respect to the intractable posterior $\pi(\theta|\nX)$, or evaluating the posterior normalization constant. As a consequence, a number of standard optimization algorithms can be used to select the best approximation $q(\theta)$ to the posterior distribution, examples including expectation-maximization~\citep{neal1998view} and gradient-based~\citep{kingma2013auto} methods. 
This has allowed the application of Bayesian methods to increasingly large datasets and high-dimensional settings. Despite their widespread popularity in the machine learning, and more recently, the statistics communities,  it is only recently that variational \blue{Bayesian} methods have been studied theoretically~\citep{alquier2017concentration,abdellatif2018,WaBl2017,yang2017alpha, ZhGa2017}. 

  \subsection{R\'enyi Divergence Minimization}
  Despite its popularity, variational Bayes has a number of well-documented limitations. An important one is its tendency to produce approximations that underestimate the spread of the posterior distribution~\citep{TuSh2011a,LiTu2016}: in essence, the variational Bayes solution tends to match closely with the dominant mode of the posterior. This arises from the choice of the divergence measure $\text{KL}(q(\theta)\|\pi(\theta|\nX)) = \mathbb{E}_q[\log(q(\theta)/\pi(\theta|\nX))]$, which does not penalize solutions where $q(\theta)$ is small while $\pi(\theta|\nX)$ is large. While many statistical applications only focus on the mode of the distribution, definite calculations of the variance and higher moments are critical in predictive and decision-making problems. 

  A natural solution is to consider different divergence measures than those used in variational Bayes. Expectation propagation (EP)~\citep{Mi2001a} was developed to minimize $\mathbb{E}_p[\log(p/q)]$ instead, though this requires an expectation with respect to the intractable posterior. Consequently, EP can only minimize an approximation of this objective. \red{We will call  $\mathbb{E}_p[\log(p/q)]$ the `idealized' EP objective, see~\citet{wainwright2008graphical} for the actual EP loss function.} 
  
More recently, R\'enyi's $\alpha$-divergence~\citep{TErvan2012} has been used as a family of parametrized divergence measures for variational inference \citep{LiTu2016, dieng2017variational}. The $\alpha$-R\'enyi divergence is defined as 
\vspace{-1.5em}
 \begin{align*}
 D_{\alpha} \left( \pi(\theta| \mathbf X_n)\| {q}(\theta) \right):= \frac{1}{\alpha-1} 
  \log \int_{\Theta} {q}(\theta)   \left(\frac{\pi(\theta| \mathbf X_n)}{{q}(\theta) } \right)^{\alpha} d\theta. 
  \end{align*}
  The parameter $\alpha$ spans a number of divergence measures and, in particular, we note that as $\alpha \to 1$ we recover the \red{idealized} EP objective $\text{KL}(\pi(\theta|\nX) \| q(\theta))$, \blue{we will call its minimizer $1-$R\'enyi  approximate posterior}. 
  Settings of $\alpha > 1$ are particularly interesting since, in contrast to VB which lower-bounds the log-likelihood of the data~\eqref{eq:vb_opt2}, one obtains tractable upper bounds. Precisely, using Jensen's inequality,
\begin{align*}
  p(\nX)^\alpha  &= 
\left(  \int p(\theta,\nX)~ \frac{q(\theta)}{q(\theta)}~ d\theta\right)^\alpha 
\le \mathbb{E}_q \left[ \left(\frac{ p(\theta,\nX)}{q(\theta)} \right)^\alpha\right].
\end{align*}
{Applying the logarithm function on either side,}
\begin{align}
  \alpha \log p(\nX)  & \le \log
  \mathbb{E}_q \left[ \left(\frac{ \pi(\theta,\nX)}{q(\theta)} \right)^\alpha\right]\\ 
                    &= \alpha \log p(\nX) + \log \mathbb{E}_q \left[ \left(\frac{ \pi(\theta|\nX)}{q(\theta)} \right)^\alpha\right]  := \mathcal{F}_2(q).
                    \label{eq:renyi}
\end{align}
  Observe that the second term in the expression for $\mathcal{F}_2(q)$ 
  is just $(\alpha-1) D_\alpha(p(\theta|\nX)\|q(\theta))$. Like with the ELBO lower bound, evaluating this upper bound only involves expectations with respect to $q(\theta)$, and only requires evaluating $p(\theta,\nX)$, the unnormalized posterior distribution. Optimizing this upper bound over some class of distributions $\mathcal Q$, we obtain the {\it $\alpha$-R\'enyi} approximation. As noted before, standard variational Bayes, which optimizes a lower-bound, tends to produce approximating distributions that underestimate the posterior variance, resulting in predictions that are overconfident and ignore high-risk regions in the support of the posterior.  We illustrate this in Figure~\ref{fig:1} below that reproduces a result from~\citet{LiTu2016}. The true posterior distribution is an anisotropic Gaussian distribution and the variational family consists of isotropic (or mean-field) Gaussian distributions. Standard \klvb, represented by the curve $\alpha = 0$, clearly fits the mode of the posterior, but completely underestimates the dominant eigen-direction. On the other hand, for large values of $\alpha$ (shown as $\alpha \to +\infty$), the $\alpha$-R\'enyi approximate posterior matches the mode and does a better job of capturing the spread of the posterior. The figure also presents results for the $\alpha = 1$ and the $\alpha  \to -\infty$ cases. As an aside, we observe that our parametrization of the R\'enyi divergence is different from~\citet{LiTu2016}, where the upper-bounds considered in\blue{~\citet{LiTu2016}} \red{this paper} emerge as $\alpha \to -\infty$.
  \begin{figure}[H]
  	\centering
    \includegraphics[scale=0.35]{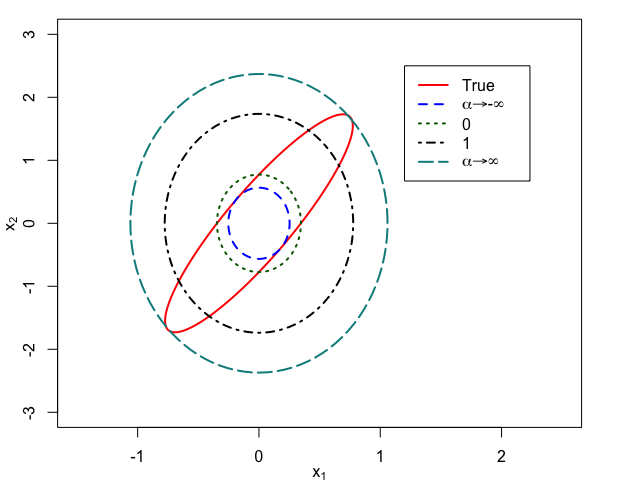}
        \caption{ Isotropic variational $\alpha$-R\'enyi approximations to an anisotropic Gaussian, for different values of $\alpha$ (see also \citet{LiTu2016}).}
  	\label{fig:1}
  \end{figure}
  We note, furthermore, that in tasks such as model selection, the marginal likelihood of the data is of fundamental interest~\citep{grosse2015sandwiching}, and the $\alpha$-R\'enyi upper bound provides an approximation that complements the VB lower bound.
Recent developments in stochastic optimization have allowed the $\alpha$-R\'enyi objective to be optimized fairly easily; see~\citet{LiTu2016} and \citet{ dieng2017variational}.

\subsection{Large Sample Properties}
Despite often state-of-the-art empirical results, variational methods still present a number of unanswered theoretical questions. This is particularly true for $\alpha$-R\'enyi divergence minimization which has empirically demonstrated very promising results for a number of applications~\citep{LiTu2016,dieng2017variational}. In recent work,~\citet{ZhGa2017} have shown conditions under which $\alpha$-R\'enyi variational methods are consistent when $\alpha$ is less than one. Their results followed from a proof for the regular Kullback-Leibler variational algorithm, and thus only apply to situations when a {\em lower-bound} is optimized. As we mentioned before, the setting with $\alpha$ greater than $1$ is qualitatively different from both Kullback-Leibler and R\'enyi divergence with $\alpha < 1$. This setting, which is also of considerable practical interest, is the focus of our paper and we address the question of asymptotic consistency of the approximate posterior distribution obtained by minimizing the R\'enyi divergence. 

Asymptotic consistency~\citep{vdV00} is a basic frequentist requirement of any statistical method, guaranteeing that the `true' parameter is recovered as the number of observations tends to infinity. Table~\ref{tab:1} summarizes the current known results on consistency of VI and EP, and highlights the gap that this paper is intended to fill. \blue{We note that in this work, we are not analyzing the actual EP algorithm~\citep{wainwright2008graphical}, and are instead looking at the global minimizer of the ideal EP objective.} 
 	
 	\begin{table}[hbt]
 		\centering
  		\begin{tabular}{|c|c|}
  		 	\hline
   			 Methods& Papers \\
   			\hline
	   			\klvb & \citet{WaBl2017},\citet{ZhGa2017} \\
                $\alpha$-R\'enyi ($\alpha < 1$) & \citet{ZhGa2017}\\
                $\alpha$-R\'enyi ($\alpha > 1$) & This paper\\   	
                \blue{$1$-R\'enyi~($\alpha \to 1$,  global EP )}  & This paper\\
   			 \hline
  		\end{tabular}
  		\caption{Known results on the asymptotic consistency of variational methods.}
	  	\label{tab:1}
	\end{table}
 	
As we will see, filling these gaps will require new developments. This follows from two complicating factors: 1) R\'enyi divergence with $\alpha > 1$ {\em upper-bounds} the log-likelihood, and 2) this requires new analytical approaches involving expectations with respect to the intractable $\pi(\theta|\nX)$. We thus emphasize that the results in our paper are not a consequence of recent analysis in~\citet{WaBl2017} and~\citet{ZhGa2017} for the~\klvb, and our proofs differ substantially from theirs.
	
We establish our main result in Theorem~\ref{thm:consistency} under mild regularity conditions. First, in Assumption~\ref{assume:prior} we assume that the prior distribution places positive mass in the neighborhood of the true parameter $\theta_0$, and that it is uniformly bounded. 
The former condition is a reasonable assumption to make - clearly, if the prior does not place any mass in the neighborhood of the true parameter (assuming one exists) then neither will the posterior. The uniform boundedness condition on the other hand is attendant to a loss of generality. In particular, we cannot assume certain heavy-tailed priors (such as Pareto) which might be important for some engineering applications. Second, we also make the mild assumption that the likelihood function is locally asymptotically normal (LAN) in Assumption~\ref{assume:lan}. This is a standard assumption that holds for a variety of statistical/stochastic models. However, while the LAN assumption will be critical for establishing the asymptotic consistency results, it is unclear if it is necessary as well. We observe that~\citet{WaBl2017} make a similar assumption in analyzing the consistency of \klvb. We note that any model $P_\theta$ that is twice differentiable in the parameter $\theta$ satisfies the LAN condition~\citep{vdV00}. 
Also critical to the consistency result are the properties of the variational family. Assumption~\ref{assume:var} is a mild condition that insists on there existing Dirac delta distributions in an open neighborhood of the true parameter $\theta_0$. This is usually easy to verify: if the variational family consists of Gaussian distributions, for instance, then Dirac delta distributions are present at all points in the parameter space. 
Next, we assume that the variational family contains `good sequences' that are constructed so as to converge at the same rate as the true posterior (in sequence with the sample size), with the first moment of an element in the sequence the maximum likelihood estimator of the parameter (at a given sample size). We also require the tails of the good sequence to bound the tails of the true posterior. We provide examples that verify the existence of good sequences in commonly used variational families, such as the mean-field family.

The proof of Theorem~\ref{thm:consistency} is a consequence of a series of auxiliary results. First, in Lemma~\ref{lem:ndegen} we characterize $\alpha$-R\'enyi minimizers and show that the sequence must have a  Dirac delta distribution at the true parameter $\theta_0$ in the large sample limit.\ Then, in Lemma~\ref{lem:subopt1} we argue that any convex combination of a Dirac delta distribution at the true parameter $\theta_0$ with any other distribution can not achieve zero $\alpha$-R\'enyi divergence in the limit. \ Next, we show in Proposition~\ref{prop:UBfin} that the $\alpha$-R\'enyi divergence between the true posterior and the closest variational approximator is bounded above in the large sample limit.\ We demonstrate this by showing that a `good sequence' of distributions (see Assumption~\ref{def:gsequence}) has asymptotically bounded $\alpha$-R\'enyi divergence, implying that the minimizers do as well.\ Note that this  does not yet prove that the minimizing sequence converges to a Dirac delta distribution at $\theta_0$.
	
The next stage of the analysis is concerned with demonstrating that the minimizing sequence does indeed converge to a Dirac delta distribution concentrated at the true parameter. We demonstrate this fact as a consequence of Proposition~\ref{prop:UBfin}, Lemma~\ref{lem:ndegen}, and Lemma~\ref{lem:subopt1}. In essence, Theorem~\ref{thm:consistency} shows that, $\alpha$-R\'enyi minimizing distributions are arbitrarily close to a good sequence, in the sense of R\'enyi divergence with the posterior in the large sample limit.
	
In our next result in Theorem~\ref{thm:roc}, under additional regularity conditions, we further characterize the rate of convergence of the $\alpha-$R\'enyi minimizers. We demonstrate that the $\alpha-$R\'enyi minimizing sequence cannot concentrate to a point in the parameter space at a faster rate than the true posterior concentrates at the true parameter $\theta_0$. Consequently, the tail mass in the $\alpha$-R\'enyi minimizer could dominate that of the true posterior. This is in contrast with~\klvb, where the evidence lower bound (ELBO) maximizer typically under-estimates the variance of the true posterior. 

Here is a brief roadmap of the paper. In Section~\ref{sec:b-VB}, we formally introduce the $\alpha$-R\'enyi methodology, and rigorously state the necessary regularity assumptions. We present our main result in Section~\ref{sec:asymptote}, presenting only the proofs of the primary results. In Section~\ref{sec:asymptoteEP} we also recover the consistency of \blue{$1-$R\'enyi},  \red{idealized expectation propagation~(EP)} approximate posteriors, \blue{the global minimizer of EP objective} as a consequence of the results in~Section~\ref{sec:asymptote}. \blue{In Section~\ref{sec:latent}, we generalize the notion of good sequence to the models with local latent parameters and under some additional regularity conditions, 
     prove asymptotic consistency of the $\alpha$-R\'enyi approximate posterior over global latent parameters}. All proofs of auxiliary and technical results are delayed to the Appendix.


\section{Variational Approximation Using $\alpha-$R\'enyi Divergence}~\label{sec:b-VB}
We assume that the data-generating distribution is parametrized by $\theta \in \Theta \subseteq \bbR^d$, $d \geq 1$ and is absolutely continuous with respect to the Lebesgue measure, so that the likelihood function $p(\cdot|\theta)$ is well-defined. We place a prior $\pi(\theta)$ on the unknown $\theta$, and denote $\pi(\theta|\nX) \propto p(\theta,\nX)$ as the posterior distribution, where $\nX = \{\xi_1,\ldots, \xi_n\}$ are the $n$ independent and identically distributed (i.i.d.) observed samples generated from the `true' measure $P_{\theta_0}$ {\color{black}in the likelihood family}. 
In this paper we will study the $\alpha-$R\'enyi-approximate posterior $q^*_n$ that minimizes the $\alpha-$R\'enyi divergence between $\pi(\theta| \mathbf X_n)$ and $\tilde{q}(\cdot)$ in \blue{some} set $\sQ$ for a given $\alpha>1$; that is, 
\begin{eqnarray}
  q^*_n(\theta) := \text{argmin}_{\tilde{q} \in \mathcal{Q}} 
  \left\{D_{\alpha} \left( \pi(\theta| \mathbf X_n)\| \tilde{q}(\theta) \right):= \frac{1}{\alpha-1} \log \int_{\Theta} \tilde{q}(\theta)   \left(\frac{\pi(\theta| \mathbf X_n)}{\tilde{q}(\theta) } \right)^{\alpha} d\theta \right\} . \label{eq:ep_opt}
\end{eqnarray}

Recall that
\begin{definition}[Dominating distribution]~\label{def:dom}
	The distribution $Q$ dominates the distribution P ($P \ll Q$), when $P$ is absolutely continuous with respect to $Q$; that is, $supp(P) \subseteq supp(Q)$.
\end{definition}
Clearly, when $\alpha>1$, the $\alpha-$R\'enyi divergence in~\eqref{eq:ep_opt} is infinite for any distribution $q(\theta) \in \cQ$  that does not dominate the true posterior distribution \citep{TErvan2012}. Intuitively, this is the reason why the $\alpha$-R\'enyi approximation can better capture the spread of the posterior distribution.

Our goal is to study the statistical properties of the $\alpha-$R\'enyi-approximate posterior as defined in~\eqref{eq:ep_opt}. In particular, we show that under certain regularity conditions on the likelihood, the prior, and the variational family the $\alpha-$R\'enyi-approximate posterior is consistent or converges weakly to a Dirac delta distribution at the true parameter $\theta_0$ as the number of observations $n \to \infty.$ 
\blue{
\subsection{Asymptotic Notations}
We first define asymptotic notations that frequently appear in our proofs and assumptions. We write $a_n \sim b_n$ when the sequence $\{a_n\}$ can be approximated by a sequence $\{b_n\}$ for large $n$, 
so that the ratio $\frac{a_n}{b_n}$ approaches 1 as $n \to \infty$, $a_n = O(b_n)$ as $n \to \infty$, when there exists a positive number $M$ and $n_0\geq 1$, such that $a_n \leq M b_n \ \forall n \geq n_0$, and $a_n \lesssim b_n$ when the sequence $\{a_n\}$ is bounded above by a sequence $\{b_n\}$ for large $n$.  }

\subsection{Assumptions and Definitions}
First, we assume the following restrictions on permissible priors.
\vspace{.5em}
\begin{assumption}[Prior Density]~\label{assume:prior}
  \begin{enumerate}
    \item[(1)] The prior density function $\pi(\theta)$ is continuous with non-zero measure in the neighborhood of the true parameter
    $\theta_0$, and
    \item[(2)] there exists a constant $M_p > 0$ such that
    $\pi(\theta) \leq M_p~\forall \theta \in \Theta$ and $\bbE_{ \pi(\theta)}[|\theta|]< \infty$. 
  \end{enumerate}
\end{assumption}

Assumption~\ref{assume:prior}(1) is typical in Bayesian consistency analyses - quite obviously, if the prior does not place any mass around the true parameter then the (true) posterior will not either. Indeed, it is well known~\citep{Sc1965,Gh1997} that for any prior that satisfies Assumption~\ref{assume:prior}(1), under very mild assumptions, 
\begin{align}
\label{eq:4}
\pi(U | \nX) = \int_{U} \pi(\theta | \mathbf X_n) d\theta \rightarrow 1 \quad~P_{\theta_0}-a.s.~\text{as}~n\to\infty,
\end{align}
where $P_{\theta_0}$ represents the true data-generating distribution, $U$ is some neighborhood of the true parameter $\theta_0$.
Assumption~\ref{assume:prior}(2), on the other hand, is a mild technical condition which is satisfied by a large class of prior distributions, for instance, many of the exponential-family distributions. For simplicity, we write $q_n (\theta) \Rightarrow q(\theta)$ to represent weak convergence of the distributions corresponding to the densities $\{q_n\}$ and $q$.

We define a generic probabilistic order term, $o_{P_\theta}(1)$ with respect to measure $P_{\theta}$ as follows  
\vspace{0.5em}
\begin{definition}~\label{def:prob}
	 A sequence of random variables $\{\xi_n\}$ is of  probabilistic order $o_{P_\theta}(1)$ when
	\[ \lim_{n \to \infty} P_{\theta}( |\xi_n| > \delta) = 0, \text{ for any $\delta > 0$ }.\]
	\end{definition} 
\red{We write $a_n \sim b_n$ when the sequence $\{a_n\}$ can be approximated by a sequence $\{b_n\}$ for large $n$, so that the ratio $\frac{a_n}{b_n}$ approaches 1 as $n \to \infty$, $a_n = O(b_n)$ as $n \to \infty$, when there exists a positive number $M$ and $n_0\geq 1$, such that $a_n \leq M b_n \ \forall n \geq n_0$, and $a_n \lesssim b_n$ when the sequence $\{a_n\}$ is bounded above by a sequence $\{b_n\}$ for large $n$.} Next, we assume the likelihood function satisfies the following asymptotic normality property (see ~\cite{vdV00} as well),
\vspace{1em}
\begin{assumption}[Local Asymptotic Normality]~\label{assume:lan}
  Fix $\theta_0 \in \Theta$. The sequence of log-likelihood functions $\{ \log P_n(\theta) = \sum_{i=1}^n \log p(x_i|\theta) \}$ satisfies a \emph{local asymptotic normality (LAN)} condition, if there exists a sequence of matrices $\{r_n\}$, a matrix $I(\theta_0)$ and a sequence of random vectors $\{\Delta_{n,\theta_0}\}$ weakly converging to $\sN(0,I(\theta_0)^{-1})$ as $n \to \infty$, such that for every compact set $K \subset \mathbb{R}^d$ 
  \[
  \sup_{h \in K} \left| \log {P_n(\theta_0 + r_n^{-1} h)} - \log {P_n(\theta_0)} - h^T I(\theta_0)
  \Delta_{n,\theta_0} + \frac{1}{2} h^T I(\theta_0)h \right| \xrightarrow{P_{\theta_0}} 0 \ \text{as $n  \to \infty$  }.
  \]
\end{assumption}
The LAN condition is standard, and holds for a wide variety of models. The assumption affords significant flexibility in the analysis by allowing the likelihood to be asymptotically approximated by a scaled Gaussian centered around $\theta_0$~\citep{vdV00}. We observe that~\cite{WaBl2017} makes a similar assumption in their consistency analysis of the variational lower bound. All statistical models $P_{\theta}$, which are differentiable in quadratic mean with respect to parameter $\theta$, satisfy the LAN condition with $r_n = \sqrt{n} I$, where $I$ is an identity matrix~\cite[Chapter-7]{vdV00}. \blue{Also, all models $P_{\theta}$ which are twice continuously differentiable in $\theta$ are also differentiable in quadratic mean and thus satisfy LAN condition, for instance most exponential family models satisfy the LAN condition.  } 

 Now, let $\delta_{\theta}$ represent the Dirac delta, or singular distribution, concentrated at the parameter $\theta$.\begin{definition}[Degenerate distribution]\label{def:degen}
 A sequence of distributions $\{q_n(\theta)\}$ converges weakly to $\delta_{\theta'}$ that is, $q_n(\theta) \Rightarrow \delta_{\theta'} $ for some $ \theta' \in \Theta$,  if and only if  $\forall \eta > 0$
	\begin{align*}
		\lim_{n \to \infty } \int_{\{|\theta -\theta'| > \eta\}} {q}_n(\theta) 	d\theta = 0.
	\end{align*}
 \end{definition}
 

We use the term `non-degenerate' for a sequence of distributions that does not converge in distribution to a Dirac delta distribution. We also use the term `non-singular' to refer to a distribution that does not contain any singular components (i.e., it is absolutely continuous with respect to the Lebesgue measure). 
If a distribution contains both singularities and absolutely continuous components we term it a `singular distribution'. \blue{More formally,
 \begin{definition}[Singular distributions]
  Let $d(\theta)$ be a distribution with support $\Theta$ and for any $i\in\{1,\ldots,K\}$ and $K<\infty$ denote $\delta_{\theta_i},$ as the Dirac delta distributions at $\theta_i$ for any $
  \theta_i \in \Theta$, then we define singular distribution $q(\theta)$;
  \[q(\theta):= wd(\theta)+\sum_{i=1}^Kw^i\delta_{\theta_i},\]
  where $w,\{w^i\}_{i=1}^K \in [0,1)$ and $w+\sum_{i=1}^{K}w^i=1$ with at least one of the weights $\{w^i\}_{i=1}^K$ strictly positive.
     \end{definition}
}
Finally, we come to the conditions on the variational family $\cQ$. 
\vspace{0.25em}
\begin{assumption}[Variational Family]~\label{assume:var}
  The variational family $\cQ$ must contain all Dirac delta distributions in some open neighborhood of $\theta_0 \in \Theta$.
\end{assumption}  
Since we know that the posterior converges weakly to a Dirac delta distribution function, this assumption is a necessary condition to ensure that the variational approximator exists in the limit.  Next, we define the rate of convergence of a sequence of distributions to a Dirac delta distribution as follows.
\vspace{1em}
\begin{definition}[Rate of convergence]\label{def:roc}
	A sequence of distributions $\{q_n(\theta) \}$ converges weakly to $\delta_{\theta_1}$, $\forall \theta_1 \in\Theta$ at the rate of $\gamma_n$ if 
	\begin{enumerate}
		\item[(1)] the  sequence of means $ \{\check \theta_n := \int \theta q_n(\theta) d\theta \}$ converges to $\theta_1$ as $n\to \infty$, and 
		\item[(2)] the variance of $\{q_n(\theta) \}$ satisfies
		\[E_{q_n(\theta)}[|\theta - \check \theta_n|^2] = O\left (\frac{1}{\gamma_n^2} \right).\]
	\end{enumerate}
\end{definition}

A crucial assumption, on which rests the proof of our main result, is the existence of what we call a `good sequence' in $\cQ$.
\vspace{0.5em}
\begin{assumption}[\Good]\label{def:gsequence}
	\blue{For any $\bar M>0$}, the variational family $\cQ$ contains a sequence of distributions $\{\bar{q}_n(\theta)\}$ with the following properties:
	\begin{enumerate}
        \item[(1)] there exists $n_1 \geq 1$ such that $\int_{\Theta}\theta \bar q_n(\theta) d\theta = \hat{\theta}_n $, where $\hat{\theta}_n$ is the maximum likelihood estimate, for each $n\geq n_1$,
		\item[(2)] there exists $n_{\bar{M}}\geq 1$ such that the rate of convergence is $\gamma_n = \sqrt{n}$ , \blue{that is \( E_{\bar q_n(\theta)}[|\theta - \hat \theta_n|^2] \leq  \frac{\bar M}{\gamma_n^2}  \) for each $n\geq n_{\bar M}$,}
		\item[(3)] there exist a compact ball $K \subset  \Theta$ containing the true parameter $\theta_0$ and $n_2\geq1$, such that the sequence of Radon-Nikodym derivatives of the posterior density with respect to the sequence $\{\bar{q}_n\}$ exists and is bounded above by a finite positive constant $M_r$ outside of $K$ for all $n \geq n_2$: \[\frac{  \ \pi(  \theta | \nX)}{  \bar{q}_n(\theta)} \leq M_r, \ \forall \theta \in \Theta \backslash K \text{ and } \forall n\geq n_2, \quad P_{\theta_0}-a.s. \]
		\item[(4)] there exists $n_3\geq 1$ such that the good sequence $\{\bar q_n(\theta)\}$ is log-concave in $\theta$ for all $n \geq n_3$.
		
	\end{enumerate} 
We term such a  sequence of distributions as `good  sequences'. 
\end{assumption}

The first two parts of the assumption hold so long as the variational family $\mathcal{Q}$ contains an open neighborhood of  distributions around $\delta_{\theta_0}$. The third part essentially requires that for $n \geq n_2$, the tails of $\{\bar{q}_n(\theta)\}$ must decay no faster than the tails of the posterior distribution. Since, the good sequence converges weakly to $\delta_{\theta_0}$, this assumption is a mild technical condition. The last assumption implies that the good sequence is, for large sample sizes, a maximum entropy distribution under some deviation constraints on the entropy maximization problem~\citep{grechuk2009maximum}. Note that this does not imply that the good sequence is necessarily Gaussian (which is the maximum entropy distribution specifically under standard deviation constraints). 

We note that this assumption is on the family $\mathcal{Q}$, and not on the minimizer of the R\'enyi divergence.
We demonstrate the existence of good sequences for some example models. 
\vspace{1em}
\begin{example}
Consider a model whose likelihood is an $m$-dimensional multivariate Gaussian likelihood with unknown mean vector $\pmb{\mu}$ and known covariance matrix $\mathbf{\Sigma}$. Using an $m$-dimensional multivariate normal distribution with mean vector $\pmb{\mu_0}$ and covariance matrix $\mathbf{\Sigma}$ as conjugate prior, the posterior distribution is
\[\pi(\pmb\mu|\nX) = \sqrt{ \frac{(n+1)^m}{  (2\pi)^m det\left( {\mathbf \Sigma} \right) } } e^{-\frac{n+1}{2} \left( \pmb{\mu} - \frac{\sum_{i=1}^{n}X_i + \pmb{\mu_0}}{n +1} \right)^T \mathbf\Sigma^{-1} \left( \pmb{\mu} - \frac{\sum_{i=1}^{n}X_i + \pmb{\mu_0}}{n +1} \right) }, \] 
where exponents `$T$' and `$-1$' denote transpose and inverse. Next, consider the mean-field variational family, that is the product of $m$ 1-dimensional normal distributions. Consider a sequence in the variational family with mean $\{\mu^j_{q_n}, j \in \{1,2,\ldots, m\} \}$ and variance $\bigg\{\frac{\sigma^2_j}{\gamma^2_n}, j \in \{1,2,$ $\ldots, m\}  \bigg\}$:
\[ q_n(\pmb{\mu})= \prod_{j=1}^{m} \sqrt{\frac{\gamma^2_n}{2\pi \sigma^2_j  }} e^{-\frac{\gamma^2_n}{2 \sigma_j^2} \left( \mu_j - \mu_{q_n}^j  \right)^2  
} = \sqrt{\frac{\gamma^{2m}_n}{ (2\pi)^m det( \mathbf I_{\sigma})  }} e^{-\frac{\gamma^2_n}{2 } \left( \pmb \mu - \pmb \mu_{q_n}  \right)^T \mathbf I_{\sigma}^{-1}  \left( \pmb \mu - \pmb \mu_{q_n}  \right) 
} , \] 
where $\pmb \mu_{q_n} = \{\mu_{q_n}^1 , \mu_{q_n}^2,\ldots, \mu_{q_n}^m\}$ and $\mathbf I_{\sigma}$ is an $m\times m$ diagonal matrix with diagonal elements $\{ \sigma_1^2, \sigma_2^2,$
$\ldots, \sigma_m^2 \}$. Notice that $\gamma_n$ is the rate at which the sequence $\{q_n(\pmb{\mu})\}$ converges weakly. 
It is straightforward to observe that the variational family contains sequences that satisfy properties (1) and (2) in Assumption~\ref{def:gsequence}, that is 
\[ \gamma_n = \sqrt{n} \text{ and } \pmb{\mu_{q_n}} = \frac{\sum_{i=1}^{n}X_i + \pmb{\mu_0}}{n +1}. \]

For brevity, denote $\pmb{\tilde\mu}_n := \pmb \mu - \pmb \mu_{q_n} = \pmb{\mu} - \frac{\sum_{i=1}^{n}X_i + \pmb{\mu_0}}{n +1} $ . To verify property (3) in Assumption~\ref{def:gsequence} consider the ratio,
\begin{align*}
	\frac{\pi(\pmb\mu|\nX)}{q_n(\pmb{\mu})} &= \frac{\sqrt{ \frac{(n+1)^m}{  (2\pi)^m det\left( {\mathbf \Sigma} \right) } } e^{-\frac{n+1}{2} \pmb{\tilde\mu}_n^T \mathbf\Sigma^{-1} \pmb{\tilde\mu}_n }}{\sqrt{\frac{\gamma^{2m}_n}{ (2\pi)^m det( \mathbf I_{\sigma})  }} e^{-\frac{\gamma^2_n}{2 } \pmb{\tilde\mu}_n^T \mathbf I_{\sigma}^{-1}  \pmb{\tilde\mu}_n 
	}}. 
\end{align*}
Using the fact that $\gamma^2_n = n < n+1$, $\frac{n+1}{\gamma^2_n}=1+\frac{1}{n}<2$, therefore the ratio above can be bounded above by
\begin{align*}
	\frac{\pi(\pmb\mu|\nX)}{q_n(\pmb{\mu})} &\leq  \sqrt{ \frac{ 2^m det( \mathbf I_{\sigma})}{  det\left( {\mathbf \Sigma} \right) } } \frac{ e^{-\frac{n+1}{2} \pmb{\tilde\mu}_n^T \mathbf\Sigma^{-1} \pmb{\tilde\mu}_n }}{ e^{-\frac{n+1}{2 } \pmb{\tilde\mu}_n^T \mathbf I_{\sigma}^{-1}  \pmb{\tilde\mu}_n 
	}} =  \sqrt{ \frac{ 2^m det( \mathbf I_{\sigma})}{  det\left( {\mathbf \Sigma} \right) } }  e^{-\frac{n+1}{2} \pmb{\tilde\mu}_n^T  \left( \mathbf\Sigma^{-1}  - \mathbf I_{\sigma}^{-1} \right) \pmb{\tilde\mu}_n }. 
\end{align*}

Observe that if the matrix $\left( \mathbf\Sigma^{-1}  - \mathbf I_{\sigma}^{-1} \right)$ is positive definite then the ratio above is bounded by $\sqrt\frac{2^mdet( \mathbf I_{\sigma})}{  det\left( {\mathbf \Sigma} \right) }$ and if $\cQ$ is large enough it will contain  distributions that satisfy this condition. To fix the idea, consider the univariate case, where the positive definiteness implies that the variance of the good sequence is greater than the variance of the posterior for all large enough `$n$'. That is, the tails of the good sequence decay slower than the tails of the posterior.
\end{example}
\vspace{1em}
\begin{example}
Consider a model whose likelihood is a univariate normal distribution with unknown mean $\mu$ and known variance $\sigma$. Using a univariate normal distribution with the mean $\mu_0$ and the variance $\sigma$ as prior, the posterior distribution is
\begin{align}
\pi(\mu|\nX) =  \sqrt{\frac{n+1}{2\pi \sigma^2}} e^{  -\frac{(n+1)} {2 \sigma^2} \left( \mu - \frac{ \mu_0 + \sum_{i=1}^{n}X_i }{n+1} \right)^2 }.  
\label{eq:eqg1}
\end{align} 
Next, suppose the variational family $\cQ$ is the set of all Laplace distributions. Consider a sequence $\{q_n(\mu)\}$ in $\cQ$ with the location and the scale parameter $k_n$ and $b_n$ respectively, that is
\[q_n(\mu) = \frac{1}{2  b_n} e^{-\frac{|\mu- k_n |}{b_n}}. \]
To satisfy properties (1) and (2) in Assumption~\ref{def:gsequence}, we can choose $k_n= \frac{ \mu_0 + \sum_{i=1}^{n}X_i }{n+1} $ and $b_n= \sqrt{\frac{\pi \a^{\frac{1}{\a-1}}\sigma^2}{2n}} ,\ \forall \a>1$. For brevity denote  $\tilde\mu_n = \mu - \frac{ \mu_0 + \sum_{i=1}^{n}X_i }{n+1}$. To verify property (3) in Assumption~\ref{def:gsequence} consider the ratio,
\begin{align*}
\frac{\pi(\mu|\nX)}{q_n(\mu)} &= \frac{\sqrt{\frac{n+1}{2\pi \sigma^2}} e^{  -\frac{(n+1)} {2\sigma^2} \tilde\mu_n^2 } }{ \frac{1}{2} \sqrt{\frac{2n}{\pi \a^{\frac{1}{\a-1}}\sigma^2}} e^{-  \frac{\sqrt{2n}|\tilde\mu_n |}{\sqrt{{\pi \a^{\frac{1}{\a-1}}\sigma^2}}}  }} \leq \sqrt{\frac{2}{\a^{\frac{1}{\a-1}}}} \frac{ e^{  -\frac{(n+1)} {\pi \a^{\frac{1}{\a-1}}\sigma^2} \tilde\mu_n^2 } }{  e^{- \left | \frac{\sqrt{2(n+1)}|\tilde\mu_n |}{\sqrt{{\pi \a^{\frac{1}{\a-1}}\sigma^2}}} \right| }}  \leq \sqrt{\frac{2}{\a^{\frac{1}{\a-1}}}} e^{1/2},
\end{align*}
where the  last inequality follows due  to the fact that $ e^{-(\frac{x^2}{2}-|x|)}<e^{1/2}$. 
%

For the same posterior, we can also choose $\cQ$ to be the set of all Logistic distributions. Consider a sequence $\{q_n(\mu)\}$ in this variational family with the mean and the scale parameter $m_n$ and $s_n$ respectively; that is
\[ q_n(\mu) = \frac{1}{s_n}\left( e^{\frac{\mu-m_n}{2s_n}} + e^{-\frac{\mu-m_n}{2s_n}} \right)^{-2}. \] To satisfy properties (1) and (2) in Assumption~\ref{def:gsequence}, we can choose $m_n= \frac{ \mu_0 + \sum_{i=1}^{n}X_i }{n+1} $ and $s_n= \sqrt{\frac{2\pi \a^{\frac{1}{\a-1}}\sigma^2}{n+1}} ,\ \forall \a>1$. For brevity denote  $\tilde\mu_n = \mu - \frac{ \mu_0 + \sum_{i=1}^{n}X_i }{n+1}$. To verify property (3) in Assumption~\ref{def:gsequence} observe that,
\begin{align*}
\frac{\pi(\lambda|\nX)}{q_n(\lambda)} &= \frac{\sqrt{\frac{n+1}{2\pi \sigma^2}} e^{  -\frac{(n+1)} {2\sigma^2} \left( \mu - \frac{ \mu_0 + \sum_{i=1}^{n}X_i }{n+1} \right)^2 } }{\frac{1}{s_n}\left( e^{\frac{\mu-m_n}{2s_n}} + e^{-\frac{\mu-m_n}{2s_n}} \right)^{-2}} = \frac{1}{\sqrt{\a^{\frac{1}{\a-1}}}} e^{  -\left(\frac{\tilde \mu_n} {s_n} \right)^2 } \left( e^{  \left(\frac{\tilde \mu_n} {2s_n} \right) }  + e^{  -\left(\frac{\tilde \mu_n} {2s_n} \right) } \right) \leq \frac{1}{\sqrt{\a^{\frac{1}{\a-1}}}} 2e^{1/16},
\end{align*}
where the  last inequality follows due  to the fact that $ e^{-x^2}\left( e^{x/2} + e^{-x/2}  \right) < 2e^{1/16}$. 
%

\end{example}
\vspace{1em}
\begin{example}
Consider a univariate exponential likelihood model with the unknown rate parameter $\lambda$. For some prior distribution $\pi(\lambda)$, the posterior distribution is
\[\pi(\lambda|\nX) = \frac{\pi(\lambda) \lambda^n e^{-\lambda \sum_{i=1}^n X_i }}{\int \pi(\lambda) \lambda^n e^{-\lambda \sum_{i=1}^n X_i } d\lambda} .  \] Choose $\cQ$ to be the set of Gamma distributions. Consider a sequence $\{q_n(\mu)\}$ in the variational family with the shape and the rate parameter $k_n$ and $\beta_n$ respectively, that is
\[q_n(\lambda) = \frac{\beta_n^{k_n}}{\Gamma(k_n)} \lambda^{k_n-1}e^{-\lambda \beta_n}, \]
where $\Gamma(\cdot)$ is the $\Gamma-$ function. To satisfy properties (1) and (2) in Assumption~\ref{def:gsequence}, we can choose $k_n= n+1$ and $\beta_n= \sum_{i=1}^{n}X_i$. To verify property (3) in Assumption~\ref{def:gsequence} consider the ratio,
\begin{align*}
\frac{\pi(\lambda|\nX)}{q_n(\lambda)} = \frac{\pi(\lambda) \lambda^n e^{-\lambda \sum_{i=1}^n X_i } }{ \frac{\beta_n^{k_n}}{\Gamma(k_n)} \lambda^{k_n-1}e^{-\lambda \beta_n} \int \pi(\lambda) \lambda^n e^{-\lambda \sum_{i=1}^n X_i } d\lambda } = \frac{\pi(\lambda)\Gamma(n+1) }{\left( \sum_{i=1}^n X_i \right)^{n+1}\int \pi(\lambda) \lambda^n e^{-\lambda \sum_{i=1}^n X_i } d\lambda }. 
\end{align*}

Now, observe that $ \frac{ \left( \sum_{i=1}^n X_i \right)^{n+1} }{ \Gamma(n+1)} \lambda^n e^{-\lambda \sum_{i=1}^n X_i } $ is the density of Gamma distribution with the mean $\frac{n+1}{\sum_{i=1}^n X_i}$ and the variance $\frac{1}{n+1}\left( \frac{n+1}{\sum_{i=1}^n X_i} \right)^2$. Since, we assumed in Assumption~\ref{assume:prior}(2) that $\pi(\lambda)$ is bounded from above by $M_p$, therefore for large $n$, 
\(\frac{ \left( \sum_{i=1}^n X_i \right)^{n+1} }{ \Gamma(n+1)} \int \pi(\lambda) \lambda^n e^{-\lambda \sum_{i=1}^n X_i } d\lambda \sim  \pi\left( \frac{n+1}{\sum_{i=1}^n X_i} \right) \). Hence, it follows that for large enough $n$
\[\frac{\pi(\lambda|\nX)}{q_n(\lambda)} \leq \frac{M_p}{\pi(\lambda_0)},\]
where $\frac{\sum_{i=1}^n X_i}{n+1} \to \frac{1}{\lambda_0}$ as $n \to \infty$.

\end{example}


\section{Consistency of $\alpha-$R\'enyi Approximate Posterior}~\label{sec:asymptote}
Recall that the $\alpha-$R\'enyi-approximate posterior $q^*_n$ is defined as
\begin{eqnarray}
q^*_n(\theta) := \text{argmin}_{\tilde{q} \in \mathcal{Q}} 
\left\{D_{\alpha} \left( \pi(\theta| \mathbf X_n)\| \tilde{q}(\theta) \right):= \frac{1}{\alpha-1} \log \int_{\Theta} \tilde{q}(\theta)   \left(\frac{\pi(\theta| \mathbf X_n)}{\tilde{q}(\theta) } \right)^{\alpha} d\theta \right\} . \label{eq:ep_opt1}
\end{eqnarray}
We now show that under the assumptions in the previous section, the $\alpha-$R\'enyi approximators are asymptotically consistent as the sample size increases in the sense that \(q_n^* \Rightarrow \delta_{\theta_0} \text{ in -} P_{\theta_0} \) probability as $n \to \infty$. To illustrate the ideas clearly, we present our analysis assuming a univariate parameter space, and that the model $P_{\theta}$ is twice differentiable in parameter $\theta$, and therefore satisfies the LAN condition with $r_n = \sqrt{n}$ \citep{vdV00}. The LAN condition together with the existence of a sequence of test functions \cite[Theorem 10.1]{vdV00} also implies that the posterior distribution converges weakly to $\delta_{\theta_0}$ at the rate of $\sqrt{n}$. The analysis can be easily adapted to multivariate parameter spaces. 

We will first establish some structural properties of the minimizing sequence of distributions. We show that for any sequence of distributions converging weakly to a non-singular distribution the $\alpha-$R\'enyi divergence is unbounded in the limit. 

\vspace{1em}
\begin{lemma}\label{lem:ndegen}
	Under Assumptions~\ref{assume:prior},~\ref{assume:lan},~\ref{assume:var}, and \ref{def:gsequence}, the $\alpha-$R\'enyi divergence between the true posterior and the sequence $\{q_n(\theta)\} \subset \cQ$ can only be finite in the limit if $q_n(\theta)$ converges weakly to a singular distribution $q(\theta)$ with a Dirac delta distribution at the true parameter $\theta_0$.
\end{lemma}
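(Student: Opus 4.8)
The plan is to prove the contrapositive form highlighted just before the statement: if $\{q_n\}$ converges weakly to a limit $q$ that carries no mass at $\theta_0$ (in particular, any non-singular $q$), then $D_{\alpha}(\pi(\theta|\nX)\|q_n)\to\infty$ $P_{\theta_0}$-a.s. Throughout write
\[ I_n := \int_\Theta q_n(\theta)\left(\frac{\pi(\theta|\nX)}{q_n(\theta)}\right)^{\alpha} d\theta, \qquad D_{\alpha}(\pi(\theta|\nX)\|q_n)=\frac{1}{\alpha-1}\log I_n, \]
so that, since $\alpha>1$, it suffices to show $I_n\to\infty$.

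The engine of the argument is a single coarsening inequality. For any measurable $A\subseteq\Theta$, set $\pi(A|\nX)=\int_A\pi(\theta|\nX)\,d\theta$ and $Q_n(A)=\int_A q_n(\theta)\,d\theta$. Applying Jensen's inequality to the convex map $x\mapsto x^{\alpha}$ (recall $\alpha>1$) under the normalized density $q_n\mathbf{1}_A/Q_n(A)$, and using $\int_A q_n(\pi/q_n)\,d\theta=\pi(A|\nX)$, gives
\[ I_n \ \ge\ \int_A q_n(\theta)\left(\frac{\pi(\theta|\nX)}{q_n(\theta)}\right)^{\alpha} d\theta \ \ge\ Q_n(A)\left(\frac{\pi(A|\nX)}{Q_n(A)}\right)^{\alpha} \ =\ \frac{\pi(A|\nX)^{\alpha}}{Q_n(A)^{\alpha-1}}. \]
(Equivalently this is the data-processing inequality for the two-cell partition $\{A,A^{c}\}$.) The utility of this bound is that it decouples into the posterior mass of $A$, which concentrates, and the $q_n$-mass of $A$, which can be driven to zero on a suitably shrinking $A$.

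Now take a shrinking neighborhood $A_n=\{\theta:|\theta-\theta_0|\le\epsilon_n\}$ with $\epsilon_n\to0$ but $\sqrt n\,\epsilon_n\to\infty$ (e.g. $\epsilon_n=n^{-1/4}$). Because the posterior contracts to $\theta_0$ at rate $\sqrt n$ — a consequence of the LAN Assumption~\ref{assume:lan} together with the prior Assumption~\ref{assume:prior}, as recorded around~\eqref{eq:4} — the first factor satisfies $\pi(A_n|\nX)\to1$ $P_{\theta_0}$-a.s. For the second factor, suppose $q_n\Rightarrow q$ with $q(\{\theta_0\})=0$, which holds for every non-singular $q$ and, more generally, whenever the weak limit has no Dirac component at $\theta_0$. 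Since the balls are nested ($A_n\subseteq A_m$ for $n\ge m$) and closed, the Portmanteau theorem gives $\limsup_n Q_n(A_n)\le\limsup_n Q_n(A_m)\le Q(A_m)$ for every fixed $m$; letting $m\to\infty$ and using continuity of the measure from above yields $\limsup_n Q_n(A_n)\le Q(\{\theta_0\})=0$. Substituting into the displayed bound,
\[ I_n \ \ge\ \frac{\pi(A_n|\nX)^{\alpha}}{Q_n(A_n)^{\alpha-1}}\ \longrightarrow\ +\infty \qquad (\alpha-1>0), \]
so $D_{\alpha}\to\infty$, contradicting finiteness in the limit. Hence a finite limiting divergence forces the weak limit of $q_n$ to place positive mass at $\theta_0$, i.e. to contain a Dirac delta at $\theta_0$, which is exactly the assertion.

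The main obstacle is the rate-matching inside the shrinking neighborhood: a \emph{fixed} ball $A$ is useless, since then $Q_n(A)\to Q(A)>0$ and the lower bound stays finite. The blow-up is produced entirely by the mismatch between the $\sqrt n$-contraction of the posterior and the vanishing $q_n$-mass on $A_n$, so the delicate point is verifying that $\epsilon_n$ can be chosen to shrink slower than $1/\sqrt n$ (keeping $\pi(A_n|\nX)\to1$) yet still to $0$ (forcing $Q_n(A_n)\to0$); both requirements are met by $\epsilon_n=n^{-1/4}$. A secondary technical point, suppressed above, is that the statement concerns any subsequential weak limit: one first extracts a weakly convergent subsequence (tightness being inherited from the variational family $\cQ$), and the conclusion is that every such subsequential limit must have an atom at $\theta_0$, which characterizes the limit as singular with a Dirac delta at $\theta_0$. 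A direct estimate substituting the LAN Gaussian approximation of $\pi(\theta|\nX)$ into $I_n$ would also succeed but is messier and requires controlling $q_n$ pointwise, which the coarsening bound neatly avoids.
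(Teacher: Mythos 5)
Your proof is correct, and it takes a genuinely different route from the paper's. The paper evaluates the divergence asymptotically: it re-parametrizes via the LAN expansion (Assumption~\ref{assume:lan}), completes the square to replace the likelihood ratio by a Gaussian $\mathcal{N}(\theta;\hat{\theta}_n,(nI(\theta_0))^{-1})$, approximates the normalizing constant with Lemma~\ref{lem:norm}, and applies the Laplace approximation (Lemma~\ref{lem:laplace}) to arrive at $\frac{\alpha-1}{\alpha}D_\alpha(\pi(\theta|\nX)\|q_n(\theta)) \sim \frac{\alpha-1}{2\alpha}\log n - \frac{\alpha-1}{\alpha}\log q(\hat{\theta}_n) + O(1)$, which diverges whenever the limit places finite density at $\theta_0$ --- precisely the ``messier direct estimate'' you mention and avoid. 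Your coarsening inequality is in fact the paper's Lemma~\ref{lem:hold} (proved there via H\"older and deployed only in the proof of Theorem~\ref{thm:degen}); your contribution is to repurpose it with shrinking balls $A_n$ of radius $n^{-1/4}$, pitting the $\sqrt{n}$-contraction of the posterior against the Portmanteau bound on $Q_n(A_n)$. What each buys: the paper's computation yields the quantitative $\tfrac{1}{2}\log n$ blow-up rate, and its intermediate displays are re-used almost verbatim in Proposition~\ref{prop:UBfin}, whereas your argument works purely at the level of measures and notably sidesteps the paper's step $q_n(\hat{\theta}_n)\sim q(\hat{\theta}_n)$, a pointwise convergence of densities that weak convergence does not imply. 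Two minor caveats: the contraction $\pi(A_n|\nX)\to 1$ on balls shrinking slower than $1/\sqrt{n}$ follows from LAN plus test functions (\cite[Theorem 10.1]{vdV00}, which the paper itself invokes) in $P_{\theta_0}$-probability, and the upgrade to an almost-sure statement deserves more care than either you or the paper gives it; and tightness of $\{q_n\}$ is not actually guaranteed by the paper's assumptions on $\cQ$, though your argument survives by passing to vague subsequential limits, since $\limsup_n Q_n(K)\le Q(K)$ still holds for compact $K$.
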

\vspace{0em}

The result above implies that the $\alpha-$R\'enyi approximate posterior must have a Dirac delta distribution component at $\theta_0$ in the limit; that is, it should converge in distribution to $\delta_{\theta_0}$ or a convex combination of $\delta_{\theta_0}$ with singular or non-singular distributions as $n \to \infty$. Next, we consider a sequence $\{q'_n(\theta)\} \subset \cQ$ that  converges weakly to a convex combination of $\delta_{\theta_0}$ and singular or non-singular distributions $q_i(\theta),~i \in \{1,2,\ldots \}$ such that for weights $\{w^i \in (0,1) : \sum_{i=1}^{\infty} w^i=1\},$
\begin{align}
q'_n(\theta) \Rightarrow w^j\delta_{\theta_0} +  \sum_{i=1, i\neq j}^{\infty} w^i q_i(\theta).
\label{eq:eq14i}
\end{align}
In the following result, we show that the $\alpha-$R\'enyi divergence between the true posterior and the sequence $\{q'_n(\theta)\}$ is bounded below by a positive number. 
\vspace{1em}
\begin{lemma}
	\label{lem:subopt1}
	Under Assumption~\ref{assume:prior}, the $\alpha-$R\'enyi divergence between the true posterior and the sequence $\{q'_n(\theta) \in \cQ\}$ is bounded away from zero; that is
	\[ \liminf_{n \to \infty} D_\alpha(\pi(\theta|\nX)\| q'_n(\theta)) \geq \eta > 0\quad P_{\theta_0}-a.s. \] 
\end{lemma}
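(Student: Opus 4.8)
The plan is to exploit the fact that $\alpha>1$ makes $x\mapsto x^\alpha$ convex, which turns a restriction of the divergence integral into a clean Jensen lower bound. Writing the objective as $D_\alpha(\pi(\theta|\nX)\|q'_n) = \tfrac{1}{\alpha-1}\log I_n$ with $I_n := \int_\Theta \pi(\theta|\nX)^\alpha\, q'_n(\theta)^{1-\alpha}\,d\theta$, and recalling that $\alpha-1>0$, it suffices to produce a constant $c>1$ with $\liminf_n I_n \ge c$; then $\liminf_n D_\alpha \ge \tfrac{1}{\alpha-1}\log c =: \eta>0$.

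First I would fix a ball $U = \{\theta : |\theta-\theta_0|\le r\}$ around the true parameter, with radius $r$ chosen small enough that (i) $\partial U$ is a continuity set of the limiting mixture $w^j\delta_{\theta_0}+\sum_{i\ne j} w^i q_i$, and (ii) the non-$\theta_0$ components carry strictly positive mass outside $U$. Point (ii) is legitimate because the decomposition in~\eqref{eq:eq14i} absorbs all atomic mass at $\theta_0$ into the $w^j\delta_{\theta_0}$ term, so none of the $q_i$, $i\ne j$, has an atom at $\theta_0$; hence $q_i(U)\to 0$ as $r\to 0$ for each $i$, and for small $r$ we obtain $w_r := w^j + \sum_{i\ne j} w^i q_i(U) < 1$. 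Two convergences then hold on the $P_{\theta_0}$-almost sure event on which posterior consistency~\eqref{eq:4} is valid: $\pi(U|\nX)\to 1$ (Assumption~\ref{assume:prior}(1)), while by weak convergence and the Portmanteau theorem $q'_n(U)\to w_r\in(0,1)$.

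Next I would discard the contribution from $\Theta\setminus U$ — legitimate since the integrand is nonnegative — and apply Jensen's inequality to the restricted, renormalized measure $\tilde q_n := q'_n/q'_n(U)$ on $U$. Because $x\mapsto x^\alpha$ is convex for $\alpha>1$,
\[
I_n \;\ge\; \int_U \left(\frac{\pi(\theta|\nX)}{q'_n(\theta)}\right)^{\!\alpha} q'_n(\theta)\,d\theta \;=\; q'_n(U)\,\mathbb{E}_{\tilde q_n}\!\left[\left(\tfrac{\pi}{q'_n}\right)^{\!\alpha}\right] \;\ge\; q'_n(U)\left(\frac{\pi(U|\nX)}{q'_n(U)}\right)^{\!\alpha} \;=\; \pi(U|\nX)^\alpha\, q'_n(U)^{1-\alpha}.
\]
Passing to the limit using $\pi(U|\nX)\to 1$, $q'_n(U)\to w_r$, and the continuity of $x\mapsto x^{1-\alpha}$ at $w_r>0$ gives $\liminf_n I_n \ge w_r^{1-\alpha}$; since $w_r<1$ and $1-\alpha<0$, this strictly exceeds $1$. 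Consequently $\liminf_n D_\alpha(\pi(\theta|\nX)\|q'_n) \ge -\log w_r =: \eta > 0$ $P_{\theta_0}$-a.s., which is the claim. I note that the restriction-plus-Jensen step is precisely the data-processing inequality for R\'enyi divergence~\cite{TErvan2012} applied to the binary partition $\{U,U^c\}$, and one may alternatively quote it directly.

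The main obstacle is the choice of $U$: I must guarantee that $q'_n(U)$ stays bounded away from $1$ in the limit, which is exactly the statement that the weight $w^j$ assigned to $\delta_{\theta_0}$ is strictly less than one and that the remaining mass $1-w^j$ genuinely sits away from $\theta_0$. This rests on the observation that no $q_i$ ($i\ne j$) hides an atom at $\theta_0$, together with the joint $P_{\theta_0}$-a.s. control of the random quantities $\pi(U|\nX)$ and $q'_n(U)$; the subsequent Jensen estimate and limit are routine.
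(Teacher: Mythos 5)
Your proof is correct, and it takes a genuinely different route from the paper's. The paper passes to the limit first: it invokes the lower semicontinuity of $D_\alpha$ in the weak topology (\cite[Theorem 19]{TErvan2012}) together with posterior consistency \eqref{eq:4} to reduce the claim to bounding $D_\alpha\bigl(\delta_{\theta_0} \,\big\|\, w^j\delta_{\theta_0} + \sum_{i\neq j} w^i q_i\bigr)$ from below, then applies Pinsker's inequality (valid here since $D_\alpha$ dominates the KL divergence for $\alpha>1$) and splits the resulting total-variation integral over $B(\theta_0,\epsilon)$ and its complement to extract $\eta(\epsilon)>0$. You instead establish a finite-$n$ quantitative bound — restriction to $U$ plus Jensen, i.e.\ the data-processing inequality for the two-cell partition $\{U,U^c\}$ — giving $D_\alpha(\pi(\cdot|\nX)\|q'_n) \geq \frac{1}{\alpha-1}\log\bigl(\pi(U|\nX)^\alpha\, q'_n(U)^{1-\alpha}\bigr)$, and then take limits via \eqref{eq:4} and the Portmanteau theorem; your care in choosing $r$ so that $\partial U$ is a null set of the limit mixture (possible for all but countably many radii) is exactly what Portmanteau requires. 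What your route buys: it is self-contained (you effectively re-derive by hand the one instance of lower semicontinuity the paper imports), it avoids the paper's slightly informal manipulation of $\delta_{\theta_0}$ as a density inside an $L_1$ integral, and it yields the explicit constant $\eta = -\log w_r$. What the paper's route buys: by reducing to a divergence between two fixed limit measures, it dispenses with any per-$n$ estimates and extends verbatim to the purely atomic refinement in Lemma~\ref{lem:subopt}, where the split total-variation computation gives the sharper-looking closed form $2(1-w^j)^2$. Note that both arguments rest on the same implicit non-degeneracy hypothesis — that the residual mass $1-w^j$ genuinely sits away from $\theta_0$: the paper's claim that $\eta(\epsilon)>0$ tacitly needs $\sum_{i\neq j} w^i q_i\bigl(B(\theta_0,\epsilon)^C\bigr)>0$, while you need $w_r<1$; your explicit normalization (no $q_i$, $i\neq j$, carries an atom at $\theta_0$, so $w_r \downarrow w^j < 1$ as $r \downarrow 0$) makes this assumption visible where the paper leaves it unstated.
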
  

We also show in Lemma~\ref{lem:subopt} in the appendix that if in~\eqref{eq:eq14i} the components $\{q_i(\theta) ~i \in \{1,2,\ldots \}\}$ are singular, then
with $w^j$ is the weight of $\delta_{\theta_0}$, we have 
\[ \liminf_{n \to \infty} D_\alpha(\pi(\theta|\nX)\| q'_n(\theta)) \geq 2 (1-w^j)^2 >0  \quad P_{\theta_0}-a.s. \]

A consistent sequence asymptotically achieves zero $\alpha-$R\'enyi divergence. To show its existence, we first provide an asymptotic upper-bound on the minimal $\alpha-$R\'enyi divergence in the next proposition. This, coupled with the previous two structural results, will allow us to prove the consistency of the minimizing sequence.
\vspace{.5em}
\begin{proposition}\label{prop:UBfin}
For a given $\a>1$ and under Assumptions~\ref{assume:prior},~\ref{assume:lan},~\ref{assume:var}, and~\ref{def:gsequence},
     for any~\good~$\bar q_n(\theta)$ there exist $n_0\geq 1$ \red{, $n_M \geq 1$,} and $\bar M > 0$ such that \red{$\bbE_{\bar q_n(\theta)}[|\theta - \hat \theta_n|^2] \leq \frac{\bar M}{n}$ for all $n \geq n_M$ and $\bar M I(\theta_0) \geq \frac{ \alpha^{\frac{1}{\a-1}}}{\bar e}$ } for all $n \geq n_0$, 	
    the minimal $\alpha-$R\'enyi divergence satisfies  
	\begin{align}
     \blue{\min_{q \in \mathcal Q}  D_{\alpha}(\pi(\theta|\nX) \|q(\theta)) \leq D_{\alpha}(\pi(\theta|\nX) \|\bar q_n(\theta)) \leq B = \frac{1}{2}\log\left(\frac{\bar e \bar M I(\theta_0)}{\alpha^{\frac{1}{\alpha-1}}}\right)+ o_{P_{\theta_0}}(1) ,} 
	\end{align}
    where $I(\theta_0)$  is defined in Assumption~\ref{assume:lan} and $\bar e$ is the Euler's constant.
\end{proposition}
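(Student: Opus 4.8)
The plan is to exploit that a \good~$\{\bar q_n\}$ is itself a feasible point of the minimization, so that $\min_{q\in\cQ} D_\alpha(\pi(\theta|\nX)\|q)\le D_\alpha(\pi(\theta|\nX)\|\bar q_n)$ for every $n$, reducing everything to controlling the right-hand side. The first claim in Part~1 is then immediate: by Assumption~\ref{def:gsequence}(1)--(2) the sequence $\{\bar q_n\}$ converges to $\delta_{\theta_0}$ at rate $\gamma_n=\sqrt n$ with mean exactly $\hat\theta_n$, so Definition~\ref{def:roc}(2) gives $\bbE_{\bar q_n}[|\theta-\hat\theta_n|^2]=O(1/n)$, i.e.\ there are $\bar M>0$ and $n_M$ with $\bbE_{\bar q_n}[|\theta-\hat\theta_n|^2]\le\bar M/n$ for $n\ge n_M$. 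I would defer the second inequality of Part~1 to the very end, deriving it from Part~2 together with the non-negativity of the R\'enyi divergence.

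For Part~2 I would work with the defining integral,
\[
\exp\big((\alpha-1)D_\alpha(\pi(\theta|\nX)\|\bar q_n)\big)=\int_\Theta \pi(\theta|\nX)^{\alpha}\,\bar q_n(\theta)^{1-\alpha}\,d\theta,
\]
and split it over the compact ball $K$ of Assumption~\ref{def:gsequence}(3) and its complement. On $\Theta\setminus K$ I would use $\pi(\theta|\nX)^\alpha\bar q_n^{1-\alpha}=(\pi(\theta|\nX)/\bar q_n)^{\alpha}\bar q_n\le M_r^{\alpha}\bar q_n$ from property~(3), and then the weak convergence $\bar q_n\Rightarrow\delta_{\theta_0}$ with $\theta_0\in\mathrm{int}(K)$ to conclude that $\int_{\Theta\setminus K}\bar q_n\,d\theta\to0$, so the tail contributes $o(1)$, $P_{\theta_0}$-a.s.

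The heart of the argument is the integral over $K$. Here I would invoke Assumption~\ref{assume:lan} (with $r_n=\sqrt n$) and the induced Bernstein--von Mises behaviour to replace $\pi(\theta|\nX)$ by the Gaussian density $\mathcal N(\hat\theta_n,(nI(\theta_0))^{-1})$ uniformly on $K$, keeping the LAN remainder as an $o_{P_{\theta_0}}(1)$ factor. Rescaling $h=\sqrt n(\theta-\hat\theta_n)$ removes all powers of $n$ and reduces the central piece to
\[
\Big(\tfrac{I(\theta_0)}{2\pi}\Big)^{\alpha/2}\int e^{-\frac{\alpha I(\theta_0)}{2}h^{2}}\,\tilde q_n(h)^{1-\alpha}\,dh\cdot(1+o_{P_{\theta_0}}(1)),
\]
where $\tilde q_n(h)=n^{-1/2}\bar q_n(\hat\theta_n+h/\sqrt n)$ is, by Assumption~\ref{def:gsequence}(4), a log-concave density centred at $0$ with second moment at most $\bar M$. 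Since $1-\alpha<0$, the integrand rewards $\tilde q_n$ being spread out, so I would bound this $n$-free functional using log-concavity together with the variance constraint $\bbE_{\tilde q_n}[h^2]\le\bar M$; the Gaussian maximum-entropy bound $-\int \tilde q_n\log\tilde q_n\le\frac12\log(2\pi \bar e\,\bar M)$ is where the Euler constant $\bar e$ enters. Collecting terms yields $\limsup_n D_\alpha(\pi(\theta|\nX)\|\bar q_n)\le B=\frac12\log\!\big(\bar e\,\bar M I(\theta_0)/\alpha^{1/(\alpha-1)}\big)$, which combined with $\min_{q\in\cQ}D_\alpha(\pi(\theta|\nX)\|q)\le D_\alpha(\pi(\theta|\nX)\|\bar q_n)$ proves Part~2. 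Finally, since $D_\alpha\ge0$ always, $0\le\liminf_n D_\alpha\le\limsup_n D_\alpha\le B$, so $B\ge0$, which is exactly the inequality $\bar M I(\theta_0)\ge\alpha^{1/(\alpha-1)}/\bar e$ asserted in Part~1.

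The main obstacle I anticipate is the $n$-free central bound: extracting the precise constant---in particular the roles of log-concavity and of $\bar e$---for a \emph{general} log-concave $\tilde q_n$ constrained only through its variance rather than a specific Gaussian, while the LAN approximation holds only uniformly on $K$ and only $P_{\theta_0}$-almost surely. Making the interchange of the limit with the rescaled integral rigorous, so that the $o_{P_{\theta_0}}(1)$ remainder does not interact badly with the negative power $\bar q_n^{1-\alpha}$ near the boundary of $K$, is the delicate technical point.
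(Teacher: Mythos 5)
Your proposal tracks the paper's route almost everywhere---feasibility of the good sequence, the split over $K$ and $\Theta\setminus K$ with the $M_r^{\alpha}$ tail bound and weak convergence, the LAN/Gaussian reduction on $K$, and deriving the inequality $\bar M I(\theta_0)\ge \alpha^{1/(\alpha-1)}/\bar e$ in Part 1 from Part 2 plus non-negativity of the divergence, exactly as the paper does---but the step you yourself flag as ``the main obstacle'' is a genuine gap, and the direct route you sketch for it cannot work as stated. First, the central piece is not an $n$-free functional: after rescaling $h=\sqrt n(\theta-\hat\theta_n)$, the domain $\{h:\hat\theta_n+h/\sqrt n\in K\}$ expands at rate $\sqrt n$, so you are in effect asking for a bound on $\int_{\mathbb{R}} e^{-\frac{\alpha I(\theta_0)}{2}h^2}\,\tilde q_n(h)^{1-\alpha}\,dh$ that is uniform over the class of log-concave, mean-zero densities with variance at most $\bar M$. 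No such uniform bound exists: take $\tilde q_n(h)\propto e^{-ch^4}$ (log-concave, variance normalizable to $\bar M$); then $\tilde q_n(h)^{1-\alpha}\sim e^{(\alpha-1)ch^4}$ outgrows the Gaussian damping and the functional is $+\infty$ on your class. The constraint of Assumption~\ref{def:gsequence}(3) only controls the tails of $\bar q_n$ \emph{outside} $K$, so nothing you carried into the rescaled problem excludes this pathology inside $K$; log-concavity plus a variance bound alone does not control the quantity you wrote down.

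The paper closes this by never bounding the integral uniformly over a class. It first applies the Laplace approximation (Lemma~\ref{lem:laplace}) to collapse $\frac{1}{\alpha}\log\int_K \bar q_n(\theta)^{1-\alpha}\pi(\theta)^{\alpha}\,\mathcal{N}(\theta;\hat\theta_n,(n\alpha I(\theta_0))^{-1})\,d\theta$ to the \emph{point evaluation} $\frac{1-\alpha}{\alpha}\log\bar q_n(\hat\theta_n)+\log\pi(\hat\theta_n)$, with the matching $-\log\pi(\hat\theta_n)$ supplied by the normalizer via Lemma~\ref{lem:norm}. The missing link in your sketch is then Jensen's inequality \emph{at the mean}: by Assumption~\ref{def:gsequence}(2) the mean of $\bar q_n$ is exactly $\hat\theta_n$, and by \ref{def:gsequence}(4) $\log\bar q_n$ is concave, so $\log\bar q_n(\hat\theta_n)\ge \int \bar q_n\log\bar q_n\,d\theta$; since $\frac{1-\alpha}{\alpha}<0$ this converts the point value into the differential entropy, and only then does the Gaussian maximum-entropy bound of Lemma~\ref{lem:entropy}, $-\int\bar q_n\log\bar q_n\,d\theta \le \frac12\log\left(2\pi\bar e\,\bar M/n\right)$, apply---with the resulting $-\frac{\alpha-1}{2\alpha}\log n$ cancelling the $+\frac{\alpha-1}{2\alpha}\log n$ from the Gaussian-power normalization and leaving precisely $B=\frac12\log\left(\bar e\,\bar M I(\theta_0)/\alpha^{1/(\alpha-1)}\right)$. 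So your named ingredients (log-concavity, variance, Euler's constant via max-entropy) are the right ones, but the operative chain is ``collapse to the value at the mean, then Jensen, then entropy,'' not a direct bound on the rescaled integral; this is also where the requirement that the mean of $\bar q_n$ \emph{equal} $\hat\theta_n$---rather than merely that $\bar q_n$ concentrate near $\theta_0$---is used irreplaceably.
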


Now Proposition~\ref{prop:UBfin}, Lemma~\ref{lem:ndegen}, and Lemma~\ref{lem:subopt1} allow us to prove our main result that the $\alpha-$R\'enyi approximate posterior converges weakly to $\delta_{\theta_0}$.    
\vspace{0.5em} 
\begin{theorem}\label{thm:consistency}
	Under Assumptions~\ref{assume:prior},~\ref{assume:lan},~\ref{assume:var}, and~\ref{def:gsequence}, the $\alpha-$R\'enyi approximate posterior $q^*_n(\theta)$ converges weakly to a Dirac delta  distribution at the true parameter $\theta_0$; that is, 
	\[\blue{q^*_n  \Rightarrow \delta_{\theta_0}\ ~\text{in-}P_{\theta_0} \text{ probability as $n \to \infty$.} }\]
\end{theorem}
\vspace{-3em}
\blue{
    \begin{proof}
        First, we argue that there always exists a sequence $\{\tilde{q}_n(\theta)\} \subset \cQ$ such that for every $\eta>0$
        \[ \lim_{n \to \infty } P_{\theta_0} \left( D_\alpha(\pi(\theta|\nX)\| \tilde{q}_n(\theta)) \leq \eta \right)=1. \]
        We demonstrate the existence of $\tilde{q}_n(\theta)$ by construction. Recall from Proposition~\ref{prop:UBfin}(2) that there exist $ 0 < \bar M < \infty$ and $n_0\geq 1$, such that for all $n \geq n_0$  
        \begin{align*}
        D_{\alpha}(\pi(\theta|\nX)  \|\bar  q_n(\theta)) &\leq  \frac{1}{2} \log \frac{\bar e \bar M I(\theta_0)}{ \a^{\frac{1}{\a-1}} }\blue{+ o_{P_{\theta_0}}(1)},
        \end{align*}
        where  $\bar  q_n(\theta)$ is the \good\ as defined in Assumption~\ref{def:gsequence} and $\bar e$ is the Euler's constant. 
        Now using the definition of $o_{P_{\theta_0}}(1)$, for every $\eta>0$, it follows from the inequality above that
        \begin{align}
        \lim_{n \to \infty } P_{\theta_0} \left( D_{\alpha}(\pi(\theta|\nX)  \|\bar  q_n(\theta)) -  \frac{1}{2} \log \frac{\bar e \bar M I(\theta_0)}{ \a^{\frac{1}{\a-1}}}   > \eta \right) \leq \lim_{n \to \infty } P_{\theta_0} \left( o_{P_{\theta_0}}(1) > \eta \right) =0.
        \end{align}
        Now a specific good sequence can be chosen by fixing \( \bar M = \tilde M :=  \frac{\a^{\frac{1}{\a-1}}}{\bar e I(\theta_0)}  \), implying that
        \begin{align}
        \lim_{n \to \infty } P_{\theta_0} \left( D_{\alpha}(\pi(\theta|\nX)  \|\tilde  q_n(\theta))   > \eta \right)  = 0.
        \label{eq:eqthm1}
        \end{align}
        The above result implies that there exist a sequence in family $\cQ$ such that $D_{\alpha}(\pi(\theta|\nX)  \|\tilde  q_n(\theta)) \to 0$ in $P_{\theta_0}$ -probability.\\  
        Next, we will show that the minimizing sequence must converge to a Dirac delta distribution in probability. The previous result shows that the minimizing sequence must have zero $\alpha$-R\'enyi divergence in the limit. Lemma~\ref{lem:ndegen} shows that the minimizing sequence must have a delta at $\theta_0$, since otherwise the $\alpha$-R\'enyi divergence is unbounded. Similarly, Lemma~\ref{lem:subopt1} shows that it cannot be a mixture of such a delta with other components, since otherwise the $\alpha$-R\'enyi divergence is bounded away from zero.  
        \\ 
        Therefore, it follows that the $\alpha-$R\'enyi approximate posterior $q^*_n(\theta)$ must converge weakly to a Dirac delta distribution at the true parameter $\theta_0,$ in $-P_{\theta_0}$ probability, thereby completing the proof. 
    \end{proof}
}

Note that the choice of $\bar M$ in the proof essentially determines the variance of the good sequence. As noted before, the asymptotic log-concavity of the good sequence implies that it is eventually an entropy maximizing sequence of distributions~\citep{grechuk2009maximum}. It does not necessarily follow that the sequence is Gaussian, however. If such a choice can be made (i.e., the variational family contains Gaussian distributions) then the choice of good sequence amounts to matching the entropy of a Gaussian distribution with variance $\frac{\alpha^{\frac{1}{\alpha-1}}}{\bar e I(\theta_0)}$. 

 We further characterize the rate of convergence of the $\alpha-$R\'enyi approximate posterior under additional regularity conditions. In particular, we establish an upper bound on the rate of convergence of the possible candidate $\alpha-$R\'enyi approximators \blue{ when the variational family is sub-Gaussian.  Additionally, we require that the posterior distribution satisfies the Bernstein-von Mises Theorem, that is for any compact set $K$ containing $\theta_0$
     \begin{align} 
     \int_{K} \pi(\theta|\nX) d\theta  =   \int_{K} \mathcal{N}(\theta;\hat \theta_n, (n I(\theta_0))^{-1}) d\theta + o_{P_{\theta_0}}(1). 
     \label{eq:BVM}
     \end{align}
      According to Theorem 10.1 in~\cite{vdV00}, the Bernstein-von Mises Theorem holds under Assumption~\ref{assume:prior},~\ref{assume:lan}, and the following additional assumption on the existence of consistent test functions:
 \vspace{1em}
 \begin{assumption}[Consistent Tests]\label{assume:test}
     For every $\e>0$ there exists a sequence  of tests $\phi_n(\nX)$ such that $i) \lim_{n \to \infty } \bbE_{P_{\theta_0}}(\phi_n(\nX))= 0$, and $ \lim_{n \to \infty } \sup_{\|\theta-\theta_0\|\geq  \e} \bbE_{P_{\theta_0}}(1-\phi_n(\nX)) = 0$.
     \end{assumption}
 } \red{The LAN condition with the existence of test functions \cite[Theorem 10.1]{vdV00} guarantees that the Bernstein-von Mises Theorem holds for the posterior distribution.} A further modeling assumption is  to choose a \blue{sub-Gaussian} variational family $\cQ$ that limits the variance.  \blue{We choose a sub-Gaussian sequence of distributions $\{q_n(\theta)\} \subset \cQ$}, that is for some positive constant $B$ and any $t\in \mathbb{R}$,
\begin{align}
 \bbE_{  q_n(\theta)} [e^{t\theta}] \leq e^{\tilde{\theta}_n t + \frac{B}{2\gamma_n^2} t^2 },
 \label{eq:SGseq}
 \end{align}
where $\tilde\theta_n$ is the mean of $q_n(\theta)$ and $\gamma_n$ is the rate (see Definition~\ref{def:roc}) at which $q_n(\theta)$ converges weakly to a Dirac delta distribution as $n \to \infty$.
\vspace{1em}
\begin{lemma}\label{thm:degen}
	Consider a sequence of sub-Gaussian distributions $\{q_n(\theta)\} \subset \cQ$, with parameters $B$ and $t$, that converges weakly to some Dirac delta distribution faster than the posterior converges weakly to $\delta_{\theta_0}$ (that is, $\gamma_n> \sqrt{n}$),
	and suppose the true posterior distribution satisfies the Bernstein-von Mises Theorem~\eqref{eq:BVM}. Then, there exists an $n_0 \geq 1$ such that the $\alpha-$R\'enyi divergence
	$D_{\a}(\pi(\theta|\nX)\| q_n(\theta)) $
	is infinite for all $n > n_0$.
\end{lemma} 

\blue{We use the above result to show that, when the variational family $\cQ$ is sub-Gaussian, then the $\alpha-$R\'enyi appropriate posterior cannot converge at a rate $\gamma_n$ faster than $\sqrt{n}$, that is the rate at which the posterior converges weakly to $\delta_{\theta_0}$.
\vspace{1em}
\begin{theorem}\label{thm:roc}
    Under Assumptions~\ref{assume:prior},~\ref{assume:lan},~\ref{assume:var},~\ref{def:gsequence}, and \ref{assume:test}, and $\cQ$ is a family of sub-Gaussian distribution, then the rate of convergence, $\gamma_n$, of $\alpha-$R\'enyi approximate posterior is bounded above by $\sqrt{n}$, that is $\gamma_n \leq \sqrt{n}$.  
    \end{theorem}
\begin{proof}
Since we choose the variational family to be sub-Gaussian, the $\alpha-$R\'enyi approximate posterior must be one of the sequences satisfying~\eqref{eq:SGseq} and as a consequence of Theorem~\ref{thm:consistency}, $\tilde\theta_n$ must converge to $\theta_0$ as $n\to \infty$. On the other hand, using Lemma~\ref{thm:degen}, it follows that the rate of convergence $\gamma_n$ of $\alpha-$R\'enyi approximate posterior  must be bounded above by $\sqrt{n}$, that is $\gamma_n \leq  \sqrt{n}$.    
    \end{proof}
}

\section{Consistency of \red{Idealized EP-} \blue{$\alpha$-R\'enyi} Approximate Posterior \blue{as $\alpha \to 1$ }}~\label{sec:asymptoteEP}
Our results on the consistency of $\alpha$-R\'enyi variational approximators in Section~\ref{sec:asymptote} \red{imply} \blue{can be a step forward in understanding }the consistency of posterior approximations obtained using expectation propogation (EP) \citep{Mi2001a,Mi2001b}. Observe that for any $n\geq1$, as $\alpha \to 1$,
\begin{eqnarray}
{D_{\alpha}} \left( \pi(\theta| \mathbf X_n)\| \tilde{q}(\theta) \right) \to \scKL \left( \pi(\theta| \mathbf X_n)\| \tilde{q}(\theta) \right),
\end{eqnarray}
where the limit is the EP objective using \scKL~divergence. We define the \red{EP} \blue{1-R\'enyi}-approximate posterior $s^*_n$ as the distribution in  the variational family $\cQ$ that minimizes the \scKL\ divergence between $\pi(\theta|\nX)$ and $\tilde{s}(\theta)$, where $\tilde{s}(\theta)$ is an element of $\cQ$:
\begin{eqnarray}
s^*_n(\theta) := \text{argmin}_{\tilde{s} \in \mathcal{Q}} 
\left\{\scKL \left( \pi(\theta| \mathbf X_n)\| \tilde{s}(\theta) \right):=  \int_{\Theta} \pi(\theta| \mathbf X_n)   \log\left(\frac{\pi(\theta| \mathbf X_n)}{\tilde{s}(\theta) } \right) d\theta \right\} . \label{eq:ep_opt2}
\end{eqnarray}
We note that the EP algorithm~\citep{Mi2001a} is a message-passing algorithm that optimizes an approximations to this objective~\citep{wainwright2008graphical}. Nevertheless, understanding this idealized objective is an important step towards understanding the actual EP algorithm. Furthermore, ideas from~\cite{LiTu2016} can be used to construct alternate algorithms that directly minimize~\eqref{eq:ep_opt2}. We thus focus on this objective, and show that under the assumptions in Section~\ref{sec:b-VB}, the \red{EP} \blue{1-R\'enyi}-approximate posterior is asymptotically consistent as the sample size increases, in the  sense that \(s_n^* \Rightarrow \delta_{\theta_0}, \ \text{ in-}P_{\theta_0} \) probability as $n \to \infty$. The proofs in this section are corollaries of the results in the previous section. 

Recall that the KL divergence lower-bounds the $\alpha-$R\'enyi divergence when $\alpha>1$; that is
\begin{align}
	 \scKL \left( p(\theta)\| q(\theta) \right) \leq D_{\alpha}\left( p(\theta)\| q(\theta) \right). 
	\label{eq:DKL}
	\end{align}
This is a direct consequence of Jensen's inequality. Analogous to Proposition~\ref{prop:UBfin}, we first show that the minimal \scKL \ divergence between the true Bayesian posterior and the variational family $\cQ$ is asymptotically bounded.

\vspace{.5em}
\begin{proposition}\label{prop:UBfinEP}
	For a given $\a>1$, and under Assumptions~\ref{assume:prior},~\ref{assume:lan},~\ref{assume:var},~\ref{def:gsequence}, and for any~\good~$\bar q_n(\theta)$ there exist $n_0\geq 1$ and $\bar M > 0$ such that the minimal \scKL \  divergence satisfies  
    \blue{
	\begin{align}
	 \min_{\tilde s \in \mathcal Q} \scKL \left( \pi(\theta| \mathbf X_n)\| \tilde{s}(\theta) \right) < B = \frac{1}{2}\log\left(\frac{\bar e \bar M I(\theta_0)}{\alpha^{\frac{1}{\alpha-1}}}\right)+ o_{P_{\theta_0}}(1) .
	\end{align}
    where $I(\theta_0)$  is defined in Assumption~\ref{assume:lan} and $\bar e$ is the Euler's constant.
}
\end{proposition}
\begin{proof}
	The result follows immediately from   Proposition~\ref{prop:UBfin} and~\eqref{eq:DKL}, since for any $\tilde s(\theta) \in \cQ$ and $\a > 1$,
\begin{align*}
\scKL \left( \pi(\theta| \mathbf X_n)\| \tilde{s}(\theta) \right) \leq    D_{\alpha}\left( \pi(\theta| \mathbf X_n)\| \tilde{s}(\theta) \right).
\end{align*}	
	\end{proof}
Next, we demonstrate that any sequence of distributions $\{s_n(\theta)\} \subset \cQ$ that converges weakly to a distribution $s(\theta) \in \sQ$ with positive probability outside the true parameter $\theta_0$ cannot achieve zero \scKL\ divergence in the limit. Observe that this result is weaker than Lemma~\ref{lem:ndegen}, and does not show that the KL divergence is necessarily infinite in the limit. This loses some structural insight. 
\vspace{0.5em}
\begin{lemma}
	\label{lem:subopt1ep}
	There exists an $\eta>0$ in the extended real line such that the \scKL\ divergence between the true posterior and sequence $\{s_n(\theta)\}$ is bounded away from zero; that is,
	\[ \liminf_{n \to \infty} \scKL(\pi(\theta|\nX)\| s_n(\theta)) \geq   \eta >0 \quad P_{\theta_0}-a.s. \] 
\end{lemma}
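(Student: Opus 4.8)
The plan is to reduce the claim to a lower bound on total variation distance via Pinsker's inequality, exactly as in the Rényi case of Lemma~\ref{lem:subopt1} (which produced the bound $2(1-w^j)^2$). The key observation is that the true posterior concentrates at $\theta_0$ while, by hypothesis, the weak limit $s(\theta)$ of $\{s_n(\theta)\}$ places positive mass away from $\theta_0$; these two facts force the distributions apart in total variation, and Pinsker then transfers this separation to the \scKL\ divergence. Note that, in contrast to the Rényi argument, here the bound follows directly since the \scKL\ divergence \emph{is} the quantity being bounded, with no need to invoke monotonicity in $\alpha$.

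First I would fix a set on which the two families disagree. Since $s \neq \delta_{\theta_0}$, its mass off $\theta_0$ equals $1 - s(\{\theta_0\}) > 0$; writing $\Theta\setminus\{\theta_0\}$ as the increasing union $\bigcup_k \{|\theta - \theta_0| > 1/k\}$ and using continuity from below, I can pick $\epsilon > 0$ so that the open set $A := \{|\theta - \theta_0| > \epsilon\}$ satisfies $c := s(A) > 0$. Because $A$ is open, the Portmanteau theorem applied to $s_n \Rightarrow s$ gives $\liminf_{n} s_n(A) \geq s(A) = c$. On the other hand, Assumption~\ref{assume:prior} together with the LAN condition yields the posterior consistency statement~\eqref{eq:4}, so that $\pi(A|\nX) \to 0$ as $n\to\infty$, $P_{\theta_0}$-a.s.

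Next I would combine these. By the definition of total variation distance, $d_{TV}(\pi(\cdot|\nX), s_n) \geq |s_n(A) - \pi(A|\nX)|$, and taking $\liminf$ gives $\liminf_n d_{TV}(\pi(\cdot|\nX), s_n) \geq c > 0$, $P_{\theta_0}$-a.s. Pinsker's inequality, $\scKL(P\|Q) \geq 2\, d_{TV}(P,Q)^2$, then yields
\[ \liminf_{n\to\infty} \scKL(\pi(\theta|\nX)\| s_n(\theta)) \geq 2\Big(\liminf_{n\to\infty} d_{TV}(\pi(\cdot|\nX), s_n)\Big)^2 \geq 2c^2 =: \eta > 0 \quad P_{\theta_0}-a.s., \]
where the first inequality uses that $x \mapsto x^2$ is continuous and nondecreasing on $[0,\infty)$ together with the nonnegativity of the total variation distance. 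This establishes the claim with $\eta = 2c^2$. Should $\scKL(\pi(\cdot|\nX)\|s_n)$ be infinite for some $n$ — as happens when $s_n$ fails to dominate the posterior — the inequality only becomes easier, which is why the statement is phrased on the extended real line.

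The main obstacle here is bookkeeping rather than conceptual: I must choose the separating set $A$ to be open (so that Portmanteau applies in the favorable direction) yet bounded away from $\theta_0$ (so that posterior consistency drives $\pi(A|\nX)$ to zero), and I must ensure that the weak convergence of $\{s_n\}$ and the $P_{\theta_0}$-a.s. concentration of the posterior are combined on the same probability-one event. As the text already notes, unlike Lemma~\ref{lem:ndegen} this argument does not show the divergence is infinite, only that it is bounded away from zero, which is precisely what is needed for the downstream consistency conclusion.
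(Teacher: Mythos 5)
Your proof is correct, but it takes a genuinely different route from the paper. The paper's proof invokes the lower semi-continuity of the \scKL\ divergence in the weak topology (citing Posner's theorem), which together with $\pi(\theta|\nX) \Rightarrow \delta_{\theta_0}$ $P_{\theta_0}$-a.s.\ reduces the problem to bounding the \emph{limit} quantity $\scKL(\delta_{\theta_0}\|s)$; Pinsker's inequality is then applied to the pair $(\delta_{\theta_0}, s)$, and the total variation integral is split over a ball $B(\theta_0,\epsilon)$ and its complement to extract $\eta(\epsilon) = \frac{1}{2}\left(\int_{B(\theta_0,\epsilon)^C} s(\theta)\, d\theta\right)^2 > 0$. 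You instead work entirely at finite $n$: you apply Pinsker to the pair $(\pi(\cdot|\nX), s_n)$, lower-bound the total variation via the single open set $A = \{|\theta-\theta_0|>\epsilon\}$, and drive the two masses apart using Portmanteau ($\liminf_n s_n(A) \geq s(A) = c$) on one side and posterior consistency~\eqref{eq:4} ($\pi(A|\nX)\to 0$ a.s.) on the other. The paper's approach buys uniformity of machinery — the same lower semi-continuity argument (via van Erven--Harrem\"oes for R\'enyi) powers Lemmas~\ref{lem:subopt1} and~\ref{lem:subopt}, so the EP case is a one-line variation — while your approach is more elementary and self-contained: it needs no semi-continuity theorem, it sidesteps the paper's somewhat informal manipulation of Dirac-delta ``densities'' inside the $L_1$ integral by working with measures of sets, and it yields the explicit (and in fact slightly sharper, by the usual factor-of-four TV convention) constant $\eta = 2c^2$. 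Your bookkeeping is sound on the two points where this route could slip: $A$ is open so Portmanteau applies in the right direction, the choice of $\epsilon$ with $s(A)>0$ follows from continuity from below given the hypothesis that $s$ places positive mass off $\theta_0$, and the interchange $\liminf_n 2\,d_{TV}^2 = 2(\liminf_n d_{TV})^2$ is justified since $x\mapsto 2x^2$ is continuous and increasing on the bounded range $[0,1]$; the intersection of the two probability-one events is likewise handled correctly.
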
  

Now using Proposition~\ref{prop:UBfinEP} and Lemma~\ref{lem:subopt1ep} we  show that the \red{EP} \blue{1-R\'enyi}-approximate posterior converges weakly to the $\delta_{\theta_0}$. 
\vspace{0.5em} 
\begin{theorem}\label{thm:consistencyEP}
	Under Assumptions~\ref{assume:prior},~\ref{assume:lan},~\ref{assume:var}, and~\ref{def:gsequence}, the \red{EP} \blue{1-R\'enyi}-approximate posterior $s^*_n(\theta)$ satisfies\blue{
	\[s^*_n  \Rightarrow \delta_{\theta_0} \quad ~\text{in-}P_{\theta_0} \text{ probability as $n \to \infty$}.\]}
\end{theorem}
\vspace{-2.5em}
\blue{
\begin{proof}
	Recall~\eqref{eq:eqthm1} from the proof of Theorem~\ref{thm:consistency} that there exists a good sequence $\tilde{q}_n(\theta)$, such that
	\begin{align*}
	 D_{\alpha}(\pi(\theta|\nX)  \|\tilde q_n(\theta)) \to  0 ~\text{in-}P_{\theta_0} \text{ probability as $n \to \infty$}. 
	\label{eq:eqthm2}
	\end{align*}
	Since the \scKL\ divergence is always non-negative, using~\eqref{eq:DKL} it follows that
	\begin{align}
	\nonumber
    \scKL (\pi(\theta|\nX)  \|\tilde q_n(\theta)) \to 0 ~\text{in-}P_{\theta_0} \text{ probability as $n \to \infty$}. 
	\end{align}
	Consequently, the sequence of  \blue{1-R\'enyi}-approximate posteriors must also achieve zero \scKL\ divergence from the true posterior in the large sample limit with high probability. 
	Finally, as demonstrated in Lemma~\ref{lem:subopt1ep}, any other sequence of  distribution that converges weakly to a distribution, that has positive probability at any point other that $\theta_0$ cannot achieve zero \scKL\ divergence. Therefore, it follows that the \blue{1-R\'enyi}-approximate posterior $s^*_n(\theta)$ must converge weakly to a Dirac delta distribution at the true parameter $\theta_0$, $\text{in-}P_{\theta_0} \text{ probability as $n \to \infty$}$, thereby completing the proof. 
\end{proof}}

\section{Models with Local Latent Parameters}\label{sec:latent}
We generalize the model we have worked with so far to include a collection of $n$ independent local latent variables $z_{1:n}:=\{z_1,z_2,\ldots,z_n\} \in \sZ^n$, one for each observation $\xi_i$. We assume these are distributed as {$\pi(z_i|\theta)$} for each  $i$, with the observations distributed as $p(\xi_i|z_i,\theta)$.  
Recall that $\theta$ is the global latent  variable with prior distribution $\pi(\theta)$. 
Denote by $z_0$ and $\theta_0$ the true local and global latent parameters respectively. For brevity we denote the model $P_{\theta_0,z_0}$ as $P_0$. The posterior distribution over $\theta$ and $\nZ$ is defined as
\[ \pi(\theta, \nZ| \mathbf X_n):= \frac{\pi(\theta)\prod_{i=1}^{n} {\pi(z_i|\theta)} p(\xi_i|z_i,\theta) }{ \int\int  \pi(\theta)\prod_{i=1}^{n}{\pi(z_i|\theta)} p(\xi_i|z_i,\theta) d\theta d\nZ }.  \] 
We denote the denominator above as $P(\nX)$, the model \textit{evidence}, and the numerator  as $p(\theta,\nX,\nZ)$. Since computing $P(\nX)$ is difficult, an approximate posterior can be 
obtained  by  minimizing the following objective over an appropriately chosen variational family $\cQ$:
\begin{align*}
 D_{\alpha} \left( \pi(\theta, \nZ| \mathbf X_n)\| {q}(\theta,  \nZ) \right):= \frac{1}{\alpha-1} 
\log \int_{\Theta \times \sZ^n} {q}(\theta, \nZ  )   \left(\frac{\pi(\theta, \nZ| \mathbf X_n)}{{q}(\theta, \nZ) } \right)^{\alpha} d\theta d\nZ,~\text{where $\alpha>1$}.
\end{align*}
This objective can be derived as an upper-bound to the model evidence similar to~\eqref{eq:renyi}.
{It is common to assume} that the variational family $\cQ$ factorizes into components $\cQ^n$ (over local variables) and $\bar \cQ$ (over $\theta$). 
Define the R\'enyi approximate posterior over the global parameter $\theta$ as
\begin{align}
\qnvr &:=  \argmin_{q(\theta)\in \bar \cQ} \min_{  q(\nZ)\in \cQ^n} \log \int_{\Theta \times \sZ^n} q(\theta)q(\nZ)   \left(\frac{p(\theta, \nZ, \mathbf X_n)}{q(\theta)q(\nZ) } \right)^{\alpha} d\theta d\nZ.
\label{eq:eqV0}
\end{align}
In this section, we aim to show that $\qnvr$ converges weakly to the Dirac delta distribution at $\theta_0$. To show this we require some additional assumptions.
%
%
First, define the profile likelihood at $\theta= \theta_0+ n^{-1/2}h_n$ for any bounded and stochastic $h_n= O_{P_0}(1)$ as $p(\nX|\theta_0+ n^{-1/2}h_n,\nZ^p)$, where $\nZ^p= \argmax_{\nZ}~p(\nX|\theta_0+ n^{-1/2}h_n,\nZ)$ is the maximum  profile likelihood estimate of $\nZ$ at $\theta= \theta_0+ n^{-1/2}h_n$. Denote $d_{H}(\nZ,\nZ^p):= H(P_{\theta_0,\nZ},P_{\theta_0,\nZ^p})$ as the Helinger distance between models $P_{\theta_0,\nZ}$ and $P_{\theta_0,\nZ^p}$ 
Furthermore, for any $\rho>0$ and for all bounded and stochastic $h_n= O_{P_0}(1)$, define $D(\theta_0+ n^{-1/2}h_n,\rho) = \{\nZ: d_{H}(\nZ,\nZ^p) < \rho  \}$ as the Hellinger ball of radius $\rho$ around $\nZ^p$.

Next we impose regularity conditions on the conditioned posterior $p(z_{1:n}|\nX,\theta_0)$. 
The assumption below follows~\citet[Proposition 10]{WaBl2017}, and is motivated by~\citet[Theorem 4.2]{bickel2012semiparametric}. 
\vspace{1em}
\begin{assumption}[Conditioned latent posterior]\label{assume:clp}
    The conditioned latent posterior $p(z_{1:n}|\nX,\theta_0)$  satisfies
    \begin{enumerate} 
        \item 
        
        The conditioned latent posterior is consistent under $n^{-1/2}$-perturbation at some rate $\rho_n$ with $\rho_n \downarrow 0 $ and $n\rho_n^2 \to \infty$, that is, for all bounded, stochastic $h_n= O_{P_{0}}(1) $, $p(z_{1:n}|\nX,\theta_0)$ converges as
        \[ \int_{D^c(\theta_0+ n^{-1/2}h_n,\rho_n)} p(z_{1:n}|\nX,\theta = \theta_0+ n^{-1/2}h_n ) d\nZ = o_{P_{0}}(1) .\]

        \item The sequence $\{\rho_n\}$ as defined above should also satisfy the following  conditions for all bounded and stochastic $h_n= O_{P_{0}}(1) $:
        \begin{align*}
        &(i)~\sup_{ \nZ\in  \{\nZ: d_{H}(\nZ,\nZ^p) < \rho_n  \} } \bbE_{P_{\theta_0,\nZ}}\left[\frac{p(\nX|\nZ,\theta_0+n^{-1/2}h_n )}{p(\nX|\nZ,\theta_0)}\right] = O(1),
        &(ii)~ d_{H}(z_0,\nZ^p)= o(\rho_n). 
        \end{align*}
    \end{enumerate}
\end{assumption}
 The first condition  ensures that conditioned latent posterior converges slower than the true posterior and the second condition is an additional regularity condition on the expected likelihood ratio.~\citet[Lemma 4.3]{bickel2012semiparametric} identifies mild differentiablity conditions on the likelihood ratio that imply condition 2(i) above. Also, Theorem  3.1 in~\citet{bickel2012semiparametric} provide the regularity conditions under which the the conditioned latent posterior satisfies the first condition above.

The next assumption, adapted from~\citet{bickel2012semiparametric}, is an extension of LAN condition in Assumption~\ref{assume:lan} to models with both global and local latent parameters. 
\vspace{1em}
\begin{assumption}[Stochastic LAN (s-LAN)]\label{assump:SLAN}
    Fix $\theta_0 \in \Theta$ and recall that $\nZ^p$ is the profile likelihood maximizer. The sequence of log-likelihood functions $\{ P^n_{\theta_0,\nZ^p}:= p(\nX|\theta_0,\nZ^p)\} $ satisfies \emph{stochastic local asymptotic normality (s-LAN)} condition if there exists a matrix $I(\theta_0,z_0)$ and a sequence of random vectors $\{\Delta_{n,(\theta_0,z_0)}\} \in L_2(P^n_{\theta_0,\nZ})$ such that for every bounded and stochastic sequence $\{h_n\}$, that is $h_n= O_{P_{0}}(1) $, we have 
    \[
    \log \frac{P^n_{\theta_0 + n^{-1/2} h_n,\nZ^p}}{P^n_{\theta_0 ,\nZ^p}} =  h_n^T I(\theta_0,z_0)
    \Delta_{n,(\theta_0,z_0)} - \frac{1}{2} h_n^T I(\theta_0,z_0)h_n  + o_{P_0}(1) ,
    \]
    where $P_0= P_{\theta_0,z_0}$. 
\end{assumption}
Stochastic LAN is slightly stronger than the usual LAN property. In most of the  examples, the ordinary LAN property often extends to stochastic LAN without significant difficulties~\citep{bickel2012semiparametric}. Also, Theorem 1 in~\citet{Murphy2000} identifies conditions under which the above LAN assumption  is satisfied by models with both global and local latent variables. It must be noted that if $\hat \theta_n$ is an asymptotically efficient estimator of $\theta_0$, then according to Lemma 25.25 in~\cite{vdV00} 
\( \sqrt{n} \left(\hat \theta_n -\theta_0\right) = 
\Delta_{n,(\theta_0,z_0)} + o_{P_0}(1). \)

Next we state a modified version of Assumption~\ref{def:gsequence}(3) for the models that contain local latent variables:
\vspace{1em}
\begin{assumption}[Good Sequence-Local]\label{def:gsequenceL}
    For any $\bar M >0$, the variational family $\bar \cQ$ contains a sequence of distributions $\{\bar{q}_n(\theta)\}$ with the following properties:
    \begin{enumerate}
        \item[(1)] there exists $n_1 \geq 1$ such that $\int_{\Theta}\theta \bar q_n(\theta) d\theta = \hat{\theta}_n $, where $\hat{\theta}_n$ is the maximum likelihood estimate, for each $n\geq n_1$, 
        \item[(2)] there exists $n_{\bar{M}}\geq 1$ such that the rate of convergence is $\gamma_n = \sqrt{n}$, that is \( E_{\bar q_n(\theta)}[|\theta - \hat \theta_n|^2] \leq  \frac{\bar M}{\gamma_n^2}  \) for each $n\geq n_{\bar M}$,
        \item[(3)] there exist a compact ball $K \subset  \Theta$ containing the true parameter $\theta_0$ and $n_2\geq1$, such that the sequence of Radon-Nikodym derivatives of the Bayes posterior density with respect to the sequence $\{\bar{q}_n\}$ exists and is bounded above by a finite positive constant $M_r$ outside of $K$ for all $n \geq n_2$ ; that is,\[\frac{  \ \pi(  \theta | \nX,\nZ^0)}{  \bar{q}_n(\theta)} \leq M_r, \ \forall \theta \in \Theta \backslash K \text{ and } \forall n\geq n_2, \quad P_{\theta_0}-a.s, \]
        where $\nZ^0$ is the first $n$ components of the true local latent parameter $z_0$. 
        \item[(4)] there exists $n_3\geq 1$ such that the good sequence $\{\bar q_n(\theta)\}$ is log-concave in $\theta$ for all $n \geq n_3$.
    \end{enumerate} 
\end{assumption}
\vspace{1em}
\begin{example}[Bayesian mixture model]
    Consider a mixture of uncorrelated $L$ univariate Gaussians, each with mean $\mu_i, i\in\{1,2,\ldots, L\}$ and unit variance. Each observation $X_i$ is assumed to be generated using the following model:
    \begin{align*}
    &\mu_l \sim \pi, \forall l\in \{1,2,
    \ldots,L\}
    \\
    & z_i \sim \text{Categorical}\left(\frac{1}{L},\frac{1}{L},\ldots,\frac{1}{L}\right), \forall i\in \{1,2,
    \ldots,n\}
    \\
    & X_i \sim \mathcal{N}(z_i^T \pmb \mu, 1) \forall i\in \{1,2,
    \ldots,n\}
    \end{align*} 
    Notice that $\pmb \mu$ is the global and $\nZ$ are the local latent parameters. Now observe that 
    \begin{align}
    \nonumber
    \pi(  \pmb \mu | \nX,\nZ^0) = \frac{\prod_{l=1}^{L}\pi(\mu_l) \prod_{i=1}^{n}p(z_i^0,X_i|\pmb \mu)}{\int \prod_{l=1}^{L}\pi(\mu_l) \prod_{i=1}^{n}p(z_i^0,X_i|\pmb \mu) d\pmb \mu} &= \frac{\prod_{l=1}^{L}\pi(\mu_l) \prod_{i=1}^{n}p(X_i|\pmb \mu,z_i^0)}{\int \prod_{l=1}^{L}\pi(\mu_l) \prod_{i=1}^{n}p(X_i|\pmb \mu,z_i^0) d\pmb \mu}\\
    &= \frac{\prod_{l=1}^{L}\pi(\mu_l) \prod_{i=1}^{n}\mathcal{N}(X_i|\pmb \mu^{T}z_i^0,1)}{\int \prod_{l=1}^{L}\pi(\mu_l) \prod_{i=1}^{n}\mathcal{N}(X_i|\pmb \mu^{T}z_i^0,1) d\pmb \mu}
    \\
    &= \frac{\prod_{l=1}^{L}\left[\pi(\mu_l) \prod_{j=1}^{n_l}\mathcal{N}(X_j^l|\mu_l,1)\right]}{\int \prod_{l=1}^{L}\pi(\mu_l) \prod_{j=1}^{n_l}\mathcal{N}(X_j^l|\mu_l,1) d\pmb \mu},
    \label{eq:eqa7}
    \end{align}
    where $X_j^l$ is the $j^{th}$ observation in the  $l^{th}$ cluster and $n_l= \sum_{i=1}^{n} z_{i,l}^0 $ is the total number of observations in the  $l^{th}$ cluster. 
  In practice, $\pi(\mu_l) =  \mathcal{N}(\mu_l|m,\sigma^2)\}$ is assumed to be a conjugate Gaussian with known mean $m$ and variance $\sigma^2$. 
  In this case, the distribution in~\eqref{eq:eqa7} can be computed analytically, that is 
    \begin{align*}
    \pi(  \mu | \nX,\nZ^0) = \frac{\prod_{l=1}^{L}\pi(\mu_l) \prod_{i=1}^{n}p(z_i^0,X_i|\mu)}{\int \prod_{l=1}^{L}\pi(\mu_l) \prod_{i=1}^{n}p(z_i^0,X_i|\mu) d\mu} &= \prod_{l=1}^{L} \mathcal{N}\left(\mu_l\bigg|\frac{1}{\frac{1}{\sigma^2}+{n_l}}\left( \frac{m}{\sigma^2}+\sum_{j=1}^{n_l}X_j^l\right) , \left(\frac{1}{\sigma^2}+n_l\right)^{-1}\right).
    \end{align*}
    In practice $\bar \cQ$ is chosen to be a mean-field approximate family, viz.\ a product of $L$ univariate Gaussians. 
    Now consider the following sequence of distributions in $\bar \cQ$
    \[q_n(\mu) = \prod_{l=1}^{L} \mathcal{N}\left(\mu_l|m_{n,l},\sigma^2_{n,l}\right) .\]  
    Choosing $m_{n,l}= \frac{1}{\frac{1}{\sigma^2}+{n_l}}\left( \frac{m}{\sigma^2}+\sum_{j=1}^{n_l}X_j^l\right) $ and $\sigma^2_{n,l}= \left(\frac{1}{\sigma^2}+n_l\right)^{-1}$, the ratio $\frac{\pi(  \mu | \nX,\nZ^0)}{  \bar{q}_n(\theta)} $ is bounded by $1$.
    
    The s-LAN assumption for finite mixtures model follows from the finiteness of the support of local latent variables~\citep{Murphy1996,Murphy2000}. 
\end{example}


In the next result we show that a consistent sequence asymptotically achieves zero $\alpha-$R\'enyi divergence. To show its existence, we first provide an asymptotic upper-bound on the minimum of the LHS in~\eqref{eq:eqa5} in the next proposition. This will allow us to prove the consistency of the minimizing sequence.
\vspace{1em}
\begin{proposition}\label{prop:UBfinL}
    For a given $\a>1$ and under Assumptions~\ref{assume:prior},~\ref{assume:var} (for $\bar \cQ$),~\ref{assume:clp},~\ref{assump:SLAN}, ~\ref{def:gsequenceL}, and for any~\good~there exist $n_0\geq 1$ and $\bar M > 0$ such that for all $n \geq n_0$, the minimal $\alpha-$R\'enyi divergence satisfies  
    \begin{align}
    \nonumber
    \min_{q \in \bar\cQ} \min_{q(z_{1:n})\in \cQ^n }  D_{\alpha}(\pi(\theta,z_{1:n}|\nX) \|q(\theta)q(z_{1:n}))&\leq \min_{q(z_{1:n})\in \cQ^n }  D_{\alpha}(\pi(\theta,z_{1:n}|\nX) \|\bar q_n(\theta)q(z_{1:n}))  \\
    &\leq B = \frac{1}{2}\log\left(\frac{\bar e \bar M I(\theta_0,z_0)}{\alpha^{\frac{1}{\alpha-1}}}\right) + o_{P_0}(1)
    \label{eq:eq21}
    \end{align}
    where $\bar e$ is the Euler's constant and $I(\theta_0,z_0)$ is as defined in Assumption~\ref{assump:SLAN}.
\end{proposition}

Since the term on the RHS above in~\eqref{eq:eq21} is non-negative for all $n\geq n_0$, implying that  \( \bar M \geq  \frac{\a^{\frac{1}{\a-1}}}{\bar e I(\theta_0,z_0)}  \)  for all $n \geq n_0$.\
Therefore, a specific good sequence can be chosen by fixing \( \tilde M =  \frac{\a^{\frac{1}{\a-1}}}{\bar e I(\theta_0,z_0)}  \), implying that
\(
\limsup_{n \to \infty} \min_{q(z_{1:n})\in \cQ^n }  D_{\alpha}(\pi(\theta,z_{1:n}|\nX) \|\tilde  q_n(\theta)q(z_{1:n})) = 0  ~ \forall n \geq n_0.
\) Now analogous to the parametric case we are only left to show that the global R\'enyi approximator necessarily converges to a Dirac delta distribution concentrated at the true global parameter $\theta_0$ to achieve zero R\'enyi divergence.


Now notice that for any $n\geq 1$, 
\begin{align}
\nonumber
\begin{split}
\min_{q(\nZ)\in \cQ^n} &\log \int_{\Theta }  {q}(\theta) \left(\frac{\pi(\theta)}{q(\theta) } \right)^{\alpha}  \int_{ \sZ^n} q(\nZ) \left(\frac{p( \nZ, \mathbf X_n|\theta)}{q(\nZ) } \right)^{\alpha}  d\nZ  d\theta 
\\
&\geq \log \int_{\Theta }  {q}(\theta) \left(\frac{\pi(\theta)}{q(\theta) } \right)^{\alpha} \min_{q(\nZ)\in \cQ^n}  \int_{ \sZ^n} q(\nZ) \left(\frac{p( \nZ, \mathbf X_n|\theta)}{q(\nZ) } \right)^{\alpha}  d\nZ  d\theta
\end{split}
\\
&= \log \int_{\Theta }  {q}(\theta) \left(\frac{\pi(\theta)M(\nX|\theta)}{q(\theta) } \right)^{\alpha}   d\theta,
\label{eq:eqa4}
\end{align}

where $M(\nX|\theta)$ is the variational likelihood define as 
\begin{align}
M(\nX|\theta):= \left[\min_{q(\nZ)\in \cQ^n} \int_{ \sZ^n} q(\nZ) \left(\frac{p( \nZ, \mathbf X_n|\theta)}{q(\nZ) } \right)^{\alpha}  d\nZ \right]^{1/\alpha}.
\label{eq:VL}
\end{align}
Observe that subtracting the $\log P(\nX)^\alpha$ from either side of~\eqref{eq:eqa4} yields:
\begin{align}
\min_{q(z_{1:n})\in \cQ^n }  D_{\alpha}(\pi(\theta,z_{1:n}|\nX) \|q(\theta)q(z_{1:n})) \geq D_{\alpha}(\pi^*(\theta|\nX) \|q(\theta)),
\label{eq:eqa5}
\end{align}
where the ideal posterior $\pi^*(\theta| \mathbf X_n)$ is defined as 
\begin{align}
\pi^*(\theta| \mathbf X_n)  := \frac{\pi(\theta)M(\nX|\theta)}{\int \pi(\theta)M(\nX|\theta) d\theta}.
\end{align}


%
In the subsequent lemma we show that  under certain regularity conditions  $M(\nX|\theta)$ satisfies the LAN condition with the similar expansion as of the true likelihood model for a given local latent parameter $z_0$. 
The proof parallels that of 
~\citet[Proposition 10]{WaBl2017}.
\vspace{1em}
\begin{lemma}\label{lem:LAN}
    Fix $\theta \in \Theta$. Under Assumptions~\ref{assume:clp} and ~\ref{assump:SLAN}, the sequence of variational log-likelihood functions $\{ M_n(\theta):=\log M(\nX|\theta)$ satisfies \emph{s-LAN} condition, that is
there exists a matrix $I(\theta_0,z_0)$ and a sequence of random vectors $\{\Delta_{n,(\theta_0,z_0)}\}$ as defined in Assumption~\ref{assump:SLAN}, such that for every bounded and stochastic sequence $\{h_n\}$, that is $h_n= O_{P_{0}}(1) $, we have 
    \[
    \log \frac{M_n(\theta_0 + n^{-1/2} h_n)}{M_n(\theta_0)} =  h_n^T I(\theta_0,z_0)
    \Delta_{n,(\theta_0,z_0)} - \frac{1}{2} h_n^T I(\theta_0,z_0)h_n  + o_{P_0}(1).
    \]
\end{lemma}

Next, we will show that the minimizing sequence must converge to a Dirac delta distribution at $\theta_0$ using the  results in Proposition~\ref{prop:UBfinL} and Lemma~\ref{lem:LAN}.
\vspace{1em} 
\begin{theorem}\label{thm:consistencyL}
    For a given $\a>1$ and under Assumptions~\ref{assume:prior},~\ref{assume:var} (for $\bar \cQ$) ,~\ref{assume:clp}, and~\ref{def:gsequenceL}, the $\alpha-$R\'enyi approximate posterior $q^*_n(\theta)$ over global latent parameters $\theta$ as defined in~\eqref{eq:eqV0} converges weakly to a Dirac delta  distribution at the true parameter $\theta_0$; that is, 
    \[q^*_n(\theta)  \Rightarrow \delta_{\theta_0}\ \text{ in}~P_{0}-\text{probability as $n \to \infty$.} \]
\end{theorem}
\begin{proof}
Using the result in Proposition~\ref{prop:UBfinL} and following similar steps as used in Theorem~\ref{thm:consistency}, we can show that the minimizing sequence must have zero $\alpha$-R\'enyi divergence in the limit with high probability. 
Recall the inequality in~\eqref{eq:eqa5}
\begin{align}
\min_{q(z_{1:n})\in \cQ^n }  D_{\alpha}(\pi(\theta,z_{1:n}|\nX) \|q(\theta)q(z_{1:n})) \geq D_{\alpha}(\pi^*(\theta|\nX) \|q(\theta)).
\label{eq:eqa5a}
\end{align}
Also note that $q^*_n(\theta)$ is the minimizer of the LHS in the equation above.
Since the variational likelihood satisfies the LAN condition due to Lemma~\ref{lem:LAN}, under the consistent testability assumption, the ideal posterior $\pi^*(\theta|\nX) $ also degenerates to a Dirac delta distribution at the true parameter $\theta_0$~\citep{Kleijn2012}. 

Now recall Lemma~\ref{lem:ndegen} and~\ref{lem:subopt1}. Following the arguments in Lemma~\ref{lem:ndegen}, and  using the inequality in~\eqref{eq:eqa5a} we can argue that any sequence of distributions in $\bar \cQ$ that minimizes the LHS in~\eqref{eq:eqa5a} must converge weakly to a  Dirac delta distribution at the true parameter $\theta_0$ in the large sample limit, since otherwise the objective in the LHS of~\eqref{eq:eqa5a} is unbounded. In addition, using Lemma~\ref{lem:subopt1} and the inequality in~\eqref{eq:eqa5a} we can also show that any sequence of distribution in $\bar \cQ$ that converges weakly to a convex combination of a Dirac delta distribution at $\theta_0$ with any other distribution can not achieve zero $\alpha-$R\'enyi divergence in the limit. This completes the proof.
\end{proof}


\section*{Acknowledgments}
This research is supported by the National Science Foundation (NSF) through awards DMS/1812197 and IIS/1816499, and the Purdue Research Foundation (PRF). 


\appendix
\section{Proofs}

\subsection{Proofs in Section~\ref{sec:asymptote}}
We begin with the following well known result.
\vspace{0.5em}
\begin{lemma}\label{lem:laplace}[Laplace Approximation] 
	Consider an integral of the form 
	\[I = \int_{a}^{b} h(y)e^{-ng(y)} dy,\]
	where $g(y)$ is a smooth function which has a local minimum at $y^* \in (a,b)$ and $h(y)$ is a smooth function. Then
	\[I \sim h(y^*) e^{-n g(y^*)} \sqrt{\frac{2\pi}{n g''(y^*)}} \text{ as $n \to \infty$} . \]
\end{lemma}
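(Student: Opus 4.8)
The plan is to carry out the standard Laplace-method argument: localize the integral to a shrinking neighborhood of the minimizer $y^*$, Taylor-expand the exponent there, and reduce the resulting integral to a Gaussian. Since $y^*$ is an interior local minimum of the smooth function $g$, we have $g'(y^*)=0$, and the non-degeneracy that makes the stated formula meaningful is $g''(y^*)>0$. First I would factor out the dominant term by writing $I = e^{-ng(y^*)}\int_a^b h(y)\,e^{-n(g(y)-g(y^*))}\,dy$, so that the remaining integrand is governed by the nonnegative function $g(y)-g(y^*)$, which vanishes only at $y^*$ near the minimizer.

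Next I would split the domain into a small interval $J_\delta := (y^*-\delta,\, y^*+\delta)$ and its complement. On the complement, $g(y)-g(y^*)$ stays bounded below by some constant $c(\delta)>0$ (this is where $y^*$ being the dominant minimizer enters), and since $h$ is bounded on the compact interval $[a,b]$, the contribution there is $O(e^{-n c(\delta)})$, which is exponentially negligible against the $n^{-1/2}$ scale of the main term. Inside $J_\delta$ I would use the Taylor expansions $g(y)-g(y^*)=\tfrac12 g''(y^*)(y-y^*)^2 + O(|y-y^*|^3)$ and $h(y)=h(y^*)+O(|y-y^*|)$, and then rescale via $u=\sqrt{n}\,(y-y^*)$. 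This turns the local integral into $\tfrac{1}{\sqrt n}\int h\!\left(y^*+u/\sqrt n\right) e^{-\frac12 g''(y^*)u^2 + O(|u|^3/\sqrt n)}\,du$ over $|u|<\sqrt n\,\delta$.

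Letting $n\to\infty$, the cubic remainder in the exponent and the $O(n^{-1/2})$ correction to $h$ both vanish pointwise, the limits extend to $\pm\infty$, and a dominated-convergence argument yields the Gaussian integral $h(y^*)\int_{-\infty}^{\infty} e^{-\frac12 g''(y^*)u^2}\,du = h(y^*)\sqrt{2\pi/g''(y^*)}$. Reassembling the pieces then gives $I \sim h(y^*)\,e^{-ng(y^*)}\,\sqrt{2\pi/(n\,g''(y^*))}$, as claimed.

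The main obstacle is making the localization and the passage to the limit fully rigorous: one must check that the tail outside $J_\delta$ is genuinely exponentially smaller than the leading $n^{-1/2}$ term, and that the cubic Taylor remainder does not spoil convergence after rescaling. A clean way to handle the latter is to choose $\delta$ small enough that the cubic remainder is dominated by a fraction of the quadratic term, say $|g(y)-g(y^*)-\tfrac12 g''(y^*)(y-y^*)^2| \le \tfrac14 g''(y^*)(y-y^*)^2$ on $J_\delta$. This furnishes an integrable Gaussian majorant $e^{-\frac14 g''(y^*)u^2}$ that is uniform in $n$, legitimizing the dominated-convergence step and pinning down the constant.
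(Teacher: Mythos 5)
Your argument is correct, but it is worth noting that the paper does not actually prove this lemma at all: its ``proof'' is a one-line pointer to Chapter 2 of Wong's \emph{Asymptotic Approximations of Integrals} \cite{WONG198955}, where the result is established via a change of variables reducing the integral to a form amenable to Watson's lemma. Your proposal supplies a self-contained version of the classical argument --- factor out $e^{-ng(y^*)}$, localize to $J_\delta$, bound the tail by $O(e^{-nc(\delta)})$, rescale by $u=\sqrt{n}(y-y^*)$, and pass to the limit by dominated convergence with the Gaussian majorant $e^{-\frac{1}{4}g''(y^*)u^2}$ --- and all the steps check out: the quarter-quadratic bound follows from continuity of $g''$ (e.g.\ via the Lagrange form of the second-order Taylor remainder, so smoothness is more than enough), and the exponentially small tail is indeed negligible against the $n^{-1/2}$ main term. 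What the citation route buys the paper is brevity; what your route buys is transparency about the hypotheses, and here you correctly (if only parenthetically) identify that the lemma as literally stated is too weak: a mere \emph{local} minimum does not suffice, since the tail bound $g(y)-g(y^*)\ge c(\delta)$ off $J_\delta$ requires $y^*$ to be the unique global minimizer of $g$ on $[a,b]$, and one also needs the non-degeneracy $g''(y^*)>0$ (and $h(y^*)\neq 0$ for the stated asymptotic equivalence $\sim$ to be meaningful). These implicit strengthenings are harmless in the paper's applications, where the exponent is an essentially quadratic form peaked at $\hat{\theta}_n$, but your proof makes visible exactly where they are used.
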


\begin{proof}
	Readers are directed to \citet[Chapter-2]{WONG198955} for the proof.
\end{proof}

Now we prove a technical lemma that bounds the differential entropy of the good sequence.
	\begin{lemma}\label{lem:entropy}
		For a good sequence $\bar q_n(\theta)$, there exist an $n_M\geq 1$ and $\bar M >0 $, such that for all $n \geq n_M$
		\[ - \int \bar{q}_n(\mu) \log \bar{q}_n(\mu)  \leq  \frac{1}{2}\log \left(2\pi \bar e \frac{\bar M}{n} \right),  \]	
	where $\bar e$ is the Euler's constant.
	\end{lemma}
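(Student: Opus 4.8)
The plan is to bound the differential entropy of $\bar q_n$ by that of the Gaussian with the same mean and variance, invoke the maximum-entropy characterization of the Gaussian, and then control the variance through the convergence-rate assumption.

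First I would extract the variance bound directly from the good-sequence assumptions. By Assumption~\ref{def:gsequence}(1) the sequence converges at rate $\gamma_n = \sqrt n$, and by Assumption~\ref{def:gsequence}(2) its mean equals $\hat\theta_n$ for all $n \ge n_1$. Hence Definition~\ref{def:roc}(2) gives
\[
\sigma_n^2 := \mathbb{E}_{\bar q_n(\theta)}\big[(\theta-\hat\theta_n)^2\big] = O(1/n),
\]
so that there exist $\bar M > 0$ and $n_M \ge 1$ with $\sigma_n^2 \le \bar M/n$ for all $n \ge n_M$. Note that this step relies only on the rate assumption and not on Proposition~\ref{prop:UBfin}, which avoids any circularity (since that proposition is itself proved using this lemma).

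Next I would apply the maximum-entropy property. Let $g_n(\cdot) := \mathcal{N}(\cdot;\hat\theta_n,\sigma_n^2)$ be the Gaussian with the same mean and variance as $\bar q_n$. Since $g_n$ has full support, $\bar q_n \ll g_n$ and $\mathrm{KL}(\bar q_n\|g_n) \ge 0$ is well defined. Expanding the relative entropy,
\[
0 \le \mathrm{KL}(\bar q_n\|g_n) = \int \bar q_n(\theta)\log \bar q_n(\theta)\,d\theta - \int \bar q_n(\theta)\log g_n(\theta)\,d\theta,
\]
and since $\log g_n(\theta) = -\tfrac12\log(2\pi\sigma_n^2) - \tfrac{(\theta-\hat\theta_n)^2}{2\sigma_n^2}$, the cross term evaluates, using $\mathbb{E}_{\bar q_n}[(\theta-\hat\theta_n)^2]=\sigma_n^2$, to $-\tfrac12\log(2\pi\bar e\,\sigma_n^2)$. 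Rearranging yields the standard bound
\[
-\int \bar q_n(\theta)\log \bar q_n(\theta)\,d\theta \le \tfrac12\log(2\pi\bar e\,\sigma_n^2).
\]
Combining this with $\sigma_n^2 \le \bar M/n$ and the monotonicity of the logarithm gives, for all $n \ge n_M$,
\[
-\int \bar q_n(\theta)\log \bar q_n(\theta)\,d\theta \le \tfrac12\log\!\Big(2\pi\bar e\,\tfrac{\bar M}{n}\Big),
\]
which is exactly the claimed inequality.

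The main point requiring care is the legitimacy of the relative-entropy identity, namely that both the differential entropy $\int \bar q_n\log \bar q_n$ and the cross term $\int \bar q_n \log g_n$ are finite and well defined. Finiteness of the variance already forces the cross term to be finite, and the log-concavity in Assumption~\ref{def:gsequence}(4) rules out pathologies in $\bar q_n$ itself, ensuring the differential entropy is well defined; this is essentially the only place log-concavity is invoked here. Everything else is a routine consequence of the Gaussian maximum-entropy inequality.
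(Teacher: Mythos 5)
Your proof is correct and follows essentially the same route as the paper: extract $\mathbb{E}_{\bar q_n}[(\theta-\hat\theta_n)^2]\leq \bar M/n$ from Definition~\ref{def:roc} together with Assumption~\ref{def:gsequence}(1)--(2), then bound the differential entropy by that of the matching Gaussian. The only difference is cosmetic: the paper simply cites the Gaussian maximum-entropy theorem (\cite[Theorem 9.6.5]{CoverTM2006}), whereas you inline its standard proof via nonnegativity of $\mathrm{KL}(\bar q_n\|g_n)$; note also that the paper's proof does not invoke log-concavity at all (the inequality holds trivially if the entropy is $-\infty$), so your appeal to Assumption~\ref{def:gsequence}(4) is unnecessary, though harmless.
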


\begin{proof}
	Recall from Assumption~\ref{def:gsequence} that the $\bar q_n(\theta)$ converges weakly to $\delta_{\theta_0}$ at the rate of  $\sqrt{n}$. It follows from the Definition~\ref{def:roc} for rate of convergence that,
	\[E_{\bar q_n(\theta)}[|\theta - \hat \theta_n|^2] = O\left (\frac{1}{n} \right). \]  

	There exist an $n_M\geq 1$ and $\bar M > 0 $, such that for all $n \geq n_M$
	\[  \bbE_{\bar q_n(\theta)} [( \theta - \hat{\theta}_n)^2 ] \leq  \frac{\bar M}{n}. \] 
	Using the fact that, the differential entropy  of random variable with a given variance is bounded by the differential entropy of the Gausian distribution of the same variance \cite[Theorem 9.6.5]{CoverTM2006}),
	it follows that the differential entropy of $\bar{q}_{n}(\mu)$ is bounded by $\frac{1}{2}\log(2\pi \bar e \frac{\bar M}{n}) $, where $\bar e$ is the Euler's constant.
\end{proof}

    Next, we prove the following result on the prior distributions. This result will be useful in proving Lemma~\ref{lem:norm} and~\ref{lem:ndegen}.
\vspace{0.5em}
\begin{lemma}\label{lem:prior}
	Given a prior distribution  $\pi(\theta)$ with $\bbE_{ \pi(\theta)}[|\theta|]< \infty$, for any $\beta>0$, there exists a sequence of compact sets $\{K_n\} \subset \Theta$ such that 
	\[\int_{\Theta  \backslash K_n }  \pi(\gamma) d\gamma = O(n^{-\beta}). \]
\end{lemma}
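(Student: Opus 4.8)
The plan is to reduce the statement to a tail bound for the prior obtained from its finite first absolute moment, and then to take the sets $K_n$ to be (essentially) balls whose radii grow like a power of $n$. The only hypothesis at hand is Assumption~\ref{assume:prior}(2), namely $\mathbb{E}_{\pi(\theta)}[|\theta|] =: C < \infty$, so the natural device is Markov's inequality applied to the nonnegative quantity $|\theta|$ under the prior.

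Concretely, I would fix $\beta > 0$ and set $R_n := n^{\beta}$. Markov's inequality gives
\[ \int_{\{|\gamma| > R_n\} \cap \Theta} \pi(\gamma)\, d\gamma \leq \frac{\mathbb{E}_{\pi(\theta)}[|\theta|]}{R_n} = C\, n^{-\beta}. \]
If $\Theta$ is closed, I would simply take $K_n := \{\gamma \in \Theta : |\gamma| \leq R_n\}$, which is a closed and bounded, hence compact, subset of $\mathbb{R}^d$ contained in $\Theta$; its complement in $\Theta$ is exactly $\{|\gamma| > R_n\} \cap \Theta$, so the displayed bound yields $\int_{\Theta \setminus K_n} \pi(\gamma)\, d\gamma \leq C n^{-\beta} = O(n^{-\beta})$, as required. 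Note also that $R_n$ is increasing, so the $K_n$ can be taken nested if desired.

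The one point needing care is compactness when $\Theta$ is not closed, since then $\{\gamma \in \Theta : |\gamma| \leq R_n\}$ need not be compact. Here I would invoke inner regularity: because $\pi$ admits a Lebesgue density, the restriction of $\pi$ to the finite-measure set $B_n := \{|\gamma| \leq R_n\} \cap \Theta$ is a finite Borel measure, so there is a compact $K_n \subseteq B_n$ with $\int_{B_n \setminus K_n} \pi(\gamma)\, d\gamma \leq n^{-\beta}$. Splitting the complement as $\Theta \setminus K_n = (\{|\gamma| > R_n\} \cap \Theta) \cup (B_n \setminus K_n)$ and adding the two bounds gives
\[ \int_{\Theta \setminus K_n} \pi(\gamma)\, d\gamma \leq C\, n^{-\beta} + n^{-\beta} = O(n^{-\beta}). \]
I do not anticipate any real obstacle: the content is entirely Markov's inequality together with the growth rate $R_n = n^{\beta}$ chosen to match the prescribed exponent, and inner regularity enters only to guarantee compactness of $K_n$ in full generality.
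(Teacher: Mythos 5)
Your proof is correct and takes essentially the same route as the paper's: Markov's inequality applied to $|\theta|$ under the prior, with $K_n$ the intersection of $\Theta$ with a ball of radius $n^{\beta}$ (the paper centers the ball at an arbitrary fixed $\theta_1 \in \Theta$ and then uses the triangle inequality to reduce to $\bbE_{\pi(\theta)}[|\theta|] < \infty$, whereas you center at the origin, which changes nothing of substance). Your additional inner-regularity step to secure compactness when $\Theta$ is not closed is a point the paper silently glosses over --- it simply declares its sets $K_n$ compact --- so this is extra rigor on your part rather than a deviation.
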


\begin{proof}
	Fix $\theta_1 \in \Theta$. Define a sequence of compact sets $$K_n = \{\theta \in \Theta : |\theta - 
	\theta_1| \leq n^{{\beta}} \} \forall \beta>0.$$ Clearly, as $n$ increases $K_n$ approaches $\Theta$. Now, using Markov's inequality followed by the triangule inequality,  
	\begin{align}
	\nonumber
	\int_{\Theta  \backslash K_n }  \pi(\gamma) d\gamma = \int_{\{\gamma \in \Theta : |\gamma - 
		\theta_1| > n^{{\beta}} \} } \pi(\gamma) d\gamma &\leq n^{-\beta} \bbE_{ \pi(\theta)}[|\gamma - 
	\theta_1|]
	\\ & \leq n^{-\beta} \left( \bbE_{ \pi(\theta)}[|\gamma|] + |\theta_1| \right).
	\end{align}  
	Since, $ \bbE_{ \pi(\gamma)}[|\gamma|] < \infty$, it follows that $\forall \beta>0 $,
	\(\int_{\Theta  \backslash K_n }  \pi(\gamma) d\gamma = O(n^{-\beta}) .\)
\end{proof}

The next result approximates the normalizing sequence of the posterior distribution using the lemma above and the LAN condition. 
\vspace{1em}
\begin{lemma}\label{lem:norm}
	There exists a sequence of compact balls $\{K_n \subset \Theta\}$, such that $\theta_0 \in K_n$ and under Assumptions~\ref{assume:prior} and~\ref{assume:lan}, the normalizing sequence of the posterior distribution
	\begin{align}
	\nonumber
	\int_{\Theta} &  \prod_{i=1}^n \frac{p(X_i|\gamma)}{p(X_i|\theta_0)}   \pi(\gamma) d\gamma 
	\\
	=&   \sqrt{\frac{2\pi}{nI(\theta_0)}} e^{\left( \frac{1}{2} n I(\theta_0) \left( (\hat{\theta}_n - \theta_0 )^2 \right) \right)}    \bigg( e^{o_{P_{\theta_0}}(1)} \int_{K_n} \pi(\gamma) \mathcal{N}(\gamma;\hat{\theta}_n,(n I(\theta_0))^{-1}) d\gamma + o(1) \bigg).
	\label{eq:eqp6a}
	\end{align}
	\end{lemma}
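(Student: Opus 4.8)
The plan is to establish the identity via a Laplace-type expansion of the (unnormalized) marginal likelihood, splitting the parameter space into a bulk region around $\theta_0$, where the LAN expansion of Assumption~\ref{assume:lan} applies, and a tail region whose contribution I will show to be asymptotically negligible relative to the leading term. Write $Z_n := \int_\Theta L_n(\gamma)\pi(\gamma)\,d\gamma$ with $L_n(\gamma) := \prod_{i=1}^n \frac{p(X_i|\gamma)}{p(X_i|\theta_0)}$, so that the left-hand side of~\eqref{eq:eqp6a} is exactly $Z_n$. Using Lemma~\ref{lem:prior} with any fixed $\beta > \tfrac12$, I fix a sequence of compact balls $\{K_n\}$ centred at $\theta_0$ for which $\int_{\Theta\setminus K_n}\pi(\gamma)\,d\gamma = O(n^{-\beta})$, and I decompose $Z_n = A_n + B_n$ with $A_n = \int_{K_n}L_n\pi$ and $B_n = \int_{\Theta\setminus K_n}L_n\pi$.

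For the tail $B_n$ I would exploit that the maximum likelihood estimate $\hat\theta_n$ maximizes $L_n$, so $L_n(\gamma)\le L_n(\hat\theta_n)$ for every $\gamma$. Evaluating the LAN expansion at $h=\Delta_{n,\theta_0}=\sqrt n(\hat\theta_n-\theta_0)$ gives $L_n(\hat\theta_n) = \exp\!\big(\tfrac12 n I(\theta_0)(\hat\theta_n-\theta_0)^2 + o_{P_{\theta_0}}(1)\big)$, whence
\[
B_n \le L_n(\hat\theta_n)\int_{\Theta\setminus K_n}\pi(\gamma)\,d\gamma = e^{\frac12 n I(\theta_0)(\hat\theta_n-\theta_0)^2 + o_{P_{\theta_0}}(1)}\,O(n^{-\beta}).
\]
Dividing by the claimed prefactor $\sqrt{2\pi/(nI(\theta_0))}\,e^{\frac12 nI(\theta_0)(\hat\theta_n-\theta_0)^2}$ leaves $O(n^{1/2-\beta})e^{o_{P_{\theta_0}}(1)}$, which tends to $0$ in $P_{\theta_0}$-probability since $\beta>\tfrac12$; this is exactly the $o_{P_{\theta_0}}(1)$ remainder appearing inside the parentheses of~\eqref{eq:eqp6a}.

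For the bulk $A_n$ I would reparametrize by $h=\sqrt n(\gamma-\theta_0)$ and invoke the LAN expansion to write $L_n(\gamma) = \exp\!\big(h I(\theta_0)\Delta_{n,\theta_0} - \tfrac12 h^2 I(\theta_0) + o_{P_{\theta_0}}(1)\big)$; completing the square, exactly as in the derivation preceding~\eqref{eq:eq13}, turns the exponent into $\tfrac12 n I(\theta_0)(\hat\theta_n-\theta_0)^2 - \tfrac12 n I(\theta_0)(\gamma-\hat\theta_n)^2$, and recognizing the second factor as $\sqrt{2\pi/(nI(\theta_0))}\,\mathcal N(\gamma;\hat\theta_n,(nI(\theta_0))^{-1})$ yields
\[
A_n = e^{o_{P_{\theta_0}}(1)}\sqrt{\tfrac{2\pi}{nI(\theta_0)}}\,e^{\frac12 nI(\theta_0)(\hat\theta_n-\theta_0)^2}\int_{K_n}\pi(\gamma)\,\mathcal N(\gamma;\hat\theta_n,(nI(\theta_0))^{-1})\,d\gamma.
\]
Adding $A_n$ and $B_n$ and factoring out the common prefactor gives precisely~\eqref{eq:eqp6a}.

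The main obstacle is that Assumption~\ref{assume:lan} supplies the LAN expansion uniformly only over sets that are compact in the $h$-coordinate, i.e.\ $\theta$-balls of radius $O(n^{-1/2})$, whereas $K_n$ must be allowed to grow so that Lemma~\ref{lem:prior} controls the prior tail and so that $\int_{K_n}\pi\,\mathcal N$ eventually captures essentially all of the Gaussian mass. I would bridge this gap by splitting the bulk once more: on the fixed $h$-compact $\{|h|\le M\}$ the expansion applies directly, while on the intermediate shell $K_n\cap\{|h|>M\}$ I would bound the contribution using a uniform second-order (quadratic) upper bound on the log-likelihood ratio in a fixed neighbourhood of $\theta_0$, together with the Gaussian tail estimate, so that both $A_n$ and its Gaussian surrogate on this shell are $O(e^{-cM^2})$ uniformly in $n$; letting $M$ be large makes this error negligible and absorbable into the $e^{o_{P_{\theta_0}}(1)}$ factor. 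Controlling this intermediate shell---equivalently, upgrading the compact-set LAN control to an integrable bound over the growing bulk---is the technical heart of the argument, and is where the smoothness of the model (twice differentiability, guaranteeing a dominating quadratic) and posterior consistency via the test functions of \cite[Theorem 10.1]{vdV00} enter.
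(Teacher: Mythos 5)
Your proposal is correct and follows the paper's proof in its skeleton: the same decomposition $Z_n = A_n + B_n$ of the normalizer via Lemma~\ref{lem:prior} with $\beta > \tfrac{1}{2}$, and an identical treatment of the bulk term $A_n$ (LAN reparametrization $h=\sqrt n(\gamma-\theta_0)$, completing the square, recognizing the Gaussian density with the prefactor $\sqrt{2\pi/(nI(\theta_0))}\,e^{\frac12 nI(\theta_0)(\hat\theta_n-\theta_0)^2}$). The one genuine divergence is the tail estimate for $B_n$. You bound $B_n \le L_n(\hat\theta_n)\int_{\Theta\setminus K_n}\pi(\gamma)\,d\gamma$ using the global-maximum property of the MLE and then evaluate $L_n(\hat\theta_n)$ via the LAN expansion at $h=\Delta_{n,\theta_0}$; this works, but it requires $\hat\theta_n$ to be an exact global maximizer over all of $\Theta$ and tightness of $\Delta_{n,\theta_0}$ to keep $h$ inside a compact set where LAN applies. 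The paper instead uses Markov's inequality and Fubini's theorem together with the unbiasedness of the likelihood ratio, $\mathbb{E}_{P_{\theta_0}}\bigl[\prod_{i=1}^n p(X_i|\gamma)/p(X_i|\theta_0)\bigr]=1$, to get $P_{\theta_0}\bigl(\sqrt{nI(\theta_0)/2\pi}\,B_n > \delta\bigr) \le \delta^{-1}\sqrt{nI(\theta_0)/2\pi}\;O(n^{-\beta}) = O(n^{1/2-\beta}) \to 0$, and then absorbs the tail into the $o_{P_{\theta_0}}(1)$ remainder using $e^{-\frac12 nI(\theta_0)(\hat\theta_n-\theta_0)^2}\le 1$. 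The paper's route is more elementary and more robust---it needs no control of the likelihood far from $\theta_0$ and no assumption that the MLE is a global maximizer---while your bound buys nothing extra here. Finally, the uniformity issue you correctly flag as the ``technical heart'' (Assumption~\ref{assume:lan} gives the expansion only on compact $h$-sets, whereas $K_n$ corresponds to $h$-sets growing like $n^{1/2+\beta}$) is a real gap, but one the paper does not address: its proof simply says ``following similar steps as in~\eqref{eq:eq11b} and~\eqref{eq:eq13}'' and pulls a single $e^{o_{P_{\theta_0}}(1)}$ factor out of the integral over $K_n$. Your proposed intermediate-shell argument, dominating the log-likelihood ratio by a quadratic on a fixed neighbourhood and using Gaussian tails to make the shell contribution $O(e^{-cM^2})$ uniformly in $n$, is a sensible way to close that gap, so on this point your write-up is more careful than the published proof.
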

\begin{proof}
	Let $\{K_n \subset \Theta\}$ be a sequence of compact balls such that $\theta_0 \in K_n$, where $\theta_0$ is any point in $\Theta$ where prior distribution $\pi(\theta)$ places positive density. Using Lemma~\ref{lem:prior}, we can always find a sequence of sets $\{K_n\}$ for a prior distribution, such that $\theta_0 \in K_n$ and for any positive constant $\beta>\frac{3}{2}$,
	\begin{align}
	\int_{\Theta  \backslash K_n }  \pi(\gamma) d\gamma = O(n^{-\beta}) .\label{eq:eqp1}
	\end{align}
	Observe that
	\begin{align}
	\int_{\Theta}   \prod_{i=1}^n \frac{p(X_i|\gamma)}{p(X_i|\theta_0)}   \pi(\gamma) d\gamma &= \left( \int_{K_n}   \prod_{i=1}^n \frac{p(X_i|\gamma)}{p(X_i|\theta_0)}   \pi(\gamma) d\gamma + \int_{\Theta  \backslash K_n }  \pi(\gamma) \prod_{i=1}^n \frac{p(X_i|\gamma)}{p(X_i|\theta_0)}    d\gamma \right).
	\label{eq:eqp2}
	\end{align}
	
	Consider the first term in~\eqref{eq:eqp2}; following similar steps as in~\eqref{eq:eq11b} and~\eqref{eq:eq13} and using Assumption~\ref{assume:lan}, we have
	\begin{align}
	\nonumber
	\int_{K_n} & \prod_{i=1}^n \frac{p(X_i|\gamma)}{p(X_i|\theta_0)}   \pi(\gamma) d\gamma 
	\\
	\nonumber	
	&=   e^{o_{P_{\theta_0}}(1)} \exp \left( \frac{1}{2} n I(\theta_0) \left((\hat{\theta}_n - \theta_0 )^2 \right) \right) \int_{K_n} \pi(\gamma) \exp \left( -\frac{1}{2} n I(\theta_0) \left((\gamma - \hat{\theta}_n)^2 \right) \right) d\gamma
	\\
	& = e^{o_{P_{\theta_0}}(1)} \exp \left( \frac{1}{2} n I(\theta_0) \left((\hat{\theta}_n - \theta_0 )^2 \right) \right)  \sqrt{\frac{2\pi}{nI(\theta_0)}} \int_{K_n} \pi(\gamma) \mathcal{N}(\gamma;\hat{\theta}_n,(n I(\theta_0))^{-1}) d\gamma,
	\label{eq:eqp3}
	\end{align}
	where the last equality follows from the definition of Gaussian density, $\mathcal{N}(\cdot;\hat{\theta}_n,(n I(\theta_0))^{-1})$.
	
	Substituting~\eqref{eq:eqp3} into~\eqref{eq:eqp2}, we obtain
	\begin{align}
	\nonumber
	\int_{\Theta} &  \prod_{i=1}^n \frac{p(X_i|\gamma)}{p(X_i|\theta_0)}   \pi(\gamma) d\gamma 
	\\
	\nonumber
	=&   \exp \left( \frac{1}{2} n I(\theta_0) \left( (\hat{\theta}_n - \theta_0 )^2 \right) \right)  \sqrt{\frac{2\pi}{nI(\theta_0)}}  \bigg( e^{o_{P_{\theta_0}}(1)} \int_{K_n} \pi(\gamma) \mathcal{N}(\gamma;\hat{\theta}_n,(n I(\theta_0))^{-1}) d\gamma 
	\\
	&+ \exp \left( -\frac{1}{2} n I(\theta_0) \left( (\hat{\theta}_n - \theta_0 )^2 \right) \right)  \sqrt{\frac{nI(\theta_0)}{2\pi}}  \int_{\Theta  \backslash K_n }  \pi(\gamma) \prod_{i=1}^n \frac{p(X_i|\gamma)}{p(X_i|\theta_0)}    d\gamma \bigg).
	\label{eq:eqp4}
	\end{align}
	
	Next, using the Markov's inequality and then Fubini's Theorem, for arbitrary $\delta>0$, we have
	\begin{align}
	\nonumber
	P_{\theta_0} \left( \sqrt{ \frac{n I(\theta_0)}{2 \pi } } \int_{\Theta  \backslash K_n }   \prod_{i=1}^n \frac{p(X_i|\gamma)}{p(X_i|\theta_0)}   \pi(\gamma) d\gamma  > \delta \right) &\leq \sqrt{ \frac{  n I(\theta_0)}{ \delta^2 2 \pi } } \bbE_{P_{\theta_0}} \left[ \int_{\Theta  \backslash K_n }   \prod_{i=1}^n \frac{p(X_i|\gamma)}{p(X_i|\theta_0)}   \pi(\gamma) d\gamma  \right]
	\\
	\nonumber
	&= \sqrt{ \frac{  n I(\theta_0)}{ \delta^2 2 \pi } }  \int_{\Theta  \backslash K_n } \bbE_{P_{\theta_0}} \left[    \prod_{i=1}^n \frac{p(X_i|\gamma)}{p(X_i|\theta_0)} \right]  \pi(\gamma) d\gamma  
	\\
	&= \sqrt{ \frac{  n I(\theta_0)}{ \delta^2 2 \pi } }  \int_{\Theta  \backslash K_n }  \pi(\gamma) d\gamma,
	\end{align}
	since \(\bbE_{P_{\theta_0}} \left[    \prod_{i=1}^n \frac{p(X_i|\gamma)}{p(X_i|\theta_0)} \right]  = 1.\) 
	
	Hence, using~\eqref{eq:eqp1} for $\beta>3/2$, it is straightforward to observe that
	\[  P_{\theta_0} \left( \sqrt{ \frac{n I(\theta_0)}{2 \pi } } \int_{\Theta  \backslash K_n }   \prod_{i=1}^n \frac{p(X_i|\gamma)}{p(X_i|\theta_0)}   \pi(\gamma) d\gamma  > \delta \right) \leq  \sqrt{ \frac{ I(\theta_0)}{ \delta^2 2 \pi } } \frac{1}{n^{\b- 1/2}}.   \]
	
	Since the upper bound above is summable, using First Borel-Cantelli Theorem it follows that   
	\begin{align} \sqrt{ \frac{n I(\theta_0)}{2 \pi } } \int_{\Theta  \backslash K_n }   \prod_{i=1}^n \frac{p(X_i|\gamma)}{p(X_i|\theta_0)}   \pi(\gamma) d\gamma = o(1) ~P_{\theta_0}-\text{a.s.}. 
	\label{eq:eqp5}
	\end{align}
	
	Since, $\exp \left( -\frac{1}{2} n I(\theta_0) \left( (\hat{\theta}_n - \theta_0 )^2 \right) \right)  \leq 1 $, it follows from substituting~\eqref{eq:eqp5} into~\eqref{eq:eqp4} that
	\begin{align*}
	\int_{\Theta} &  \prod_{i=1}^n \frac{p(X_i|\gamma)}{p(X_i|\theta_0)}   \pi(\gamma) d\gamma 
	\\
	=&   \exp \left( \frac{1}{2} n I(\theta_0) \left( (\hat{\theta}_n - \theta_0 )^2 \right) \right)  \sqrt{\frac{2\pi}{nI(\theta_0)}}  \bigg( e^{o_{P_{\theta_0}}(1)} \int_{K_n} \pi(\gamma) \mathcal{N}(\gamma;\hat{\theta}_n,(n I(\theta_0))^{-1}) d\gamma + o(1) \bigg).
	\end{align*}
	\end{proof}
Next we prove Lemma~\ref{lem:ndegen}, showing that the $\alpha-$R\'enyi divergence between the posterior and any non-degenerate distribution diverges in the large sample limit.
\begin{proof}[\textbf{Proof of Lemma~\ref{lem:ndegen}}]
	Let $K_n \subset \Theta$ be a sequence of compact sets such that $\theta_0 \in K_n$, where $\theta_0$ is any point in $\Theta$ where prior distribution $\pi(\theta)$ places positive density. Using Lemma~\ref{lem:prior}, we can always find a sequence of sets $\{K_n\}$ for a prior distribution, such that $\theta_0 \in K_n$ and for any positive constant $\beta>\frac{1}{2}$,
	\begin{align}
	\int_{\Theta  \backslash K_n }  \pi(\gamma) d\gamma = O(n^{-\beta}) .\label{eq:eq1a}
	\end{align}
	
	Now, observe that
	\begin{align}
	\nonumber
	\frac{\alpha-1}{\a}&D_\a(\pi(\theta|\nX)\| q_n(\theta)) 
	\\ 
	\nonumber
	&= \frac{1}{\alpha} \log \left  ( \int_{K_n} q_n(\theta) \left ( \frac{\pi(\theta|\nX)}{q_n(\theta)} \right)^{\alpha} d \theta + \int_{\Theta \backslash K_n} q_n(\theta) \left ( \frac{\pi(\theta|\nX)}{q_n(\theta)} \right)^{\alpha} d \theta \right)
	\\
	&\geq \frac{1}{\alpha} \log \left  ( \int_{K_n} q_n(\theta) \left ( \frac{\pi(\theta|\nX)}{q_n(\theta)} \right)^{\alpha} d \theta \right),
	\label{eq:eq11}
	\end{align}
	
	where the last inequality follows from the fact that the integrand is always positive.
	
	Next, we approximate the ratio in the integrand on the right hand side of the above equation using the LAN condition in Assumption~\ref{assume:lan}. Let
	$\Delta_{n,\theta_0} := \sqrt{n}(\hat{\theta}_n - \theta_0)$, such that $\hat{\theta}_n  \to \theta_0$, $P_{\theta_0} -a.s.$ and $\Delta_{n,\theta_0}$ converges in distribution to $\mathcal{N}(0,I(\theta_0)^{-1})$. Re-parameterizing the expression with $\theta =
	\theta_0 + n^{-1/2} h$, we have
	\begin{align}
	\nonumber
    \int_{K_n} & q_n(\theta)  \left ( \frac{\pi(\theta|\nX)}{q_n(\theta)} \right)^{\alpha} d \theta = n^{-1/2} \int_{K_n}  q_n(\theta_0 + n^{-1/2}
	h)   \left(   \frac {\pi(\theta_0 + n^{-1/2}
		h) \prod_{i=1}^n \frac{p(X_i|(\theta_0  + n^{-1/2} h) )}{p(X_i|\theta_0)}}{ q_n(\theta_0 + n^{-1/2}
		h) \int_{\Theta}   \prod_{i=1}^n \frac{p(X_i|\gamma)}{p(X_i|\theta_0)}   \pi(\gamma) d\gamma} \right)^{\a}  dh \\
	&= n^{-1/2} \int_{K_n}  q_n(\theta_0 + n^{-1/2}
	h)   \left(   \frac {\pi(\theta_0 + n^{-1/2}
		h) \prod_{i=1}^n \frac{p(X_i|(\theta_0  + n^{-1/2} h) )}{p(X_i|\theta_0)}}{ q_n(\theta_0 + n^{-1/2}
		h) \int_{\Theta}   \prod_{i=1}^n \frac{p(X_i|\gamma)}{p(X_i|\theta_0)}   \pi(\gamma) d\gamma} \right)^{\a}  dh \\
	&= n^{-1/2} \int_{K_n}  q_n(\theta_0 + n^{-1/2} h)
	\bigg ( \pi(\theta_0 + n^{-1/2} h) \frac{ \exp (h
		I(\theta_0)\Delta_{n,\theta_0} -	\frac{1}{2}h^2I(\theta_0)+
		o_{P_{\theta_0}}(1) )}{ q_n(\theta_0 + n^{-1/2}
		h) \int_{\Theta}   \prod_{i=1}^n \frac{p(X_i|\gamma)}{p(X_i|\theta_0)}   \pi(\gamma) d\gamma} \bigg)^{\a} dh.
	\end{align}
	{Resubstituting $h = \sqrt{ n}(\theta - \theta_0)$ in the expression above and reverting to the previous parametrization,}
	\begin{align}
	\nonumber
	&= \int_{K_n}   q_n(\theta) \left ( \pi(\theta) \frac{ \exp \left(
		\sqrt{n}(\theta - \theta_0) I(\theta_0)\Delta_{n,\theta_0} -
		\frac{1}{2} n (\theta - \theta_0)^2 I(\theta_0) + o_{P_{\theta_0}}(1) \right) }{  q_n(\theta)\int_{\Theta}   \prod_{i=1}^n \frac{p(X_i|\gamma)}{p(X_i|\theta_0)}   \pi(\gamma) d\gamma} \right)^{\alpha} d\theta 
	\\
	\nonumber
	&= \int_{K_n}   q_n(\theta)  \left ( \pi(\theta) \frac{ e^{o_{P_{\theta_0}}(1)}  \exp \left( -\frac{1}{2} n I(\theta_0) \left((\theta - \theta_0)^2 - 2(\theta - \theta_0)(\hat{\theta}_n- \theta_0)\right) \right) }{  q_n(\theta) \int_{\Theta}   \prod_{i=1}^n \frac{p(X_i|\gamma)}{p(X_i|\theta_0)}   \pi(\gamma) d\gamma } \right)^{\a} d\theta.
	\\
	\intertext{Now completing the square by dividing and multiplying the numerator by $\exp \left( \frac{1}{2} n I(\theta_0) \left((\hat{\theta}_n - \theta_0 )^2 \right) \right)$ we obtain}
	\nonumber
	&= \int_{K_n}   q_n(\theta)  \left ( \pi(\theta) \frac{ e^{o_{P_{\theta_0}}(1)} \exp \left( \frac{1}{2} n I(\theta_0) \left((\hat{\theta}_n - \theta_0 )^2 \right) \right) \exp \left( -\frac{1}{2} n I(\theta_0) \left((\theta - \hat{\theta}_n)^2 \right) \right) }{  q_n(\theta) \int_{\Theta}   \prod_{i=1}^n \frac{p(X_i|\gamma)}{p(X_i|\theta_0)}   \pi(\gamma) d\gamma } \right)^{\a} d\theta
	\\
	&= \int_{K_n}   q_n(\theta)  \left ( \pi(\theta) \frac{ e^{o_{P_{\theta_0}}(1)} \exp \left( \frac{1}{2} n I(\theta_0) \left((\hat{\theta}_n - \theta_0 )^2 \right) \right) \sqrt{\frac{2\pi}{nI(\theta_0)}}  \mathcal{N}(\theta;\hat{\theta}_n,(n I(\theta_0))^{-1}) }{  q_n(\theta) \int_{\Theta}   \prod_{i=1}^n \frac{p(X_i|\gamma)}{p(X_i|\theta_0)}   \pi(\gamma) d\gamma } \right)^{\a} d\theta,
	\label{eq:eq12}
	\end{align}
	where, in the last equality we used the definition of Gaussian density, $\mathcal{N}(\cdot;\hat{\theta}_n,(n I(\theta_0))^{-1})$. 
	
	Next, we approximate the integral in the denominator of~\eqref{eq:eq13}. 
    Using Lemma~\ref{lem:norm}, it follows that there exist a sequence of compact balls $\{K_n \subset \Theta\}$, such that $\theta_0 \in K_n$ and
	\begin{align}
	\nonumber
	\int_{\Theta} &  \prod_{i=1}^n \frac{p(X_i|\gamma)}{p(X_i|\theta_0)}   \pi(\gamma) d\gamma 
	\\
	=&   \sqrt{\frac{2\pi}{nI(\theta_0)}} e^{\left( \frac{1}{2} n I(\theta_0) \left( (\hat{\theta}_n - \theta_0 )^2 \right) \right)}    \bigg( e^{o_{P_{\theta_0}}(1)} \int_{K_n} \pi(\gamma) \mathcal{N}(\gamma;\hat{\theta}_n,(n I
    (\theta_0))^{-1}) d\gamma + o(1) \bigg).
	\label{eq:eqp6b}
	\end{align}
	
	Substituting~\eqref{eq:eqp6b} into~\eqref{eq:eq12} and simplifying, we obtain
	\begin{align}
	\nonumber
		\int_{K_n}&  q_n(\theta)  \left ( \frac{\pi(\theta|\nX)}{q_n(\theta)} \right)^{\alpha} d \theta 
		\\
		&= \int_{K_n}   q_n(\theta)^{1-\a}  \left (  \frac{  e^{o_{P_{\theta_0}}(1)} \pi(\theta) \mathcal{N}(\theta;\hat{\theta}_n,(n I(\theta_0))^{-1}) }{  \bigg( e^{o_{P_{\theta_0}}(1)} \int_{K_n} \pi(\gamma) \mathcal{N}(\gamma;\hat{\theta}_n,(n I(\theta_0))^{-1}) d\gamma + o(1) \bigg) } \right)^{\a} d\theta.
		\label{eq:eq6c}
		\end{align}
	Observe that  
	\[ \left( \mathcal{N}(\theta;\hat{\theta}_n,(n I(\theta_0))^{-1}) \right)^{\a} = \left( \sqrt{ \frac{n I(\theta_0)}{2 \pi } } \right)^{\alpha} \left( \sqrt{\frac{2\pi}{n \alpha I(\theta_0)}}  \right)  \mathcal{N}(\theta;\hat{\theta}_n,(n \alpha I(\theta_0))^{-1}). \] 	
	Substituting this into the right hand side of \eqref{eq:eq6c}
	\begin{align}
	\nonumber
	& \frac{1}{\alpha} \log \int_{K_n} q_n(\theta)^{1-\a}  \left (  \frac{ \pi(\theta)  \mathcal{N}(\theta;\hat{\theta}_n,(n I(\theta_0))^{-1})  }{ \left( e^{o_{P_{\theta_0}}(1)}  \int_{K_n}  \pi(\gamma)  \mathcal{N}(\gamma;\hat{\theta}_n,(n I(\theta_0))^{-1})  d\gamma + o(1) \right)  }  \right)^{\a} d\theta
	\\
	\nonumber
	= & - \log  \left( e^{o_{P_{\theta_0}}(1)}  \int_{K_n}  \pi(\gamma)  \mathcal{N}(\gamma;\hat{\theta}_n,(n I(\theta_0))^{-1})  d\gamma + o(1) \right)  + \frac{\a-1}{2 \a} \log n - \frac{\log \a}{2\a}
	\\
	&   +\frac{\a-1}{2 \a} \log \frac{I(\theta_0)}{2 \pi } 
	+ \frac{1}{\alpha} \log \int_{K_n} q_n(\theta)^{1-\a} \pi(\theta)^{\a} \mathcal{N}(\theta;\hat{\theta}_n,(n \alpha I(\theta_0))^{-1})  d\theta. 
	\label{eq:eq31}
	\end{align}
	
	From the Laplace approximation (Lemma~\ref{lem:laplace}) and the continuity of the logarithm, we have 
	\[- \log  \left( e^{o_{P_{\theta_0}}(1)}  \int_{K_n}  \pi(\gamma)  \mathcal{N}(\gamma;\hat{\theta}_n,(n I(\theta_0))^{-1})  d\gamma + o(1) \right) \sim - \log  \left( e^{o_{P_{\theta_0}}(1)}  \pi(\hat{\theta}_n)   \right).\] Next, using the Laplace approximation on the last term in \eqref{eq:eq31}
	\begin{align}
	\nonumber
	\frac{1}{\alpha} \log \int_{K_n} q_n(\theta)^{1-\a} \pi(\theta)^{\a} \mathcal{N}(\theta;\hat{\theta}_n,(n \alpha I(\theta_0))^{-1})  d\theta
	\sim  \frac{\alpha -1}{\alpha} \log \frac{1}{q_n(\hat{\theta}_n)} +  \log \pi(\hat{\theta}_n) .
	\end{align}
	
	Substituting the above two approximations into~\eqref{eq:eq31}, we have
	\begin{align}
	\nonumber
	\frac{1}{\alpha} &\log \int_{K_n} q_n(\theta)^{1-\a}  \left (  \frac{ \pi(\theta)  \mathcal{N}(\theta;\hat{\theta}_n,(n I(\theta_0))^{-1})  }{ \left( e^{o_{P_{\theta_0}}(1)}  \int_{K_n}  \pi(\gamma)  \mathcal{N}(\gamma;\hat{\theta}_n,(n I(\theta_0))^{-1})  d\gamma + o(1) \right)  }  \right)^{\a} d\theta
	\\
	\nonumber
	\sim &  - \log  \left( e^{o_{P_{\theta_0}}(1)}  \pi(\hat{\theta}_n) \right)  - \frac{\log \a}{2\a} +\frac{\a-1}{2 \a} \log \frac{I(\theta_0)}{2 \pi }
	\\
	\nonumber
	&     +
	\frac{\alpha -1}{2\alpha} \log {n} - 
	\frac{\alpha -1}{\alpha} \log q_n(\hat{\theta}_n) + \log \pi(\hat{\theta}_n)
	\\
	\nonumber
	\sim &  - \log  \left(   \pi(\hat{\theta}_n)  \right)  - \frac{\log \a}{2\a} +\frac{\a-1}{2 \a} \log \frac{I(\theta_0)}{2 \pi } +
	\frac{\alpha -1}{2\alpha} \log {n} - 
	\frac{\alpha -1}{\alpha} \log q(\hat{\theta}_n) + \log \pi(\hat{\theta}_n) + o_{P_{\theta_0}}(1)
	\\
	= &    - \frac{\log \a}{2\a} +\frac{\a-1}{2 \a} \log \frac{I(\theta_0)}{2 \pi } +
	\frac{\alpha -1}{2\alpha} \log {n} - 
	\frac{\alpha -1}{\alpha} \log q(\hat{\theta}_n) + o_{P_{\theta_0}}(1),
	\label{eq:eq29a}
	\end{align}
	
    where the penultimate approximation follows from the fact that \[ 
    q_n(\hat{\theta}_n) \sim q(\hat{\theta}_n). \]
    Note that $\hat{\theta}_n \to \theta_0 ,~ P_{\theta_0}- a.s $. Therefore, if $q(\theta_0) = 0$, then the right hand side in~\eqref{eq:eq29a} will diverge as $n \to \infty$ because $\frac{\a-1}{2 \a} \log n $ also diverges as $n \to \infty$. Also observe that, for any $q(\theta)$ that places finite mass on $\theta_0$, the $\alpha-$R\'enyi divergence diverges as $n \to \infty$. Hence, $\alpha-$R\'enyi approximate posterior must converge weakly to a distribution that has a Dirac delta distribution at the true parameter $\theta_0$. 
\end{proof}

Next, we show that the $\alpha-$R\'enyi divergence between the true posterior and the sequence $\{q'_n(\theta)\}\in \cQ$ as defined in~\eqref{eq:eq14i} is bounded below by a positive number. 
\begin{proof}[\textbf{Proof of Lemma~\ref{lem:subopt1}}]
	\citet[Theorem 19]{TErvan2012} shows that for any $\alpha>0$, the $\alpha-$R\'enyi divergence  $D_\alpha(p(\theta)\| q(\theta))$ is a lower semi-continuous function of the pair $(p(\theta),q(\theta))$ in the weak topology on  the space of probability measures. Recall from~\eqref{eq:4} that the true posterior distribution $\pi(\theta|\nX)$ converges weakly to $\delta_{\theta_0}$~$P_{\theta_0}-a.s$. Using this fact it follows that
	\begin{align*} \liminf_{n \to \infty} D_\alpha(\pi(\theta|\nX) \| q'_n(\theta)) \geq D_\alpha \left(\delta_{\theta_0} \bigg\| w^j\delta_{\theta_0} +  \sum_{i=1, i\neq j}^{\infty} w^i q_i(\theta) \right)\quad P_{\theta_0}-a.s.
	\end{align*}
	Next, using Pinsker's inequality~\citep{CoverTM2006} for  $\alpha > 1$, we have
	\begin{align*} 
	D_\alpha\left(\delta_{\theta_0} \bigg\| w^j\delta_{\theta_0} +  \sum_{i=1, i\neq j}^{\infty} w^i q_i(\theta) \right) &\geq \frac{1}{2} \left( \int_{\Theta} \left | \delta_{\theta_0} - w^j\delta_{\theta_0} -  \sum_{i=1, i\neq j}^{\infty} w^i q_i(\theta) \right| d\theta \right)^2
	\\
	&=  \frac{1}{2} \left( \int_{\Theta} \left |(1-w^j) \delta_{\theta_0} - \sum_{i=1 , i \neq j}^{\infty} w^i q_i(\theta) \right | d\theta \right)^2.
	\end{align*}
	Now dividing the integral over ball of radius $\epsilon$ centered at $\theta_0$, $B(\theta_0,\epsilon)$ and its complement, we obtain  
	\begin{align}
	\nonumber
	\liminf_{n \to \infty}& D_\alpha(\pi(\theta|\nX)\| q'_n(\theta))
	\\
	\nonumber
	&\geq \frac{1}{2} \left( \int_{B(\theta_0,\epsilon)} \left |(1-w^j) \delta_{\theta_0} - \sum_{i=1 , i \neq j}^{\infty} w^i q_i(\theta) \right| d\theta + \int_{B(\theta_0,\epsilon)^C} \left|(1-w^j) \delta_{\theta_0} - \sum_{i=1 , i \neq j}^{\infty} w^i q_i(\theta) \right | d\theta \right)^2
	\\
	\nonumber
	&\geq  \frac{1}{2} \left( \int_{B(\theta_0,\epsilon)^C} \left |(1-w^j) \delta_{\theta_0} - \sum_{i=1 , i \neq j}^{\infty} w^i q_i(\theta) \right | d\theta \right)^2
	\\
	&=  \frac{1}{2} \left( \int_{B(\theta_0,\epsilon)^C} \left | - \sum_{i=1 , i \neq j}^{\infty} w^i q_i(\theta) \right | d\theta \right)^2 \quad P_{\theta_0}-a.s.
	\end{align}
	
	Since,  $w^i \in (0,1)$, observe that for any $\epsilon>0$, there exists $\eta(\epsilon)>0$, such that 
	\[\frac{1}{2} \left( \int_{B(\theta_0,\epsilon)^C} \left | - \sum_{i=1 , i \neq j}^{\infty} w^i q_i(\theta) \right | d\theta \right)^2 \geq \eta(\epsilon). \] 
	
	Therefore, it follows that 
	\begin{align*}
	\liminf_{n \to \infty}\ & D_\alpha(\pi(\theta|\nX)\| q'_n(\theta)) \geq \eta(\epsilon) >0  \quad P_{\theta_0}-a.s.
	\end{align*}
\end{proof}

In the following result, we show that if $q_i(\theta), i \in \{1,2,\ldots \}$ in the definition of $\{q'_n(\theta)\}$ in~\eqref{eq:eq14i} are Dirac delta distributions then \[ \liminf_{n \to \infty} D_\alpha(\pi(\theta|\nX)\| q'_n(\theta)) \geq 2 (1-w^j)^2 >0  \quad P_{\theta_0}-a.s, \]
where $w^j$ is the weight of $\delta_{\theta_0}$. Consider a sequence $\{q_n(\theta)\}$, that  converges weakly to a convex combination of $\delta_{\theta_i}, i\in \{1,2,\ldots \}$ such that for weights $\{w^i \in (0,1) : \sum_{i=1}^{\infty} w^i=1\},$
\begin{align}
q_n(\theta) \Rightarrow \sum_{i=1}^{\infty} w^i \delta_{\theta_i} ,
\label{eq:eq14}
\end{align}
where for any $j\in \{1,2,\ldots \}$  , $\theta_j = \theta_0$ and for all $i\in \{1,2,\ldots\} \backslash \{j\}$, $\theta_j \neq \theta_0$. 
\vspace{1em}
\begin{lemma}
	\label{lem:subopt}
	The $\alpha-$R\'enyi divergence between the true posterior and sequence $\{q_n(\theta)\}$ is bounded below by a positive number $2(1-w^j)^2$; that is,
	\[ \liminf_{n \to \infty} D_\alpha(\pi(\theta|\nX)\| q_n(\theta)) \geq   2(1-w^j)^2 >0 \quad P_{\theta_0}-a.s, \] 
where $w^j$	is the  weight  of $\delta_{\theta_0}$ in the definition of sequence $\{q_n(\theta)\}$. 
\end{lemma}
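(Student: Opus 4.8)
The plan is to mirror the argument used for Lemma~\ref{lem:subopt1}, exploiting the lower semicontinuity of the $\alpha$-R\'enyi divergence together with the weak convergence of the posterior, and then to compute explicitly the divergence between the two limiting atomic measures. First, recall from~\cite[Theorem 19]{TErvan2012} that for $\alpha > 1$ the map $(p,q)\mapsto D_\alpha(p\|q)$ is jointly lower semicontinuous in the weak topology on probability measures, and recall from~\eqref{eq:4} that $\pi(\theta|\nX) \Rightarrow \delta_{\theta_0}$, $P_{\theta_0}$-a.s. Combining this with the hypothesis $q_n \Rightarrow \sum_{i=1}^{\infty} w^i \delta_{\theta_i}$ from~\eqref{eq:eq14} yields
\[ \liminf_{n\to\infty} D_\alpha(\pi(\theta|\nX)\| q_n(\theta)) \geq D_\alpha\left(\delta_{\theta_0} \bigg\| \sum_{i=1}^{\infty} w^i\delta_{\theta_i}\right) \quad P_{\theta_0}\text{-a.s.} \]
It therefore remains only to bound the divergence between the two purely atomic limiting measures from below by $2(1-w^j)^2$.

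For this last step I would invoke Pinsker's inequality exactly as in the proof of Lemma~\ref{lem:subopt1}: for $\alpha>1$,
\[ D_\alpha\left(\delta_{\theta_0} \bigg\| \sum_{i=1}^{\infty} w^i\delta_{\theta_i}\right) \geq \frac{1}{2}\left( \int_{\Theta} \left| \delta_{\theta_0} - \sum_{i=1}^{\infty} w^i\delta_{\theta_i} \right| d\theta \right)^2, \]
where, since we now compare a point mass with an atomic mixture rather than with a density, the integral is read as the total variation between the two measures. The key quantity to evaluate is this total variation, which I would obtain by splitting the mass over the atoms.

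Because $\theta_j = \theta_0$, the measure $\delta_{\theta_0}$ places unit mass at $\theta_0$, while the mixture places mass $w^j$ at $\theta_0$ and mass $w^i$ at each $\theta_i \neq \theta_0$. The total variation therefore accumulates a contribution $|1-w^j| = 1-w^j$ from the shared atom at $\theta_0$ and a contribution $\sum_{i\neq j} w^i = 1-w^j$ from the atoms away from $\theta_0$, so that the $L_1$ distance equals $2(1-w^j)$. Substituting into the Pinsker bound gives $\frac{1}{2}\big(2(1-w^j)\big)^2 = 2(1-w^j)^2$, and since $w^j\in(0,1)$ this is strictly positive, which completes the argument.

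The proof presents no deep obstacle beyond the one already handled in Lemma~\ref{lem:subopt1}; the only point demanding care is the bookkeeping of the singular (atomic) masses in the total-variation computation. In particular one must recognise that the atom shared at $\theta_0$ contributes $1-w^j$ rather than zero, and that the remaining atoms contribute exactly $1-w^j$ in aggregate irrespective of whether the $\theta_i$ for $i\neq j$ are mutually distinct. I note in passing that a direct evaluation of $D_\alpha$ between these atomic measures in fact yields the sharper lower bound $-\log w^j$, but the Pinsker route above suffices and keeps the argument parallel to Lemma~\ref{lem:subopt1}.
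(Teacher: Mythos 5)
Your proposal is correct and follows essentially the same route as the paper's proof: lower semicontinuity of $D_\alpha$ in the weak topology plus $\pi(\theta|\nX) \Rightarrow \delta_{\theta_0}$ to reduce to the atomic limits, then Pinsker's inequality and the total-variation computation giving $\frac{1}{2}\bigl(2(1-w^j)\bigr)^2 = 2(1-w^j)^2$. Your direct mass-bookkeeping (atom at $\theta_0$ contributes $1-w^j$, the rest contribute $\sum_{i\neq j} w^i = 1-w^j$) replaces the paper's device of integrating over disjoint $\epsilon$-balls around the $\theta_i$, but this is the same calculation in a marginally cleaner form, and your side remark that a direct evaluation yields the sharper bound $-\log w^j$ is also accurate.
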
  
\begin{proof}
\citet[Theorem 19]{TErvan2012} shows that for any $\alpha>0$, the $\alpha-$R\'enyi divergence  $D_\alpha(p(\theta)\| q(\theta))$ is a lower semi-continuous function of the pair $(p(\theta),q(\theta))$ in the weak topology on  the space of probability measures. Recall from~\eqref{eq:4} that the true posterior distribution $\pi(\theta|\nX)$ converges weakly to $\delta_{\theta_0}$, $P_{\theta_0}-a.s$. Using this fact it follows that
	\begin{align*} \liminf_{n \to \infty} D_\alpha(\pi(\theta|\nX)\| q_n(\theta)) \geq D_\alpha \left (\delta_{\theta_0} \bigg\| \sum_{i=1}^{\infty} w_i \delta_{\theta_i} \right )\quad P_{\theta_0}-a.s.
	\end{align*}
Next, using Pinsker's inequality~\citep{CoverTM2006} for  $\alpha > 1$, we have
\begin{align} 
\nonumber
 D_\alpha \left(\delta_{\theta_0} \bigg\| \sum_{i=1}^{\infty} w^i \delta_{\theta_i} \right )  &\geq \frac{1}{2} \left( \int_{\Theta} \left  | \delta_{\theta_0} - \sum_{i=1}^{\infty} w^i \delta_{\theta_i} \right | d\theta \right)^2
\\
\nonumber
&=  \frac{1}{2} \left( \int_{\Theta} \left|(1-w^j) \delta_{\theta_0} - \sum_{i=1 , i \neq j}^{\infty} w^i \delta_{\theta_i} \right| d\theta \right)^2
\\
\nonumber
&= \frac{1}{2} \left( \int_{B(\theta_0,\epsilon)} (1-w^j) |\delta_{\theta_0}| d\theta + \sum_{i=1 , i \neq j}^{\infty} w^i \int_{B(\theta_i,\epsilon)}|- \delta_{\theta_i} | d\theta \right)^2
\\
&= \frac{1}{2} \left(  (1-w^j) + \sum_{i=1 , i \neq j}^{\infty} w^i  \right)^2 = 2 (1-w^j)^2 ,
\end{align}

where $B(\theta_i,\epsilon)$ is the  ball of radius $\epsilon$ centered at $\theta_i$. Note that, there always exist an $\epsilon>0$, such that $\bigcap_{i=1}^{\infty} B(\theta_i,\epsilon) = \phi$. Since, by the definition  of sequence $\{q_n(\theta)\}$, $w^j\in(0,1)$, therefore $2(1-w^j)^2>0$ and the lemma follows.
	\end{proof}

 Now we show that any sequence of distributions $\{s_n(\theta)\} \subset \cQ$ that converges weakly to a distribution $s(\theta) \in \cQ$, that has positive density at any point other than the true parameter $\theta_0$, cannot achieve zero \scKL\ divergence in the limit.

 \begin{proof}[\textbf{Proof of Proposition~\ref{prop:UBfin}}]
     Observe that for any good sequence $\{\bar q_n(\theta)\}$ 
     \[\min_{q \in \mathcal Q}  D_{\alpha}(\pi(\theta|\nX) \|q(\theta)) \leq    D_{\alpha}(\pi(\theta|\nX)\|\bar{q}_n(\theta)). \] 
     Therefore, for the second part, it suffices to show that \[  D_{\alpha}(\pi(\theta|\nX)\|\bar{q}_n(\theta)) < B+ o_{P_{\theta_0}}(1).\] 
     The subsequent arguments in the proof are for any $n \geq \max(n_1,n_2,n_3,n_M)$, where $n_1,n_2$, and $n_3$ are defined in Assumption~\ref{def:gsequence}. First
     observe that, for any compact ball $K$ containing the true parameter $\theta_0$,
     \begin{align}
     \nonumber
     \frac{\alpha-1}{\a}&D_\a(\pi(\theta|\nX)\| \bar q_n(\theta)) 
     \\ 
     &= \frac{1}{\alpha} \log \left  ( \int_{K} \bar q_n(\theta) \left ( \frac{\pi(\theta|\nX)}{\bar q_n(\theta)} \right)^{\alpha} d \theta + \int_{\Theta \backslash K} \bar q_n(\theta) \left ( \frac{\pi(\theta|\nX)}{ \bar q_n(\theta)} \right)^{\alpha} d \theta \right).
     \label{eq:eq11a}
     \end{align}
     
     
     First, we approximate the first integral on the right hand side using the LAN condition in Assumption~\ref{assume:lan}. Let
     $\Delta_{n,\theta_0} := \sqrt{n}(\hat{\theta}_n - \theta_0)$, where $\hat{\theta}_n  \to \theta_0$, $P_{\theta_0} -a.s.$ and $\Delta_{n,\theta_0}$ converges in distribution to $\mathcal{N}(0,I(\theta_0)^{-1})$. Reparameterizing the expression with $\theta =
     \theta_0 + n^{-1/2} h$, we have
     \begin{align}
     \nonumber
     \int_{K} & \bar q_n(\theta)  \left ( \frac{\pi(\theta|\nX)}{ \bar q_n(\theta)} \right)^{\alpha} d \theta = n^{-1/2} \int_{K}  \bar q_n(\theta_0 + n^{-1/2}
     h)   \left(   \frac {\pi(\theta_0 + n^{-1/2}
         h) \prod_{i=1}^n \frac{p(X_i|(\theta_0  + n^{-1/2} h) )}{p(X_i|\theta_0)}}{ \bar q_n(\theta_0 + n^{-1/2}
         h) \int_{\Theta}   \prod_{i=1}^n \frac{p(X_i|\gamma)}{p(X_i|\theta_0)}   \pi(\gamma) d\gamma} \right)^{\a}  dh \\
     \nonumber
     &= n^{-1/2} \int_{K}  \bar q_n(\theta_0 + n^{-1/2}
     h)   \left(   \frac {\pi(\theta_0 + n^{-1/2}
         h) \prod_{i=1}^n \frac{p(X_i|(\theta_0  + n^{-1/2} h) )}{p(X_i|\theta_0)}}{ \bar q_n(\theta_0 + n^{-1/2}
         h) \int_{\Theta}   \prod_{i=1}^n \frac{p(X_i|\gamma)}{p(X_i|\theta_0)}   \pi(\gamma) d\gamma} \right)^{\a}  dh \\
     &= n^{-1/2} \int_{K}  \bar q_n(\theta_0 + n^{-1/2} h)
     \bigg ( \pi(\theta_0 + n^{-1/2} h) \frac{ \exp (h
         I(\theta_0)\Delta_{n,\theta_0} -	\frac{1}{2}h^2I(\theta_0)+
         o_{P_{\theta_0}}(1) )}{ \bar q_n(\theta_0 + n^{-1/2}
         h) \int_{\Theta}   \prod_{i=1}^n \frac{p(X_i|\gamma)}{p(X_i|\theta_0)}   \pi(\gamma) d\gamma} \bigg)^{\a} dh.
     \label{eq:eq11b}
     \end{align}
     {Resubstituting $h = \sqrt{ n}(\theta - \theta_0)$ in the expression above and reverting to the previous parametrization,}
     \begin{align}
     \nonumber
     &= \int_{K}  \bar q_n(\theta) \left ( \pi(\theta) \frac{ \exp \left(
         \sqrt{n}(\theta - \theta_0) I(\theta_0)\Delta_{n,\theta_0} -
         \frac{1}{2} n (\theta - \theta_0)^2 I(\theta_0) + o_{P_{\theta_0}}(1) \right) }{ \bar q_n(\theta)\int_{\Theta}   \prod_{i=1}^n \frac{p(X_i|\gamma)}{p(X_i|\theta_0)}   \pi(\gamma) d\gamma} \right)^{\alpha} d\theta 
     \\
     \nonumber
     &= \int_{K}  \bar q_n(\theta)  \left ( \pi(\theta) \frac{ e^{o_{P_{\theta_0}}(1)}  \exp \left( -\frac{1}{2} n I(\theta_0) \left((\theta - \theta_0)^2 - 2(\theta - \theta_0)(\hat{\theta}_n- \theta_0)\right) \right) }{ \bar q_n(\theta) \int_{\Theta}   \prod_{i=1}^n \frac{p(X_i|\gamma)}{p(X_i|\theta_0)}   \pi(\gamma) d\gamma } \right)^{\a} d\theta.
     \\
     \intertext{Completing the square by dividing and multiplying the numerator by $\exp \left( \frac{1}{2} n I(\theta_0) \left((\hat{\theta}_n - \theta_0 )^2 \right) \right)$}
     \nonumber
     &= \int_{K}  \bar q_n(\theta)  \left ( \pi(\theta) \frac{ e^{o_{P_{\theta_0}}(1)} \exp \left( \frac{1}{2} n I(\theta_0) \left((\hat{\theta}_n - \theta_0 )^2 \right) \right) \exp \left( -\frac{1}{2} n I(\theta_0) \left((\theta - \hat{\theta}_n)^2 \right) \right) }{ \bar q_n(\theta) \int_{\Theta}   \prod_{i=1}^n \frac{p(X_i|\gamma)}{p(X_i|\theta_0)}   \pi(\gamma) d\gamma } \right)^{\a} d\theta
     \\
     &= \int_{K}  \bar q_n(\theta)  \left ( \pi(\theta) \frac{ e^{o_{P_{\theta_0}}(1)} \exp \left( \frac{1}{2} n I(\theta_0) \left((\hat{\theta}_n - \theta_0 )^2 \right) \right) \sqrt{\frac{2\pi}{nI(\theta_0)}}  \mathcal{N}(\theta;\hat{\theta}_n,(n I(\theta_0))^{-1}) }{ \bar q_n(\theta) \int_{\Theta}   \prod_{i=1}^n \frac{p(X_i|\gamma)}{p(X_i|\theta_0)}   \pi(\gamma) d\gamma } \right)^{\a} d\theta,
     \label{eq:eq13}
     \end{align}
     where, in the last equality we used the definition of Gaussian density, $\mathcal{N}(\cdot;\hat{\theta}_n,(n I(\theta_0))^{-1})$. 
     
     Next, we approximate the integral in the denominator of~\eqref{eq:eq13}. Using Lemma~\ref{lem:norm} (in the appendix) it follows that, there exist a sequence of compact balls $\{K_n \subset \Theta\}$, such that $\theta_0 \in K_n$ and
     \begin{align}
     \nonumber
     \int_{\Theta} &  \prod_{i=1}^n \frac{p(X_i|\gamma)}{p(X_i|\theta_0)}   \pi(\gamma) d\gamma 
     \\
     =&   \sqrt{\frac{2\pi}{nI(\theta_0)}} e^{\left( \frac{1}{2} n I(\theta_0) \left( (\hat{\theta}_n - \theta_0 )^2 \right) \right)}    \bigg( e^{o_{P_{\theta_0}}(1)} \int_{K_n} \pi(\gamma) \mathcal{N}(\gamma;\hat{\theta}_n,(n I(\theta_0))^{-1}) d\gamma + o(1) \bigg).
     \label{eq:eqp6}
     \end{align}
     
     Substituting~\eqref{eq:eqp6} into~\eqref{eq:eq13}, we obtain 
     \vspace{-1em}
     \begin{align}
     \hspace{-.2in}\int_{K} & \bar q_n(\theta)  \left ( \frac{\pi(\theta|\nX)}{ \bar q_n(\theta)} \right)^{\alpha} d \theta
     = \int_{K}  \bar q_n(\theta)^{1-\a}  \left (  \frac{ e^{o_{P_{\theta_0}}(1)}  \pi(\theta) \mathcal{N}(\theta;\hat{\theta}_n,\frac{1}{n I(\theta_0)}) }{  \bigg( e^{o_{P_{\theta_0}}(1)} \int_{K_n} \pi(\gamma) \mathcal{N}(\gamma;\hat{\theta}_n,\frac{1}{n I(\theta_0)}) d\gamma + o(1) \bigg) } \right)^{\a} d\theta .
     \label{eq:eq14a}
     \end{align}
     Now, recall the definition of compact ball $K$, $n_1$ and $n_2$ from Assumption~\ref{def:gsequence} and fix $n \geq n'_0$, where $n'_0=\max(n_1,n_2)$. Note that $n_2$ is chosen, such that for all $n \geq n_2$, the bound in Assumption~\ref{def:gsequence}(3) holds on the set $\Theta \backslash K$. Next, consider the second term inside the logarithm function on the right hand side of~\eqref{eq:eq11a}. Using Assumption~\ref{def:gsequence}(3), we obtain
     \vspace{-.3em}
     \begin{align}
     \int_{\Theta \backslash K}   \bar q_n(\theta) \left ( \frac{\pi(\theta|\nX)}{ \bar q_n(\theta)} \right)^{\alpha} d\theta \leq M_r^{\alpha} \int_{\Theta \backslash K}   \bar q_n(\theta)  d\theta \quad~P_{\theta_0}-a.s.
     \label{eq:eq13a}
     \end{align}
     
     Recall that the \good\ $\{ \bar q_n(\cdot)\}$ exists $P_{\theta_0}-a.s$ with mean $\hat{\theta}_n$, for all $n\geq n_1$ and therefore it converges weakly to $\delta_{\theta_0}$ (Assumption~\ref{def:gsequence}(2)). Combined with the fact that compact set $K$ contains the true parameter $\theta_0$, it follows that the second term in~\eqref{eq:eq11a} is of $o(1)$, $P_{\theta_0}-a.s$. 
     Therefore, the second term inside the logarithm function on the right hand side of~\eqref{eq:eq11a} is $o(1)$:
     \begin{align}
     \int_{\Theta \backslash K}  \bar q_n(\theta) \left ( \frac{\pi(\theta|\nX)}{ \bar q_n(\theta)} \right)^{\alpha} d\theta  = o(1)~P_{\theta_0}-\text{a.s}.
     \label{eq:eq14b}
     \end{align}
     
     Substituting~\eqref{eq:eq14a} and~\eqref{eq:eq14b} into~\eqref{eq:eq11a}, we have
     \begin{align}
     \nonumber
     \frac{\alpha-1}{\a}&D_\a(\pi(\theta|\nX)\| \bar q_n(\theta)) 
     \\ 
     \nonumber
     &= \frac{1}{\alpha} \log \left  ( \int_{K} \bar q_n(\theta)^{1-\a}  \left (  \frac{ e^{o_{P_{\theta_0}}(1)} \pi(\theta)  \mathcal{N}(\theta;\hat{\theta}_n,(n I(\theta_0))^{-1})  }{ \bigg( e^{o_{P_{\theta_0}}(1)} \int_{K_n} \pi(\gamma) \mathcal{N}(\gamma;\hat{\theta}_n,(n I(\theta_0))^{-1}) d\gamma + o(1) \bigg)  }  \right)^{\a} d\theta + o(1) \right)
     \\
     \nonumber
     &= \frac{1}{\alpha} \log \left  (  e^{o_{P_{\theta_0}}(1)} \int_{K} \bar q_n(\theta)^{1-\a}  \left (  \frac{ \pi(\theta)  \mathcal{N}(\theta;\hat{\theta}_n,(n I(\theta_0))^{-1})  }{    \bigg( e^{o_{P_{\theta_0}}(1)} \int_{K_n} \pi(\gamma) \mathcal{N}(\gamma;\hat{\theta}_n,(n I(\theta_0))^{-1}) d\gamma + o(1) \bigg)   }  \right)^{\a} d\theta + o(1) \right). \tag{$\star\star$}
     \intertext{Now observe that,}
     \nonumber
     (\star\star)&\sim  \frac{1}{\alpha} \log \left  (  \int_{K} \bar q_n(\theta)^{1-\a}  \left (  \frac{ \pi(\theta)  \mathcal{N}(\theta;\hat{\theta}_n,(n I(\theta_0))^{-1})  }{    \bigg( e^{o_{P_{\theta_0}}(1)} \int_{K_n} \pi(\gamma) \mathcal{N}(\gamma;\hat{\theta}_n,(n I(\theta_0))^{-1}) d\gamma + o(1) \bigg)   }  \right)^{\a} d\theta  \right) 
     \\
     \nonumber
     &= \frac{1}{\alpha} \log \left  (  \int_{K} \bar q_n(\theta)^{1-\a} \pi(\theta)^{\a}  \mathcal{N}(\theta;\hat{\theta}_n,(n I(\theta_0))^{-1})^{\a}  d\theta  \right)  
     \\
     \nonumber
     & \quad - \log \bigg( e^{o_{P_{\theta_0}}(1)} \int_{K_n} \pi(\gamma) \mathcal{N}(\gamma;\hat{\theta}_n,(n I(\theta_0))^{-1}) d\gamma + o(1) \bigg)
     \nonumber
     \\
     \nonumber
     &\sim \frac{1}{\alpha} \log \left  (  \int_{K} \bar q_n(\theta)^{1-\a} \pi(\theta)^{\a}  \mathcal{N}(\theta;\hat{\theta}_n,(n I(\theta_0))^{-1})^{\a}  d\theta  \right)  
     \\
     & \quad - \log \bigg(  \int_{K_n} \pi(\gamma) \mathcal{N}(\gamma;\hat{\theta}_n,(n I(\theta_0))^{-1}) d\gamma  \bigg) \blue{+ o_{P_{\theta_0}}(1)}.
     \label{eq:eq15a}
     \end{align}
     
     Note that  
     $ \left( \mathcal{N}(\theta;\hat{\theta}_n,(n I(\theta_0))^{-1}) \right)^{\a} = \left( \sqrt{ \frac{n I(\theta_0)}{2 \pi } } \right)^{\alpha} \left( \sqrt{\frac{2\pi}{n \alpha I(\theta_0)}}  \right)  \mathcal{N}(\theta;\hat{\theta}_n,(n \alpha I(\theta_0))^{-1}). $ 
     
     Substituting this into \eqref{eq:eq15a}, for large enough $n$, we have
     \begin{align}
     \nonumber
     \frac{\alpha-1}{\a}&D_\a(\pi(\theta|\nX)\| \bar q_n(\theta)) 
     \\
     \nonumber
     \sim &  \frac{\a-1}{2 \a} \log n - \frac{\log \a}{2\a}  +\frac{\a-1}{2 \a} \log \frac{I(\theta_0)}{2 \pi } + \frac{1}{\alpha} \log \int_{K} \bar q_n(\theta)^{1-\a} \pi(\theta)^{\a} \mathcal{N}(\theta;\hat{\theta}_n,(n \alpha I(\theta_0))^{-1})  d\theta
     \\
     &   
     - \log  \left(  \int_{K_n}  \pi(\gamma)  \mathcal{N}(\gamma;\hat{\theta}_n,(n I(\theta_0))^{-1})  d\gamma \right) . 
     \label{eq:eq16a}
     \end{align}
     
     From the Laplace approximation (Lemma~\ref{lem:laplace}) and the continuity of the logarithm, we have 
     \begin{align}
     \nonumber
     \frac{1}{\alpha} \log \int_{K} \bar q_n(\theta)^{1-\a} \pi(\theta)^{\a} \mathcal{N}(\theta;\hat{\theta}_n,(n \alpha I(\theta_0))^{-1})  d\theta
     \sim  \frac{1-\alpha}{\alpha} \log {\bar q_n(\hat{\theta}_n)} +  \log \pi(\hat{\theta}_n) .
     \end{align}
     Next, using the Laplace approximation~(Lemma~\ref{lem:laplace}) on the last term in~\eqref{eq:eq16a} yields
     \[- \log  \left(   \int_{K_n}  \pi(\gamma)  \mathcal{N}(\gamma;\hat{\theta}_n,(n I(\theta_0))^{-1})  d\gamma \right) \sim - \log  \left(  \pi(\hat{\theta}_n)   \right).\]
     
     Substituting the above two approximations into~\eqref{eq:eq16a}, for large enough $n$, we obtain
     \begin{align}
     \nonumber
     \frac{\alpha-1}{\a}&D_\a(\pi(\theta|\nX)\| \bar q_n(\theta)) 
     \\
     \nonumber
     \sim &  
     \frac{1 - \alpha}{\alpha} \log \bar q_n(\hat{\theta}_n) + \log    \pi(\hat{\theta}_n)    - \frac{\log \a}{2\a} +\frac{\a-1}{2 \a} \log \frac{I(\theta_0)}{2 \pi } +
     \frac{\alpha -1}{2\alpha} \log {n}  - \log \pi(\hat{\theta}_n) \blue{ + o_{P_{\theta_0}}(1)}
     \\
     = &   \frac{1 - \alpha}{\alpha} \log \bar q_n(\hat{\theta}_n)  - \frac{\log \a}{2\a} +\frac{\a-1}{2 \a} \log \frac{I(\theta_0)}{2 \pi } +
     \frac{\alpha -1}{2\alpha} \log {n} \blue{ + o_{P_{\theta_0}}(1)}.
     \label{eq:eq17a}
     \end{align}
     
     Now, recall Assumption~\ref{def:gsequence}(4) which, combined with the monotonicity of logarithm function, implies that $\log \bar q_n(\cdot)$ is concave for all $n\geq n_3$. Using Jensen's inequality, \[ \log \bar q_n(\hat{\theta}_n) =  \log \bar q_n \left( \int \theta \bar q_n(\theta) d\theta \right) \geq \int  \bar q_n(\theta) \log \bar q_n(\theta) d\theta.  \]
     Since $\a>1$, 
     \[\frac{1 - \alpha}{\alpha} \log \bar q_n(\hat{\theta}_n) \leq - \frac{\a-1}{\a} \int  \bar q_n(\theta) \log \bar q_n(\theta) d\theta.  \]
     Using Lemma~\ref{lem:entropy},  there exists $n_M\geq 1$ and $0 < \bar M <\infty$, such that for all $n\geq  n_M$
     \begin{align} - \frac{\a-1}{\a} \int  \bar q_n(\theta) \log \bar q_n(\theta) d\theta \leq \frac{\a-1}{2\a} \log \left( 2\pi \bar e \frac{\bar M}{n} \right) = \frac{\a-1}{2\a} \log (2\pi \bar e {\bar M} ) - \frac{\a-1}{2\a} \log n, 
     \label{eq:eq18}
     \end{align}
     where $\bar e$ is the Euler's constant.
     Substituting~\eqref{eq:eq18} into the right hand  side of~\eqref{eq:eq17a}, we have for all $n \geq n_0$, where $n_0=\max(n_0',n_3,n_M)$, 
     \begin{align}
     \nonumber
     \frac{1 - \alpha}{\alpha}& \log \bar q_n(\hat{\theta}_n)  - \frac{\log \a}{2\a} +\frac{\a-1}{2 \a} \log \frac{I(\theta_0)}{2 \pi } +
     \frac{\alpha -1}{2\alpha} \log {n}.
     \\
     \nonumber
     \leq &   \frac{\a-1}{2\a} \log (2\pi \bar e {\bar M} ) - \frac{\a-1}{2\a} \log n - \frac{\log \a}{2\a} +\frac{\a-1}{2 \a} \log \frac{I(\theta_0)}{2 \pi } +
     \frac{\alpha -1}{2\alpha} \log {n}
     \nonumber
     \\
     \nonumber
     =& \frac{\a-1}{2\a} \log (2\pi \bar e {\bar M} )  - \frac{\log \a}{2\a} +\frac{\a-1}{2 \a} \log \frac{I(\theta_0)}{2 \pi }
     \\
     =& \frac{\alpha-1}{\a} \frac{1}{2} \log \frac{\bar e \bar M I(\theta_0) }{\alpha^{\frac{1}{\a-1}}}.
     \label{eq:eq19a}
     \end{align}
     
     Observe that the left hand side in~\eqref{eq:eq17a} is always non-negative, implying the right hand side must be too for large $n$. Therefore, 
     the following inequality must hold for all $n\geq n_0$:
     \[ \frac{\bar e \bar M I(\theta_0)} {\alpha^{\frac{1}{\a-1}}}  \geq 1.  \] 
     Consequently, substituting~\eqref{eq:eq19a} into~\eqref{eq:eq17a}, we have 
     \begin{align}
     D_\a(\pi(\theta|\nX)\| \bar q_n(\theta)) &\leq\frac{1}{2} \log \frac{\bar e \bar M I(\theta_0) }{\alpha^{\frac{1}{\a-1}}} \blue{+ o_{P_{\theta_0}}(1)}\  \forall n\geq n_0,
     \label{eq:eq20a}
     \end{align}
     and the result follows.

 \end{proof}

\vspace{0.5em}
We next state an important inequality,  that is a direct consequence of H\"older's inequality. We use the following result in the proof of Lemma~\ref{thm:degen}.
\vspace{0.5em}
\begin{lemma}\label{lem:hold}
	For any set $K \subset \Theta$ and $\alpha>1$ and any sequence of distributions $\{q_n(\theta)\} \subset \cQ$, the following inequality holds true
	\begin{align}
	\int_{\Theta} q_n(\theta) \left( \frac{\pi(\theta|\nX)}{q_n(\theta)} \right)^{\alpha} d\theta \geq \frac{ \left( \int_{K} \pi(\theta|\nX) d\theta \right) ^{\alpha} }{ \left( \int_{K} q_n(\theta) d\theta \right) ^{\alpha-1} }.
	\label{eq:eqa12b}
	\end{align}
\end{lemma}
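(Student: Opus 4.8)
The plan is to reduce the inequality to a single application of Hölder's inequality after first discarding the part of the integral outside $K$. Since $\alpha > 1$, the integrand $q_n(\theta)^{1-\alpha}\,\pi(\theta|\nX)^{\alpha}$ on the left-hand side is nonnegative, so I would begin by restricting the domain of integration from $\Theta$ to $K$, giving
\[
\int_{\Theta} q_n(\theta)\left(\frac{\pi(\theta|\nX)}{q_n(\theta)}\right)^{\alpha} d\theta \;\geq\; \int_{K} q_n(\theta)^{1-\alpha}\,\pi(\theta|\nX)^{\alpha}\, d\theta .
\]
It then suffices to bound this restricted integral below by the claimed ratio.

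Next, I would set up Hölder's inequality on $K$ with the conjugate exponents $p = \alpha$ and $p' = \frac{\alpha}{\alpha-1}$ (which satisfy $\frac1p + \frac1{p'} = 1$ precisely because $\alpha > 1$). Writing the posterior mass on $K$ as the product
\[
\int_{K}\pi(\theta|\nX)\,d\theta = \int_{K} \left(\pi(\theta|\nX)\, q_n(\theta)^{-\frac{\alpha-1}{\alpha}}\right)\cdot q_n(\theta)^{\frac{\alpha-1}{\alpha}}\, d\theta ,
\]
Hölder's inequality yields
\[
\int_{K}\pi(\theta|\nX)\,d\theta \;\leq\; \left(\int_{K} \pi(\theta|\nX)^{\alpha}\, q_n(\theta)^{1-\alpha}\, d\theta\right)^{1/\alpha}\left(\int_{K} q_n(\theta)\, d\theta\right)^{\frac{\alpha-1}{\alpha}} ,
\]
where the two fractional powers of $q_n(\theta)$ recombine, under the exponents $p$ and $p'$, into $q_n^{1-\alpha}$ and $q_n$ respectively.

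Finally, I would raise both sides to the power $\alpha$ and rearrange, which turns this upper bound on $\int_K \pi(\theta|\nX)\,d\theta$ into the desired lower bound
\[
\int_{K} \pi(\theta|\nX)^{\alpha}\, q_n(\theta)^{1-\alpha}\, d\theta \;\geq\; \frac{\left(\int_{K}\pi(\theta|\nX)\,d\theta\right)^{\alpha}}{\left(\int_{K} q_n(\theta)\,d\theta\right)^{\alpha-1}} ,
\]
and chaining this with the restriction step completes the argument. There is no genuine obstacle here; the only thing to get right is the bookkeeping of the Hölder exponents so that the powers of $q_n(\theta)$ reassemble correctly, i.e.\ verifying that $\frac{\alpha-1}{\alpha}\cdot\frac{\alpha}{\alpha-1} = 1$ and $\frac{\alpha-1}{\alpha}\cdot\alpha = \alpha-1$, so that $\bigl(\pi\, q_n^{-(\alpha-1)/\alpha}\bigr)^{\alpha} = \pi^{\alpha} q_n^{1-\alpha}$ and $\bigl(q_n^{(\alpha-1)/\alpha}\bigr)^{\alpha/(\alpha-1)} = q_n$.
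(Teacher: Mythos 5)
Your proof is correct and is essentially identical to the paper's: both restrict the integral to $K$ by nonnegativity of the integrand and then apply H\"older's inequality with conjugate exponents $\alpha$ and $\tfrac{\alpha}{\alpha-1}$ to the same factorization $\pi(\theta|\nX) = \bigl(\pi(\theta|\nX)\,q_n(\theta)^{-(\alpha-1)/\alpha}\bigr)\cdot q_n(\theta)^{(\alpha-1)/\alpha}$, followed by raising to the power $\alpha$ and rearranging. No gaps; your exponent bookkeeping matches the paper's choice of $f$ and $g$ exactly.
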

\begin{proof}
	Fix a set $K \subset \Theta$. Since $\alpha>1$, using H\"older's inequality for $f(\theta) = \frac{\pi(\theta|\nX)}{q_n(\theta)^{1- \frac{1}{\alpha}}}$ and $g(\theta) = q_n(\theta)^{1- \frac{1}{\alpha}}$, 
	\begin{align*}
	\int_{K} \pi(\theta|\nX) d\theta &= \int_{K} f(\theta) g(\theta)  d\theta \\
	&\leq   \left( \int_{K} \frac{\pi(\theta|\nX)^{\alpha}}{q_n(\theta)^{\alpha-1}} d\theta \right) ^{\frac{1}{\alpha}} \left( \int_{K} q_n(\theta) d\theta \right) ^{1 - \frac{1}{\alpha}}.
	\end{align*}
	
	It is straightforward to observe from the above equation that,
	\begin{align*}
	\int_{K} \frac{\pi(\theta|\nX)^{\alpha}}{q_n(\theta)^{\alpha-1}} d\theta \geq \frac{ \left( \int_{K} \pi(\theta|\nX) d\theta \right) ^{\alpha} }{ \left( \int_{K} q_n(\theta) d\theta \right) ^{\alpha-1} }.
	\end{align*}
	
	Also note that, for any set $K$, the following inequality holds true, 
	\begin{align}
	\int_{\Theta} q_n(\theta) \left( \frac{\pi(\theta|\nX)}{q_n(\theta)} \right)^{\alpha} d\theta \geq \int_{K} \frac{\pi(\theta|\nX)^{\alpha}}{q_n(\theta)^{\alpha-1}} d\theta \geq \frac{ \left( \int_{K} \pi(\theta|\nX) d\theta \right) ^{\alpha} }{ \left( \int_{K} q_n(\theta) d\theta \right) ^{\alpha-1} },
	\label{eq:eqa12}
	\end{align}
	and the result follows immediately.
\end{proof}

\begin{proof}[\textbf{Proof of Lemma~\ref{thm:degen}}]
    First, we fix $n \geq 1$ and let $M_r$ be a sequence such that $M_r \to \infty $ as $r \to \infty.$ Recall that $\hat\theta_n$ is the maximum likelihood estimate and denote $\tilde{\theta}_n = \bbE_{q_n(\theta)}[\theta]$. Define a set 
    \[K_r := \{\theta \in \Theta : |\theta-\hat{\theta}_n| > M_r \} \bigcup \{\theta \in \Theta : |\theta-\tilde{\theta}_n| > M_r \}. \]
    Now, using Lemma~\ref{lem:hold} with $K = K_r$, we have
    \begin{align}
    \int_{\Theta} q_n(\theta) \left( \frac{\pi(\theta|\nX)}{q_n(\theta)} \right)^{\alpha} d\theta \geq \frac{ \left( \int_{K_r} \pi(\theta|\nX) d\theta \right) ^{\alpha} }{ \left( \int_{K_r} q_n(\theta) d\theta \right) ^{\alpha-1} }. 
    \label{eq:eq1}
    \end{align}
    
    Note that the left hand side in the above equation does not depend on $r$ and when $r \to \infty$ both the numerator and denominator on the right hand side converges to zero individually. For the ratio to diverge, however, we require the denominator to converge much faster than the numerator. To be more precise, observe that for a given $n$, 
    since $\alpha-1 < \alpha$ the tails of $q_n(\theta)$ must decay significantly faster than the tails of the true posterior for the right hand side in~\eqref{eq:eq1} to diverge as $r \to \infty$. 
    
    We next show that there exists an $n_0\geq 1$ such that for all $n \geq n_0$, the right hand side in~\eqref{eq:eq1} diverges as $r \to \infty$. 
    Since the posterior distribution satisfies the Bernstein-von Mises Theorem \citep{vdV00}, we have
    %
    \[  \int_{K_r} \pi(\theta|\nX) d\theta  =   \int_{K_r} \mathcal{N}(\theta;\hat \theta_n, (n I(\theta_0))^{-1}) d\theta + o_{P_{\theta_0}}(1). \]  
    Observe that the numerator on the right hand side of~\eqref{eq:eq1} satisfies,
    \begin{align}
    \nonumber
    \bigg( \int_{K_r}  & \pi(\theta|\nX) d\theta \bigg) ^{\alpha} 
    \nonumber
    =  \left( \int_{K_r} \mathcal{N}(\theta;\hat\theta_n, (n I(\theta_0))^{-1}) d\theta + o_{P_{\theta_0}}(1) \right) ^{\alpha}
    \\
    \nonumber
    &\geq \left( \int_{ \{ |\theta-\hat{\theta}_n| > M_r\}} \mathcal{N}(\theta;\hat\theta_n, (n I(\theta_0))^{-1}) d\theta + o_{P_{\theta_0}}(1) \right) ^{\alpha}
    \\
    \nonumber
    &= \left( \int_{ \{ \theta-\hat{\theta}_n > M_r \} } \mathcal{N}(\theta;\hat\theta_n, (n I(\theta_0))^{-1}) d\theta + \int_{ \{\theta-\hat{\theta}_n \leq - M_r \} } \mathcal{N}(\theta;\hat\theta_n, (n I(\theta_0))^{-1}) d\theta + o_{P_{\theta_0}}(1) \right) ^{\alpha}
    \\
    & \geq \left( \int_{ \{ \theta-\hat{\theta}_n > M_r \}} \mathcal{N}(\theta;\hat\theta_n, (n I(\theta_0))^{-1}) d\theta  + o_{P_{\theta_0}}(1) \right) ^{\alpha}.
    \end{align}
    Now, using the lower bound on the Gaussian tail distributions from~\cite{feller1968}
    \begin{align}
    \nonumber
    \left( \int_{K_r} \pi(\theta|\nX) d\theta \right) ^{\alpha} &=  \left( \int_{K_r} \mathcal{N}(\theta;\hat\theta_n, (n I(\theta_0))^{-1}) d\theta + o_{P_{\theta_0}}(1) \right) ^{\alpha}
    \\
    \nonumber
    & \geq \left( \frac{1}{\sqrt{2\pi}}\left( \frac{1}{\sqrt{n I(\theta_0)} M_r } - \frac{1}{(\sqrt{n I(\theta_0)} M_r)^3} \right) e^{-\frac{n I(\theta_0)}{2} M_r^2 } + o_{P_{\theta_0}}(1) \right) ^{\alpha}
    \\
    &\sim \left( \frac{1}{\sqrt{2\pi}} \frac{1}{\sqrt{n I(\theta_0)} M_r }  e^{-\frac{n I(\theta_0)}{2} M_r^2 } + o_{P_{\theta_0}}(1) \right) ^{\alpha},
    \label{eq:eq2}
    \end{align}
    where the last approximation follows from the fact that, for large $r$, \[ \left( \frac{1}{\sqrt{n I(\theta_0)} M_r } - \frac{1}{(\sqrt{n I(\theta_0)} M_r)^3} \right)  \sim \frac{1}{\sqrt{n I(\theta_0)} M_r }. \]
    
    Next, consider the denominator on the right hand side of~\eqref{eq:eq1}. Using the union bound
    \begin{align}
    \left( \int_{K_r} q_n(\theta) d\theta \right) ^{\alpha-1} \leq \left( \int_{ \{|\theta-\tilde{\theta}_n| > M_r \}} q_n(\theta) d\theta  + \int_{ \{|\theta-\hat{\theta}_n| > M_r \} } q_n(\theta) d\theta  \right) ^{\alpha-1}.
    \label{eq:eq5}
    \end{align}
    
    Since, $\tilde{\theta}_n$ and $\hat{\theta}_n$ are finite for all $n\geq 1$, there exists an $\e>0$ such that for large $n$,
    \( |\tilde\theta_n - \hat\theta_n| \leq \e. \) Applying the triangle inequality, 
    \[ |\theta - \hat\theta_n| \leq | \theta - \tilde\theta_n| + | \tilde\theta_n - \hat\theta_n| \leq  | \theta - \tilde\theta_n| + \e. \]
    
    Therefore, $\{|\theta-\hat{\theta}_n| > M_r \} \subseteq \{|\theta-\tilde{\theta}_n| > M_r - \e \} $ and it follows from~\eqref{eq:eq5} that
    \begin{align}
    \nonumber
    \left( \int_{K_r} q_n(\theta) d\theta \right) ^{\alpha-1} 
    & \leq \left( \int_{ \{|\theta-\tilde{\theta}_n| > M_r \}} q_n(\theta) d\theta  + \int_{ \{|\theta-\tilde{\theta}_n| > M_r - \e \} } q_n(\theta) d\theta  \right) ^{\alpha-1}.
    \end{align}
    
    Next, using the sub-Gaussian tail distribution bound from \cite[Theorem 2.1]{BoLuMa2013}, 
    \begin{align}
    \left( \int_{ \{|\theta-\tilde{\theta}_n| > M_r \}} q_n(\theta) d\theta  + \int_{ \{|\theta-\tilde{\theta}_n| > M_r - \e \} } q_n(\theta) d\theta  \right) ^{\alpha-1} \leq \left( 2 e^{-\frac{\gamma^2_n M_r^2}{2B}} + 2 e^{-\frac{\gamma^2_n (M_r-\e)^2}{2B}}  \right) ^{\alpha-1}.
    \label{eq:eq6}
    \end{align}
    
    For large $r$, \(M_r \sim M_r-\e \), and it follows that
    \begin{align}
    \left( \int_{ \{|\theta-\tilde{\theta}_n| > M_r \}} q_n(\theta) d\theta  + \int_{ \{|\theta-\tilde{\theta}_n| > M_r - \e \} } q_n(\theta) d\theta  \right) ^{\alpha-1} \lesssim \left( 4 e^{-\frac{\gamma^2_n M_r^2}{2B}}  \right) ^{\alpha-1}.
    \label{eq:eq3}
    \end{align}
    Substituting~\eqref{eq:eq2} and~\eqref{eq:eq3} into~\eqref{eq:eq1}, we obtain
    \begin{align*}
    \int_{\Theta} q_n(\theta) \left( \frac{\pi(\theta|\nX)}{q_n(\theta)} \right)^{\alpha} d\theta \gtrsim \left( \frac{ \frac{1}{\sqrt{2\pi}} \frac{1}{\sqrt{n I(\theta_0)} M_r }  e^{-\frac{n I(\theta_0)}{2} M_r^2 }   + o_{P_{\theta_0}}(1)  } {\left( 4 e^{-\frac{\gamma^2_n M_r^2}{2B}}  \right) ^\frac{\alpha-1}{\alpha}}  \right) ^{\alpha},
    \label{eq:eq4}
    \end{align*}
    for large $r$. Observe that
    \begin{align}
    \frac{ \frac{1}{\sqrt{2\pi}} \frac{1}{\sqrt{n I(\theta_0)} M_r }  e^{-\frac{n I(\theta_0)}{2} M_r^2 }    } {\left( 4 e^{-\frac{\gamma^2_n M_r^2}{2B}}  \right) ^\frac{\alpha-1}{\alpha}}  &=  \frac{1}{4^\frac{\alpha-1}{\alpha} \sqrt{2\pi}} \frac{1}{ M_r }  \left( \frac{1}{\sqrt{n I(\theta_0)} }e^{M_r^2\left( \frac{\alpha-1}{\alpha}\frac{\gamma^2_n }{2B}  - \frac{n I(\theta_0)}{2} \right)  } \right).
    \end{align}
    
    Since $\gamma_n^2 > n$, choosing $n_0 = \min \left\{n : \left( \frac{\alpha-1}{\alpha}\frac{\gamma^2_n }{2B}  - \frac{n I(\theta_0)}{2} \right) > 0 \right\}$ implies that for all $n \geq n_0$, as $r \to \infty$, the left hand side in~\eqref{eq:eq4} diverges and the result follows.
    
\end{proof}

\subsection{Proofs in Section~\ref{sec:asymptoteEP}}

\begin{proof}[\textbf{Proof of Lemma~\ref{lem:subopt1ep}}]
    \citet[Theorem 1]{PosnerE.1975Rcsf} shows that, the \scKL\ divergence  $\scKL(p(\theta)\| s(\theta))$ is a lower semi-continuous function of the pair $(p(\theta),s(\theta))$ in the weak topology on  the space of probability measures. Recall from~\eqref{eq:4} that the true posterior distribution $\pi(\theta|\nX)$ converges weakly to $\delta_{\theta_0}$, $P_{\theta_0}-a.s$. Using this fact it follows that
    \begin{align*} \liminf_{n \to \infty} \scKL(\pi(\theta|\nX) \| s_n(\theta)) \geq \scKL \left(\delta_{\theta_0} \| s(\theta) \right)\quad P_{\theta_0}-a.s.
    \end{align*}
    Next, using Pinsker's inequality~\cite{CoverTM2006} for  $\alpha > 1$, we have
    \begin{align*} 
    \scKL \left(\delta_{\theta_0} \| s(\theta) \right) &\geq \frac{1}{2} \left( \int_{\Theta} \left | \delta_{\theta_0} - s(\theta) \right| d\theta \right)^2.
    \end{align*}
    Now, fixing $\epsilon > 0$ such that $s(\theta)$ has positive density in the complement of the ball of radius $\epsilon$ centered at $\theta_0$, $B(\theta_0,\epsilon)^C$, we have  
    \begin{align}
    \nonumber
    \liminf_{n \to \infty} \scKL(\pi(\theta|\nX) \| s_n(\theta)) 
    &\geq \frac{1}{2} \left( \int_{B(\theta_0,\epsilon)}  \left | \delta_{\theta_0} - s(\theta) \right| d\theta + \int_{B(\theta_0,\epsilon)^C} \left | \delta_{\theta_0} - s(\theta) \right | d\theta \right)^2
    \\
    \nonumber
    &\geq \frac{1}{2} \left( \int_{B(\theta_0,\epsilon)^C} \left | \delta_{\theta_0} - s(\theta) \right | d\theta \right)^2
    \\
    &=  \frac{1}{2} \left( \int_{B(\theta_0,\epsilon)^C} \left |  - s(\theta) \right | d\theta \right)^2 \quad P_{\theta_0}-a.s.
    \end{align}
    Since $s(\theta)$ has positive density in the set $B(\theta_0,\epsilon)^C$, there exists $\eta(\epsilon)>0$, such that 
    \[\frac{1}{2} \left( \int_{B(\theta_0,\epsilon)^C} \left |  - s(\theta) \right | d\theta \right)^2 \geq \eta(\epsilon), \] 
    completing the proof.
\end{proof}

\subsection{Proofs in Section~\ref{sec:latent}}
\begin{proof}[\textbf{Proof of Lemma~\ref{lem:LAN}}]
    We prove the assertion of the Lemma for the class of local latent parameters $z_i$ that have discrete and finite support.
    First observe that for $\alpha>1$, using Jensen's inequality
    \begin{align}
    M(\nX|\theta)^{\alpha} = \min_{q(\nZ)\in \cQ^n} \int_{ \sZ^n} q(\nZ) \left(\frac{p( \nZ, \mathbf X_n|\theta)}{q(\nZ) } \right)^{\alpha}  d\nZ \geq \left[\int_{ \sZ^n} p( \nZ, \mathbf X_n|\theta)   d\nZ \right]^{\alpha}.
    \label{eq:lb}
    \end{align}
    Now since family $\cQ^n$ contains point masses, we choose a member of family $\cQ^n$ which is a joint distribution of point masses at $\nZ^p:=\{z_1^p,z_2^p,\ldots,z_n^p\}$ to obtain
    \begin{align}
    M(\nX|\theta)^{\alpha} = \min_{q(\nZ)\in \cQ^n} \int_{ \sZ^n} q(\nZ) \left(\frac{p( \nZ, \mathbf X_n|\theta)}{q(\nZ) } \right)^{\alpha}  d\nZ \leq   \left[p( \nZ^p, \mathbf X_n|\theta)  \right]^{\alpha},
    \label{eq:ub}
    \end{align}
    where $\nZ^p$ is as defined in Assumption~\ref{assume:clp}.
    
    Since, $f(x)=x^\alpha$ is increasing for $\alpha>1$ and $x>0$, it follows from~\eqref{eq:lb},~\eqref{eq:ub}, and monotonicity of the logarithm function that
    \begin{align}
    \log \int_{ \sZ^n} p( \nZ, \mathbf X_n|\theta)   d\nZ  \leq \log M(\nX|\theta)  \leq \log p( \nZ^p, \mathbf X_n|\theta). 
    \label{eq:ul}
    \end{align}
    
    Now using Assumption~\ref{assume:clp}~(1) and~(2(ii)), that is $d_{H}(z_0,\nZ^p)= o(\rho_n)$, it follows that 
    at some rate $\rho_n$ with $\rho_n \downarrow 0 $ and $n\rho_n^2 \to \infty$; that is for all bounded, stochastic $h_n= O_{P_{0}}(1) $, 
    \begin{align*} 
    \int_{\{\nZ: d_{H}(\nZ,z_0) \geq \rho_n  \}}& p(z_{1:n}|\nX,\theta = \theta_0+ n^{-1/2}h_n ) d\nZ \\
    \leq &\int_{\{\nZ: d_{H}(\nZ,\nZ^p) + d_{H}(z_0,\nZ^p) \geq \rho_n  \}} p(z_{1:n}|\nX,\theta = \theta_0+ n^{-1/2}h_n ) d\nZ
    \\
     \leq &\int_{\{\nZ: d_{H}(\nZ,\nZ^p)  \geq \rho_n(1-\e)  \}} p(z_{1:n}|\nX,\theta = \theta_0+ n^{-1/2}h_n ) d\nZ = o_{P_{0}}(1),
    \end{align*} 
    where the first inequality follows from using the fact that $d_{H}(\nZ,z_0) \leq d_{H}(\nZ,\nZ^p) + d_{H}(z_0,\nZ^p)$, the second inequality uses the fact that $d_{H}(z_0,\nZ^p)= o(\rho_n)$, that is for some $\e\in (0,1)$, $d_{H}(z_0,\nZ^p)<\e \rho_n $ for sufficiently large $n$, and the last inequality is due to Assumption~\ref{assume:clp}~(1).
   
    Therefore, it can be observed from the above result that  the conditioned latent posterior $p(z_{1:n}|\nX,\theta_0)$ concentrates at $z_0$. Consequently, when the local latent parameters are discrete it follows that
    \begin{align*}
    \log \int_{ \sZ^n} p( \nZ, \mathbf X_n|\theta_0)   d\nZ  = \log \int_{ \sZ^n} \frac{p(z_{1:n}|\nX,\theta_0)}{p(z_{1:n}|\nX,\theta_0)} p( \nZ, \mathbf X_n|\theta_0)   d\nZ =\log  p( z_0, \mathbf X_n|\theta_0) + o_{P_0}(1).
    \end{align*}
    Now it  follows that
    \begin{align} \log M(\nX|\theta_0)  =  \log p( z_0, \mathbf X_n|\theta_0) + o_{P_0}(1)= \log \int_{ \sZ^n} p( \nZ, \mathbf X_n|\theta_0)   d\nZ  + o_{P_0}(1).
    \label{eq:ap} 
    \end{align}
    
    Subtracting  $\log M(\nX|\theta_0)$ from~\eqref{eq:ul} and using~\eqref{eq:ap} yields
    \begin{align}
    \log \frac{\int_{ \sZ^n} p( \nZ, \mathbf X_n|\theta)   d\nZ }{\int_{ \sZ^n} p( \nZ, \mathbf X_n|\theta_0)   d\nZ } +o_{P_0}(1) \leq \log \frac{M(\nX|\theta)}{M(\nX|\theta_0)} \leq \log \frac{p( z_0, \mathbf X_n|\theta)}{p( z_0, \mathbf X_n|\theta_0)} + o_{P_0}(1).
    \end{align}
    Now, substituting $\theta= \theta_0 + n^{-1/2}h_n$ for all bounded and stochastic $h_n= O_{P_{0}}(1) $, and using the result in~\citet[Theorem 4.2]{bickel2012semiparametric} under the conditions in Assumption~\ref{assume:clp} the RHS and LHS above have the same LAN expansion and the result follows. Notice that, by definition, the s-LAN condition in~Assumption~\ref{assume:lan} is also true at $\nZ=\nZ^p$. Assumption~\ref{assume:clp}~(2(ii)) implies $d_{H}(z_0,\nZ^p)= o(\rho_n) $ with $\rho_n \downarrow 0 $ and $n\rho_n^2 \to \infty$, so that 
    \[  \log \frac{P^n_{\theta_0 + n^{-1/2} h_n,\nZ^p}}{P^n_{\theta_0 ,\nZ^p}} = \log \frac{P^n_{\theta_0 + n^{-1/2} h_n,z_0}}{P^n_{\theta_0 ,z_0}} + o(1).\]
Therefore, $\log \frac{p( z_0, \mathbf X_n|\theta_0 + n^{-1/2} h_n)}{p( z_0, \mathbf X_n|\theta_0)} = \log \frac{p(  \mathbf X_n|z_0,\theta_0 + n^{-1/2} h_n)}{p(  \mathbf X_n|z_0,\theta_0)} + \log \frac{p(z_0|\theta_0 + n^{-1/2} h_n)}{p(z_0|\theta_0)} = \log \frac{P^n_{\theta_0 + n^{-1/2} h_n,z_0}}{P^n_{\theta_0 ,z_0}}+ o(1)$ also have the same expansion as given in the s-LAN condition in~Assumption~\ref{assume:lan}. 
\end{proof}

\begin{proof}[\textbf{Proof of Proposition~\ref{prop:UBfinL}}]
    Observe that for any good sequence $\{\bar q_n(\theta)\}$ and  $q(z_{1:n})$ as point masses (discrete distribution) at the truth $\nZ^0:=\{z_1^0,z_2^0,\ldots,z_n^0\}$, we have
    \begin{align}
    \nonumber
    \min_{q \in \mathcal Q} \min_{q(z_{1:n})\in \cQ^n }  &D_{\alpha}(\pi(\theta,z_{1:n}|\nX) \|q(\theta)q(z_{1:n})) 
    \\ 
    \nonumber
    &=    \min_{q(\theta)\in \bar \cQ, q(\nZ)\in \cQ^n} \frac{1}{\alpha-1}
    \log \int_{\Theta \times \sZ^n} q(\theta)q(\nZ)   \left(\frac{p(\theta, \nZ, \mathbf X_n)}{p(\nX)q(\theta)q(\nZ) } \right)^{\alpha} d\theta d\nZ
    \\ 
    \nonumber
    &
    \leq     \frac{1}{\alpha-1}
    \log \int_{\Theta } \bar q_n(\theta)  \left(\frac{p(\theta, \nZ^0, \mathbf X_n)}{p(\nX)\bar q_n(\theta) } \right)^{\alpha} d\theta 
    \\
    &
    \leq     \frac{1}{\alpha-1}
    \log \int_{\Theta } \bar q_n(\theta)  \left(\frac{\pi(\theta, \nZ^0| \mathbf X_n)}{\bar q_n(\theta) } \right)^{\alpha} d\theta . 
    \label{eq:eqb1}
    \end{align} 
    
    Also note that, using the definition of $\pi(\theta, \nZ^0| \mathbf X_n)$, we have
    \begin{align}
    \pi(\theta, \nZ^0| \mathbf X_n) = \frac{\pi(\theta){\pi(\nZ^0|\theta)}p(\mathbf X_n|\theta,\nZ^0)}{\int_{\Theta \times \sZ^n} \pi(\theta){\pi(\nZ|\theta)}p(\mathbf X_n|\theta,\nZ) d\theta d\nZ} \leq \frac{\pi(\theta){\pi(\nZ^0|\theta)}p(\mathbf X_n|\theta,\nZ^0)}{\int_{\Theta} \pi(\theta){\pi(\nZ^0|\theta)}p(\mathbf X_n|\theta,\nZ^0) d\theta },
    \label{eq:eqb2}
    \end{align}
    where the second inequality follows from the fact that $\nZ$ is a discrete random variable. Therefore substituting~\eqref{eq:eqb2} into~\eqref{eq:eqb1} yields
    \begin{align}
    \nonumber
    \min_{q \in \mathcal Q} \min_{q(z_{1:n})\in \cQ^n }  D_{\alpha}(\pi(\theta,z_{1:n}|\nX) \|q(\theta)q(z_{1:n})) &\leq \frac{1}{\alpha-1}
    \log \int_{\Theta } \bar q_n(\theta)  \left(\frac{\pi(\theta)p(\mathbf X_n,\nZ^0|\theta)}{\bar q_n(\theta) \int_{\Theta} \pi(\theta)p(\mathbf X_n,\nZ^0|\theta) d\theta } \right)^{\alpha} d\theta 
    \\
    \nonumber
    &= \frac{1}{\alpha-1}
    \log \int_{\Theta } \bar q_n(\theta)  \left(\frac{\pi(\theta|\nX,\nZ^0)}{\bar q_n(\theta) } \right)^{\alpha} d\theta 
    \\
    &=: D_{\alpha}(\pi(\theta|\nX,\nZ^0)\|\bar{q}_n(\theta)).  
    \end{align} 
    Therefore, for the second part, it suffices to show that \[  D_{\alpha}(\pi(\theta|\nX,\nZ^0)\|\bar{q}_n(\theta)) < B + o_{P_0}(1).\] 
    
    The subsequent arguments in the proof are for any $n \geq \max(n_1,n_2,n_3,n_M)$, where $n_1,n_2$, and $n_3$ are defined in Assumption~\ref{def:gsequence}. First
    observe that, for any compact ball $K$ containing the true parameter $\theta_0$,
    \begin{align}
    \nonumber
    \frac{\alpha-1}{\a}&D_\a(\pi(\theta|\nX,\nZ^0)\| \bar q_n(\theta)) 
    \\ 
    &= \frac{1}{\alpha} \log \left  ( \int_{K} \bar q_n(\theta) \left ( \frac{\pi(\theta|\nX,\nZ^0)}{\bar q_n(\theta)} \right)^{\alpha} d \theta + \int_{\Theta \backslash K} \bar q_n(\theta) \left ( \frac{\pi(\theta|\nX,\nZ^0)}{ \bar q_n(\theta)} \right)^{\alpha} d \theta \right).
    \label{eq:eqb11a}
    \end{align}
    
    
    First, we approximate the first integral on the right hand side using the LAN condition in Assumption~\ref{assume:lan}. \textcolor{black}{Let
    $\Delta_{n,(\theta_0,z_0)} := \sqrt{n}(\hat{\theta}_n - \theta_0)$, where $\hat{\theta}_n  \to \theta_0$, $P_0 -a.s.$ and $\Delta_{n,(\theta_0,z_0)}$ converges in distribution to $\mathcal{N}(0,I(\theta_0,z_0)^{-1})$}~\citep[Lemma 25.23 and 25.25]{vdV00}. Now the proof follows similar steps as used in the proof of Proposition~\ref{prop:UBfin}. 
\end{proof}



\bibliographystyle{plainnat}
\bibliography{refs}
\end{document}